\newcommand{\N}{\mathbb{N}}
\newcommand{\Z}{\mathbb{Z}}
\newcommand{\R}{\mathbb{R}}
\newcommand{\C}{\mathbb{C}}
\newcommand{\StirCycle}[2]{ \genfrac{[}{]}{0pt}{}{#1}{#2} }
\newcommand{\StirSubSet}[2]{ \genfrac{\{}{\}}{0pt}{}{#1}{#2}_{\negthickspace\geq2} }
\newcommand{\EulerianTwo}[2]{ \left\langle \!\!\! \genfrac{ \langle}{\rangle}{0pt}{}{#1}{#2}\!\!\! \right\rangle }
\newtheorem{Th}{Theorem}
\newtheorem{Lem}{Lemma}
\newtheorem{corollary}[Th]{Corollary}
\newtheorem{remark}[Th]{Remark}
\numberwithin{equation}{section}
\begin{document}
\title{Convergence in $\C$ of series \\
       for the Lambert $W$ Function}

\author{G.~A. Kalugin and D.~J. Jeffrey\\
Department of Applied Mathematics \\
The University of Western Ontario, London, Canada}
\maketitle
\begin{abstract}
We study some series expansions for the Lambert $W$ function. We show that known asymptotic series converge in both real and complex domains.
We establish the precise domains of convergence and other properties of the series, including asymptotic expressions for the expansion coefficients. We introduce an new invariant transformation of the series.
The transformation contains a parameter whose effect on the domain and rate of convergence is studied theoretically and numerically.
We also give alternate representations of the expansion coefficients, which imply a number of combinatorial identities.
\end{abstract}

\section{Introduction}
The equation $y^\alpha e^y = x$ was solved by de Bruijn and by Comtet \cite{Bruijn, Comtet}, as an asymptotic expansion:
\begin{equation}\label{eq:opening}
y = \Phi_\alpha(x)=\ln x-\alpha\ln\ln x+ \alpha u = \alpha\left(\frac{1-\tau}{\sigma}+u\right)
\ ,  \end{equation}
where
\begin{equation}\label{eq:st}
\sigma=\frac{\alpha}{\ln x}\ ,\quad \tau=\alpha\,\frac{\ln\ln x}{\ln x}\ ,
\end{equation}
and
\begin{equation} \label{eq:Comtet}
u(\sigma,\tau) =\sum_{n=1}^\infty \sum_{m=1}^n (-1)^{n-m}
\StirCycle{n}{n-m+1} \frac{\sigma^{n-m} \tau^{m}}{m!}\ ,
\end{equation}
and where $\StirCycle{n}{n-m+1}$ denotes a Stirling cycle
number~\cite{{Graham et. al},{Corless1997seqseries}}.
The function $u$ obeys the fundamental relation
\begin{equation}\label{eq:basic}
1-e^{-u}+\sigma u-\tau=0\ .
\end{equation}
By rewriting \eqref{eq:basic} in terms of $\zeta=1/(1+\sigma)$,
a second series for $u$ was obtained by~\cite{CRpaper} in terms
of 2-associated Stirling partition numbers~\cite{{Graham et. al}},
also called Stirling subset numbers or Stirling numbers of the 2nd kind.
\begin{equation} \label{eq:improvedStir}
u=\sum_{m=1}^\infty  \sum_{p=0}^{m-1}
\StirSubSet{p+m-1}{p} {(-1)^{p+m-1}} \frac{\zeta^{p+m}\tau^m}{m!}\ .
\end{equation}

Since $y^\alpha e^y=x$ can be solved in terms of the Lambert $W$ function, as
$y=\alpha W(x^{1/\alpha}/\alpha)$, the series solutions above are also series for 
the principal branch of $W$.
We recall that branches are denoted $W_k$, and of these only $W_0$ and $W_{-1}$ take real values, obeying the inequalities~\cite{Corless1996Lam}
\begin{align*}
&-1\le W_0(x) < \infty \quad \mbox{for} \quad -1/e \le x <\infty\ , \\
&-\infty < W_{-1}(x) <-1 \quad \mbox{for} \quad -1/e < x < 0\ .
\end{align*}
Here, we are concerned only with the principal branch $k=0$,
although some discussion of series expansions for branch $k=-1$ is given in section \ref{sec:minus1}.


The convergence properties of the asymptotic series \eqref{eq:Comtet} and \eqref{eq:improvedStir} were first studied in the real case in \cite{CRpaper}, and bounds
on the domains of convergence obtained.
Here we establish the precise domains of convergence of the series both on the real line and
in the complex plane. We analyze the differences in the properties of the series and find asymptotic expressions for the expansion coefficients in \eqref{eq:improvedStir}.
We also consider invariant transformations of the above series.
The transformations contain a parameter $p$ (related to $\alpha$ above) and retain the basic series structure while $p$ varies.
The parameter changes the convergence domains of the series as well as
the rates of convergence.

A third series is studied, derived from one for the Wright $\omega$
function~\cite{Wright function}.
\begin{equation} \label{eq:wEuler}
W(x)=\omega_0 + \sum_{m=1}^\infty \frac{1}{m! \sigma^m (1+\omega_0)^{2m-1}}
\sum_{k=0}^{m-1} \EulerianTwo{m-1}{k} (-1)^k \omega_0^{k+1}\ ,
\end{equation}
where $\sigma=1/\ln x$  and $\omega_0$ denotes the Omega constant $W(1)=0.56714329..$.
The notation and definition of the second-order Eulerian numbers
may be found in \cite{{Graham et. al},{Corless1997seqseries}}.
We give three new representations of the expansion coefficients of
this series as well as their asymptotic estimates.
It is also shown that the series \eqref{eq:improvedStir} can also be expressed using the second-order Eulerian numbers. Some combinatorial identities follow from the different forms
for coefficients, including the Carlitz-Riordan identities.

\section{First series expansion \eqref{eq:Comtet}}
It is convenient to consider separately the case in which $x>1$, making $\sigma$, $\tau$ and $u$ all real. 
This was the case studied in~\cite{CRpaper}, where bounds on the convergence domain 
were given, and is the subject of the next subsection.
\subsection{Convergence for real values} \label{sec:ComtetReal}
We begin by stating a convergence criterion in terms of the variables $\sigma$ and $\tau$.

\begin{Th} \label{Th:ConvComtetSigmaTau}
The domain of convergence of the series \eqref{eq:Comtet} is defined by the inequality
\begin{equation} \label{eq:ConvComtetSigmaTau}
\ln\sigma<1-\frac{\tau}{\sigma}+\Re W_{-1}\left(-e^{\frac{\tau}{\sigma}-1}\right)\ .
\end{equation}
\end{Th}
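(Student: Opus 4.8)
The plan is to freeze the ratio $t=\tau/\sigma$, treat \eqref{eq:Comtet} as an ordinary power series in $\sigma$, and read off its radius of convergence from the singularities of the inverse of an explicit elementary function. Putting $\tau=\sigma t$ and collecting the terms of \eqref{eq:Comtet} by total degree turns it into $u=\sum_{n\ge1}a_n(t)\,\sigma^n$ with $a_n(t)=\sum_{m=1}^{n}(-1)^{n-m}\StirCycle{n}{n-m+1}\frac{t^{m}}{m!}$; the elementary majorant $|a_n(t)|\le\sum_{j+m=n}\StirCycle{n}{j+1}\frac{|t|^{m}}{m!}$, read against the absolutely convergent double series, shows that convergence of \eqref{eq:Comtet} at $(\sigma,\tau)$ is governed by the condition $\sigma<\rho(t)$, where $\rho(t)=1/\limsup_{n}|a_n(t)|^{1/n}$ is the radius of convergence of $\sum_n a_n(t)\sigma^n$.

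Next I would identify $u(\sigma)$ analytically. Substituting $\tau=\sigma t$ into \eqref{eq:basic} gives $1-e^{-u}+\sigma(u-t)=0$, that is $\sigma=g(u):=\frac{1-e^{-u}}{t-u}$, with $g(0)=0$ and $g'(0)=1/t\ne0$ for $t\ne0$, so $u(\sigma)$ is the reversion of $g$. The singularities of a reversion series on its circle of convergence are branch points lying over the critical values $g(u^{\ast})$ with $g'(u^{\ast})=0$, so $\rho(t)$ is the modulus of the critical value nearest the origin. Here $g'(u)=0$ is equivalent to $e^{-u}(u+1-t)=1$, which by the definition of $W$ has solutions $u^{\ast}=t-1-W(-e^{\,t-1})$; feeding this back and using $W\,e^{W}=-e^{\,t-1}$ collapses the critical value to the simple form $g(u^{\ast})=1/W(-e^{\,t-1})$.

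It then remains to pick the branch and to pass to the stated inequality. Among the admissible critical values the one closest to the origin is $1/W_{-1}(-e^{\,t-1})$: for $t\le0$ both $W_0(-e^{\,t-1})\in[-1,0)$ and $W_{-1}(-e^{\,t-1})\in(-\infty,-1]$ are real and $|1/W_{-1}(-e^{\,t-1})|\le1\le|1/W_0(-e^{\,t-1})|$, while for $t>0$ the two values are complex conjugates, of equal modulus and equal real part. Hence $\rho(t)=|1/W_{-1}(-e^{\,t-1})|$. Taking logarithms and using $\ln|z|=\ln|W(z)|+\Re W(z)$, immediate from $z=W(z)e^{W(z)}$, with $z=-e^{\,t-1}$ gives $\ln\rho(t)=\Re W_{-1}(-e^{\,t-1})-(t-1)=1-t+\Re W_{-1}(-e^{\,t-1})$, so $\sigma<\rho(t)$ is precisely \eqref{eq:ConvComtetSigmaTau} once $t=\tau/\sigma$ is restored.

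The crux, and the step I expect to cost the most work, is the \emph{dominance} claim: that $u(\sigma)$ really is analytic on the whole disc $|\sigma|<|1/W_{-1}(-e^{\,t-1})|$ and singular on its boundary, with no nearer singularity coming from another critical point, from the pole of $g$ at $u=t$, or from the behaviour of $g$ at infinity. For $t<0$ this is clean: the coefficients obey $(-1)^{n}a_n(t)>0$, so by Pringsheim's theorem the dominant singularity of $u(\sigma)$ lies on the negative real axis, exactly where $1/W_{-1}(-e^{\,t-1})$ sits, and a Rouch\'e/monodromy argument on the level curves $g(u)=\sigma$ upgrades this to analyticity on the full disc. The degenerate case $t=0$ (where $u\equiv0$) and the range $t>0$ then follow by continuity and analytic continuation of both $u$ and the critical value $1/W_{-1}(-e^{\,t-1})$ in the parameter $t$.
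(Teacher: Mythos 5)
Your reduction is essentially the paper's: fix $\lambda=\tau/\sigma$, regard \eqref{eq:Comtet} as a power series in $\sigma$, locate the critical points of $1-e^{-u}+\sigma(u-\lambda)=0$, obtain the candidate singular values $\sigma=1/W_m(-e^{\lambda-1})$, and convert the modulus of the $W_{-1}$ value into \eqref{eq:ConvComtetSigmaTau} via $We^{W}=-e^{\lambda-1}$; all of these computations are correct and agree with the paper. The genuine gap is the branch-exclusion step. The equation $g'(u)=0$ has critical points $u^\ast_m=t-1-W_m(-e^{t-1})$ for \emph{every} $m\in\Z$, with critical values $1/W_m(-e^{t-1})$, and since $|W_m(-e^{t-1})|\to\infty$ as $|m|\to\infty$ these critical values accumulate at $\sigma=0$. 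Hence your governing principle, that $\rho(t)$ is the modulus of the critical value nearest the origin, is false as stated: taken literally it would give $\rho(t)=0$. What must actually be proved is that the critical points with $m\neq 0,-1$ are never reached by the function element $u(\sigma)$ with $u(0)=0$, so they are not singularities of it. This is precisely the step the paper supplies (tersely): from $\lambda-u-1=1/\sigma$ one gets $\Im(1/\sigma)=-\Im u$, and $\Im u\in(-\pi,\pi)$ for the principal branch, which restricts the admissible branches to $m\in\{-1,0\}$; then $\Re W_{-1}\le\Re W_0$ yields \eqref{eq:ConvComtetSigmaTau}. You use the word ``admissible'' but never justify why only $W_0$ and $W_{-1}$ qualify.

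You do flag this as the crux, and your Pringsheim argument genuinely closes it for $t<0$: there $(-1)^n a_n(t)>0$, the dominant singularity lies on the negative real axis, and continuation along that axis stops exactly at $1/W_{-1}(-e^{t-1})$ — if executed, this is arguably more careful than the paper at that point. But for $t>0$, which is the case needed for the main conclusions (e.g.\ $x>e$ corresponds to $\lambda>0$), ``continuity and analytic continuation in the parameter $t$'' is not an argument: the radius of convergence of $u_t(\sigma)$ need not vary continuously with $t$, and for $t>0$ the two nearest admissible critical values form a conjugate pair off the real axis, so no Pringsheim-type statement is available. You still must show directly that $u(\sigma)$ continues analytically throughout $|\sigma|<|1/W_{-1}(-e^{t-1})|$, i.e.\ that none of the critical values $1/W_m(-e^{t-1})$ with $m\neq 0,-1$ lying inside that disc is attained by this branch. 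Supplying the paper's $\Im u\in(-\pi,\pi)$ restriction, or an equivalent monodromy argument on the level sets of $g$, is what is needed to complete your proof.
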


\begin{proof}
We rewrite the fundamental relation (\ref{eq:basic}) by
introducing $\lambda=\tau/\sigma$ to play the role of a parameter.
\begin{equation} \label{eq:G(u)}
G_\lambda(\sigma,u) = 1 - e^{-u} + \sigma u - \sigma\lambda=0\ .
\end{equation}
By the implicit function theorem \cite{Markushevich}, for fixed $\lambda\in\R$, equation \eqref{eq:G(u)} determines a function
\begin{equation} \label{series:sigma}
u_\lambda(\sigma)=\sum_{m}c_m(\lambda)\sigma^m
\end{equation}
with initial condition $u_\lambda(0)=0$ in a domain where $\partial G_\lambda(\sigma,u)/\partial u = e^{-u} + \sigma \neq 0$.
The radius of convergence
of this power series equals the distance from the origin in the complex $\sigma$-plane to the closest singular point~\cite{Titchmarsh},\cite[p.175, Theorem 4.3.2]{Antimirov et. al}.
The critical points in the complex $\sigma$-plane, 
where $\partial G_\lambda(\sigma,u)/\partial u = 0$, satisfy the relations, after using 
\eqref{eq:G(u)},
\begin{eqnarray} \label{u:crit}
e^{-u} &= & -\sigma\ ,\\
\lambda -u-1& = & 1/\sigma\ ,\label{eq:ulambdasigma}\\
-e^{\lambda-1} \sigma &= & e^{1/\sigma} \label{eq:lambdasigma} \ .
\end{eqnarray}
From these, $\sigma$ can be written in terms of the Lambert $W$ function
\begin{equation} \label{def1:sigma}
\sigma_m=1/W_m(-e^{\lambda-1}) \ ,
\end{equation}
where the $m$th root is defined by the $m$th branch of $W$.
Further, by \eqref{eq:ulambdasigma}, $\Im(1/\sigma)=\Im(-u)$, and for the principal
branch $\Im(u)\in (-\pi,\pi)$.
Thus we conclude that there are only two acceptable values for $m$, i.e. $m=-1,0$.
We now write \eqref{eq:lambdasigma} as
\begin{equation} \label{def2:sigma}
\sigma_m=-\exp\left\{1-\lambda+W_m(-e^{\lambda-1})\right\} \quad (m=-1,0) \ ,
\end{equation}
which defines the domain of convergence by the inequality
\begin{equation}
\left|\sigma\right|<\min_{m\in\left\{-1,0\right\}}\left|-\exp\left\{1-\lambda+W_m(-e^{\lambda-1})\right\} \right|\ . \notag
\end{equation}
or
\begin{equation} \label{ineq:trans}
\ln\left|\sigma\right|<1-\Re\lambda+\min_{m\in\left\{-1,0\right\}}\Re W_m(-e^{\lambda-1})\ .
\end{equation}
Since $\Re W_{-1}(x)\leq\Re W_0(x)$ for all $x\in\R$ \cite{Corless1996Lam},
after substituting $\lambda=\tau/\sigma$ the condition \eqref{eq:ConvComtetSigmaTau} follows.
\end{proof}

To express the condition \eqref{eq:ConvComtetSigmaTau} of convergence of the series \eqref{eq:Comtet}
in terms of independent variable $x$ in \eqref{eq:st} it is convenient to prove the following lemma.

\begin{Lem} \label{lemma}
Solution of inequality $\Re W_{-1}(x)>a$ for $x<0$, where $a$ is constant, is given by
\begin{equation} \label{eq:lemma}
x<b =
\begin{cases}
          ae^a,       & a\leq-1\\
-e^a\eta_0\csc\eta_0, & a>-1
\end{cases}
\end{equation}
where $\eta_0\in\left(0,\pi\right)$ is the root of equation $\eta_0\cot\eta_0=-a$.
\end{Lem}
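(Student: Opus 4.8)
The plan is to regard $g(x):=\Re W_{-1}(x)$ as a real-valued function of $x\in(-\infty,0)$, show it is a continuous, strictly decreasing bijection onto $\R$, invert it explicitly, and then read off the solution set of $g(x)>a$ as $x<b:=g^{-1}(a)$.

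First I would split at the branch point $x=-1/e$. On $(-1/e,0)$ the branch $W_{-1}$ is real with values in $(-\infty,-1)$, so $g(x)=W_{-1}(x)$ is strictly decreasing there, running from $-1$ down to $-\infty$; on this piece the inverse is $b=ae^a$, which covers exactly $a\le-1$, and $g(-1/e)=-1$. For $x<-1/e$ I write $w=W_{-1}(x)=\xi+i\eta$ with $\eta\in(-\pi,0)$, the range of the $(-1)$ branch along the cut~\cite{Corless1996Lam}. Separating $we^w=x$ into real and imaginary parts and imposing $\Im x=0$ (with $\sin\eta\ne0$) gives $\xi=-\eta\cot\eta$, and feeding this back into $\Re x$ yields $x=-e^{\xi}\eta\csc\eta=-e^{-\eta\cot\eta}\eta\csc\eta$. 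Hence on $(-\infty,-1/e)$ one has the parametric description $g(x)=\xi=-\eta\cot\eta$, $x=x(\eta)$, with parameter $\eta\in(-\pi,0)$.

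Next I would establish the monotonicity. Elementary properties of $t\cot t$ show that $\eta\mapsto\eta\cot\eta$ increases on $(-\pi,0)$ from $-\infty$ to $1$, so $\xi(\eta)=-\eta\cot\eta$ is a decreasing bijection of $(-\pi,0)$ onto $(-1,+\infty)$; likewise $\eta\mapsto\eta\csc\eta=\eta/\sin\eta$ is positive and decreasing on $(-\pi,0)$, so $x(\eta)=-e^{\xi(\eta)}\eta\csc\eta$ is $(-1)$ times a product of two positive decreasing factors, hence strictly increasing, running from $-\infty$ as $\eta\to-\pi^+$ to $-1/e$ as $\eta\to0^-$. Therefore $\xi=g(x)$ is strictly decreasing in $x$ on $(-\infty,-1/e)$, with $g(x)\to+\infty$ as $x\to-\infty$ and $g(x)\to-1$ as $x\to-1/e^-$. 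Since $g(-1/e)=-1$ matches the limit from the other side, $g$ is continuous and strictly decreasing on all of $(-\infty,0)$ with range $\R$, and so $g(x)>a\iff x<b$ with $b=g^{-1}(a)$.

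It then remains to identify $b$. If $a\le-1$ then $b\in[-1/e,0)$, where $g=W_{-1}$ is the real branch, so $W_{-1}(b)=a$ gives $b=ae^a$. If $a>-1$ then $b\in(-\infty,-1/e)$, so with $\eta\in(-\pi,0)$ the defining equations give $-\eta\cot\eta=a$ and $b=-e^{a}\eta\csc\eta$; since $\eta\cot\eta$ and $\eta\csc\eta$ are even in $\eta$, I may put $\eta_0=-\eta\in(0,\pi)$, whence $\eta_0\cot\eta_0=-a$ and $b=-e^{a}\eta_0\csc\eta_0$, the root $\eta_0\in(0,\pi)$ being unique because $\eta_0\cot\eta_0$ is strictly decreasing from $1$ to $-\infty$ there. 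This is precisely \eqref{eq:lemma}. The main obstacle is the monotonicity step on $(-\infty,-1/e)$: one has to see simultaneously that both $\xi(\eta)$ and $x(\eta)$ are strictly monotone in the parameter $\eta$, so that $g$ inherits strict monotonicity, and that $g$ glues continuously and monotonically across the branch point $x=-1/e$; the remainder is bookkeeping with $we^w=x$ and the function $t\cot t$.
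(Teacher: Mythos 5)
Your proof is correct and follows essentially the same route as the paper: the same splitting at the branch point $x=-1/e$, the same separation of $we^{w}=x$ into real and imaginary parts yielding the parametric description $\xi=-\eta\cot\eta$, $x=-e^{-\eta\cot\eta}\eta\csc\eta$ with $\eta\in(-\pi,0)$, and the same case distinction $a\leq-1$ versus $a>-1$. The only difference is that you make explicit the strict monotonicity and continuity of $\Re W_{-1}$ on $(-\infty,0)$, which the paper uses implicitly when it reads off the solution $x<-\eta_0\csc(\eta_0)e^{-\eta_0\cot\eta_0}$ from the parametric equations, so your version is a tightened write-up of the same argument rather than a different one.
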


\begin{proof}
We set $W_{-1}(x)=\xi+i\eta$ for real negative $x$ where $\xi\leq -1, \eta=0$ for $-1/e\leq x<0$
and $\xi> -1, -\pi<\eta<0$ for $x<-1/e$.
Then $\xi,\eta$ obey~\cite{Corless1996Lam}
\begin{equation} \label{sys1:x}
x=e^\xi(\xi\cos\eta-\eta\sin\eta), \quad 0=e^\xi(\eta\cos\eta+\xi\sin\eta)  \ .  \notag
\end{equation}
From these equations, one can find the dependence of $\xi$ on $x$ explicitly for $-1/e\leq x<0$
\begin{equation} \label{eq:eta=0}
\xi=W_{-1}(x)
\end{equation}
and parametrically for $x<-1/e$
\begin{equation} \label{eta<0:x}
x=-\eta\csc(\eta)e^{-\eta\cot\eta} \ ,
\end{equation}
\begin{equation} \label{eta<0:xi}
\xi=-\eta\cot\eta \ ,
\end{equation}
where $-\pi<\eta<0$.

Now we consider inequality $\xi>a$ in two cases comparing $a$ with value $-1$.
\\When $a\leq-1$ the inequality $\xi>a$ holds for all $x<-1/e$
because in this case $\xi > -1$ by \eqref{eta<0:xi}. For $-1/e\leq x<0$
we solve inequality $W_{-1}(x)>a$ due to \eqref{eq:eta=0} with the result $-1/e\leq x<ae^a$.
Thus $\xi>a$ for $x<ae^a$.
When $a>-1$ the inequality $\xi>a$ can have a solution only for $x<-1/e$
because $\xi\leq -1$ for the rest $x$. According to \eqref{eta<0:x} and \eqref{eta<0:xi}
the solution is given by $x<-\eta_0\csc(\eta_0)\exp(-\eta_0\cot\eta_0)$
where $\eta_0\in(-\pi,0)$ satisfies the equation $-\eta_0\cot\eta_0=a$
due to which the solution can also be written as
$x<-e^a\eta_0\csc\eta_0$ and $\eta_0\in(0,\pi)$.
Joining both cases, the lemma follows.
\end{proof}
Note that in the formula \eqref{eq:lemma}, when $a>-1$ but $a\neq0$, we can also write $b=ae^a/\cos\eta_0$.

\begin{Th} \label{Th:ConvComtetReal}
The series \eqref{eq:Comtet} is convergent when
\begin{equation} \label{eq:ConvComtetReal}
x>x_\alpha =
\begin{cases}
\left(e/\alpha\right)^\alpha, & 0<\alpha\leq1 \\
 e^{\alpha\eta_0\csc\eta_0}, & \alpha>1
\end{cases}
\end{equation}
where $\eta_0$ satisfies the equation $\eta_0\cot\eta_0=1-\ln\alpha \mspace{8mu} (0<\eta_0<\pi)$,
and the series is divergent otherwise.
\end{Th}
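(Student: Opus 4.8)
The plan is to convert the $(\sigma,\tau)$-criterion of Theorem~\ref{Th:ConvComtetSigmaTau} into a statement about $x$ by using the substitutions $\sigma=\alpha/\ln x$, $\tau=\alpha\ln\ln x/\ln x$ from \eqref{eq:st}, together with Lemma~\ref{lemma}. First I would observe that for $x>1$ we have $\lambda=\tau/\sigma=\ln\ln x$, so that inequality \eqref{eq:ConvComtetSigmaTau} reads
\begin{equation*}
\ln\sigma < 1-\ln\ln x + \Re W_{-1}\!\left(-e^{\ln\ln x - 1}\right) = 1-\ln\ln x + \Re W_{-1}\!\left(-\tfrac{\ln x}{e}\right)\ .
\end{equation*}
Writing $\ln\sigma = \ln\alpha - \ln\ln x$, the two $\ln\ln x$ terms cancel and the condition collapses to $\Re W_{-1}(-\ln x/e) > \ln\alpha - 1$. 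Thus convergence is governed purely by the inequality $\Re W_{-1}(y)>a$ with $y=-\ln x/e<0$ and $a=\ln\alpha-1$, which is exactly the situation Lemma~\ref{lemma} resolves.

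Next I would apply Lemma~\ref{lemma} with this $a$. The lemma's case split is $a\le -1$ versus $a>-1$, i.e. $\ln\alpha\le 0$ versus $\ln\alpha>0$, which is precisely the dichotomy $0<\alpha\le 1$ versus $\alpha>1$ in the theorem. In the first case the lemma gives $y<ae^a=(\ln\alpha-1)e^{\ln\alpha-1}=\alpha(\ln\alpha-1)/e$; substituting $y=-\ln x/e$ turns this into $-\ln x < \alpha(\ln\alpha-1)$, i.e. $\ln x > \alpha(1-\ln\alpha)=\alpha-\alpha\ln\alpha$. Exponentiating and simplifying $e^{\alpha-\alpha\ln\alpha}=e^\alpha\alpha^{-\alpha}=(e/\alpha)^\alpha$ yields $x>(e/\alpha)^\alpha$, matching the first line of \eqref{eq:ConvComtetReal}. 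In the second case the lemma gives $y<-e^a\eta_0\csc\eta_0$ where $\eta_0\cot\eta_0=-a=1-\ln\alpha$; with $e^a=\alpha/e$ this reads $-\ln x/e<-(\alpha/e)\eta_0\csc\eta_0$, hence $\ln x>\alpha\eta_0\csc\eta_0$ and $x>e^{\alpha\eta_0\csc\eta_0}$, matching the second line. The condition $\eta_0\cot\eta_0=1-\ln\alpha$ with $\eta_0\in(0,\pi)$ is exactly as stated.

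Finally I would note that the boundary between the two regimes is consistent: at $\alpha=1$ we have $a=-1$, and the root $\eta_0\cot\eta_0=0$ gives $\eta_0=\pi/2$, so $\eta_0\csc\eta_0=\pi/2$ and the second formula gives $x>e^{\pi/2}$, which agrees with $(e/1)^1=e$ only in the sense that both branches of the lemma meet there continuously — I should check this limiting behaviour, or simply remark that the lemma already guarantees consistency since it was proved by joining the two cases. The divergence statement ("divergent otherwise") follows because Theorem~\ref{Th:ConvComtetSigmaTau} characterizes the domain of convergence by an \emph{equality}-boundary coming from the nearest singularity of $u_\lambda(\sigma)$, so the complement of the open region is genuinely a divergence region. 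The only genuine obstacle I anticipate is bookkeeping: making sure the direction of each inequality is preserved through the substitution $y=-\ln x/e$ (which is orientation-reversing) and through the exponential, and confirming that the range $\eta_0\in(0,\pi)$ transported from the lemma is the correct one here. None of this is deep; it is a careful change of variables applied to Lemma~\ref{lemma}.
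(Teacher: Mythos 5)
Your proposal is correct and follows essentially the same route as the paper: substitute \eqref{eq:st} into the criterion of Theorem~\ref{Th:ConvComtetSigmaTau} (the $\ln\ln x$ terms cancel, giving $\Re W_{-1}(-\ln x/e)>\ln\alpha-1$), then apply Lemma~\ref{lemma} with $a=\ln\alpha-1$, the only extra remark in the paper being that $x_\alpha>1$, which justifies the standing assumption $x>1$. The one slip is in your boundary check: at $\alpha=1$ the equation is $\eta_0\cot\eta_0=1-\ln\alpha=1$, whose root tends to $0$ (not $\eta_0=\pi/2$, which corresponds to $\alpha=e$), so $\eta_0\csc\eta_0\to1$ and the second formula indeed tends to $e^{\alpha}=e$, matching $(e/\alpha)^\alpha$ continuously as you suspected.
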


\begin{proof}
We consider the condition of convergence of the series \eqref{eq:Comtet}
established by Theorem~\ref{Th:ConvComtetSigmaTau}
in the real case, i.e. when $\alpha>0$ and $x>1$.
Substituting the expressions \eqref{eq:st} in \eqref{eq:ConvComtetSigmaTau} we obtain
\begin{equation} \label{ineq:Re}
\Re W_{-1}\left(-\frac{\ln x}{e}\right)>\ln\alpha-1 \ .
\end{equation}
Applying Lemma \ref{lemma} to the inequality \eqref{ineq:Re} we come to \eqref{eq:ConvComtetReal},
where $x_\alpha>1$, which justifies the assumption $x>1$.
Thus the theorem is completely proved.
\end{proof}

{\it Note}. The statement of Theorem \ref{Th:ConvComtetReal} was independently reported
by A.J.E.M. Janssen and J.S.H. van Leeuwaarden \cite{JanssenLeeuwaarden}.

\begin{remark}
In the formula \eqref{eq:ConvComtetReal}, when $\alpha>1$ but $\alpha\neq e$, we can also write $x_\alpha=\left(e/\alpha\right)^{\alpha\sec\eta_0}$.
\end{remark}

\begin{remark}\label{rem:x(alpha)}
It follows from \eqref{eq:ConvComtetReal} that $x_\alpha\rightarrow1$ as $\alpha\rightarrow0$
and $x_\alpha\rightarrow\infty$ as $\alpha\rightarrow\infty$.
In addition, one can show that a function of $\alpha$ defined by \eqref{eq:ConvComtetReal} is monotone increasing.
Therefore, the larger $\alpha$, the less the domain of convergence of the series \eqref{eq:Comtet}.
\end{remark}

\begin{corollary} \label{cor:x>e}
Series \eqref{eq:Comtet} for $\alpha=1$, i.e. for $W$ function, is convergent when
\begin{equation} \label{eq:Remark2.2}
\Re\left[W_{-1}\left(-\frac{\ln x}{e}\right)\right]>-1 \ ,
\end{equation}
which is equivalent to $x>e$,
and divergent when the opposite inequalities hold.
\end{corollary}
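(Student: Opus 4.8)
The plan is to read the corollary off as the special case $\alpha=1$ of the results already established, so that essentially no new work is needed; the only points requiring care are that $\alpha=1$ is the (closed) endpoint of the range $0<\alpha\le1$ in Theorem~\ref{Th:ConvComtetReal}, and that the two forms of the convergence condition stated in the corollary — the inequality \eqref{eq:Remark2.2} and the bound $x>e$ — are in fact equivalent.

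First I would put $\alpha=1$ directly into the inequality \eqref{ineq:Re}, which was obtained in the proof of Theorem~\ref{Th:ConvComtetReal} by substituting \eqref{eq:st} into the condition \eqref{eq:ConvComtetSigmaTau} of Theorem~\ref{Th:ConvComtetSigmaTau}. Since $\ln 1-1=-1$, this yields at once $\Re W_{-1}(-\ln x/e)>-1$, which is \eqref{eq:Remark2.2}, establishing the first claim.

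Next I would translate this inequality into a condition on $x$ by invoking Lemma~\ref{lemma} with $a=-1$. Because $a=-1\le-1$, the lemma places us in its first case, giving the threshold $b=ae^{a}=-e^{-1}$, so $\Re W_{-1}(y)>-1$ holds precisely for $y<-1/e$. Substituting $y=-\ln x/e$ gives $-\ln x/e<-1/e$, i.e. $\ln x>1$, i.e. $x>e$; equivalently, putting $\alpha=1$ into the first branch of \eqref{eq:ConvComtetReal} gives $x_\alpha=(e/\alpha)^\alpha=e$, which also confirms $x_\alpha>1$. The divergence statement is then immediate: the reversed inequality $\Re W_{-1}(-\ln x/e)\le-1$ corresponds to $x\le e$, which is exactly the ``divergent otherwise'' clause of Theorem~\ref{Th:ConvComtetReal}.

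Since every step is a direct substitution into a theorem or lemma proved above, there is no genuine obstacle here; the only thing to watch is bookkeeping — confirming that the boundary value $\alpha=1$ is indeed covered (it is, as the closed endpoint of $0<\alpha\le1$) and that $a=-1$ lands in the first case of Lemma~\ref{lemma}, so that the simple closed form $b=ae^{a}$ applies and no transcendental equation $\eta_0\cot\eta_0=-a$ needs to be solved.
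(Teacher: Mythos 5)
Your proposal is correct and follows essentially the same route as the paper: the paper's proof simply sets $\alpha=1$ in \eqref{ineq:Re} and \eqref{eq:ConvComtetReal}, which is exactly your substitution (with the application of Lemma~\ref{lemma} at $a=-1$ spelled out explicitly, a detail the paper leaves implicit since it is already contained in the proof of Theorem~\ref{Th:ConvComtetReal}).
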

\begin{proof}
Follows immediately from \eqref{ineq:Re} and \eqref{eq:ConvComtetReal}.
\end{proof}
\begin{remark}
In terms of variable $\sigma=1/\ln x$ series \eqref{eq:Comtet} is convergent for $0<\sigma<1$.
\end{remark}

We now prove a statement, relating to divergence of the series \eqref{eq:Comtet},
which was found by us earlier than the conditions \eqref{eq:ConvComtetReal} but
unlike Theorem \ref{Th:ConvComtetReal} concerning positive $\alpha$
it deals with any $\alpha\neq0$. In addition, the statement demonstrates
an interesting application of the ratio test to the series \eqref{eq:Comtet}.

\begin{Th} \label{Th:DivComtet}
The series \eqref{eq:Comtet} is divergent at least for
\begin{equation}\label{eq:thdiv}
 e^{-\left|\alpha \right|}<x<e^{b\left|\alpha \right|}
\end{equation}
{where}
\medskip

$$
b=
  \left\{
  \begin{array}{cc}
   W \left(\displaystyle 1/\left|\alpha \right| \right) & \mbox{when $\left|\alpha \right|< 1/e $ },\\[10pt]
   1                                              & \mbox{when $\left|\alpha \right|\geq 1/e $ }.
  \end{array}
  \right.
$$
\end{Th}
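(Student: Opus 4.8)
The plan is to apply the ratio test directly to the double series \eqref{eq:Comtet}, viewed as a power series in a single combined variable. The key observation is that if we set $\alpha = \ln x/\ln\ln x$-type scalings aside and instead track the magnitude of the general term $t_{n,m} = (-1)^{n-m}\StirCycle{n}{n-m+1}\sigma^{n-m}\tau^m/m!$, then using $\sigma = \alpha/\ln x$ and $\tau = \alpha\ln\ln x/\ln x$ the product $\sigma^{n-m}\tau^m$ carries a factor $\alpha^n$. So I would first reorganize the sum by the total degree $n$ and factor out $\alpha^n$, writing the $n$-th block as $\alpha^n$ times a quantity depending only on $\ln x$ (through $1/\ln x$ and $\ln\ln x/\ln x$). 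The aim is to show that for $x$ in the stated range the $n$-th block does not tend to zero, or that successive ratios exceed $1$, forcing divergence.

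The concrete steps I would carry out are: (1) Bound the inner sum over $m$ from below. Since all Stirling cycle numbers are positive and the signs in the inner sum are $(-1)^{n-m}$, I cannot simply add magnitudes, so instead I would isolate a single dominant term — the natural candidate is $m=1$ (giving $\StirCycle{n}{n}=1$, contributing $\sigma^{n-1}\tau$) or $m=n$ (giving $\StirCycle{n}{1}=(n-1)!$, contributing $\tau^n/n!$, i.e. $(n-1)!/n!$ times $\tau^n$ up to sign). The term with $m=n$ looks most promising because $\StirCycle{n}{1}=(n-1)!$ grows, and $\tau^n/n! = \alpha^n(\ln\ln x)^n/((\ln x)^n n!)$; comparing the $(n-1)$-th and $n$-th such terms gives a ratio $\sim |\alpha|\,|\ln\ln x|/\ln x$, whose $\limsup$ behavior I would relate to the endpoints $e^{-|\alpha|}$ and $e^{b|\alpha|}$. (2) Show the other terms in the inner sum cannot cancel this dominant contribution — this is where the argument is delicate, and I would either use the known asymptotics of $\StirCycle{n}{k}$ (concentration near $k\approx \ln n$) to argue the $m=n$ term is genuinely dominant, or use a generating-function identity to evaluate the inner sum in closed form. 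In fact the inner sum over $m$ for fixed $n$ has a known closed evaluation related to the coefficients $c_n$ appearing in $u$, which \cite{CRpaper} studied; invoking that would let me bypass the cancellation issue.

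(3) Finally, translate the condition "$n$-th term $\not\to 0$" into the explicit interval. The lower endpoint $e^{-|\alpha|}$ should come from requiring $\ln x > -|\alpha|$, i.e. from the regime where $\ln\ln x$ is complex or the relevant quantity flips sign, effectively the boundary where $\sigma$ or $\tau$ ceases to be small. The upper endpoint $e^{b|\alpha|}$ with $b = W(1/|\alpha|)$ for $|\alpha| < 1/e$ and $b=1$ otherwise: I expect $b$ to arise as the solution of an equation like $b e^{b} = 1/|\alpha|$ (hence $b = W(1/|\alpha|)$) coming from setting a ratio-test expression equal to $1$; the capping at $b=1$ when $|\alpha|\ge 1/e$ should reflect that $W(1/|\alpha|)\ge 1$ exactly when $1/|\alpha| \ge e$, combined with an independent constraint (perhaps $\ln\ln x < \ln x$, i.e. the term $\tau < \sigma$-type bound) that takes over.

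\medskip
\textit{Main obstacle.} The hard part will be step (2): controlling the alternating inner sum over $m$ so that a lower bound on $|c_n|$ (the $n$-th coefficient of the diagonal power series) genuinely survives. A naive triangle inequality goes the wrong way. I expect the cleanest route is to use the closed form / generating function for the inner sum established in \cite{CRpaper} rather than fighting the cancellations term by term; with that in hand the remaining work is the asymptotic analysis of a single explicit sequence and the endpoint bookkeeping, both routine. A secondary subtlety is that the theorem claims divergence "at least" on this interval, so I only need a sufficient condition — I can afford to be lossy, which makes isolating one dominant term (rather than getting sharp asymptotics) an acceptable strategy.
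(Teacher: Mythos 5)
Your plan stands or falls on step (2), and that step is not actually supplied. The homogeneous block of total degree $n$ is the alternating sum $\sum_{m=1}^n(-1)^{n-m}\StirCycle{n}{n-m+1}\sigma^{n-m}\tau^m/m!$, and lower-bounding its modulus is exactly the hard content you defer: isolating the $m=n$ term and hoping the remaining terms do not cancel it has no justification, and there is no ready-made closed form in \cite{CRpaper} for these blocks that bypasses the cancellation (the blocks are the homogeneous parts of $u(\sigma,\tau)$, and showing they do not tend to zero along a ray $\tau/\sigma$ fixed is essentially the singularity analysis of Theorem \ref{Th:ConvComtetSigmaTau} all over again, not routine bookkeeping). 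Even granting step (2), your dominant-term heuristic produces the ratio $|\tau|$ only, hence at best divergence where $|\tau|>1$; that does not cover the claimed interval \eqref{eq:thdiv}. The piece $e^{-|\alpha|}<x\le 1$, and, when $|\alpha|\ge 1/e$, the piece extending up to $e^{|\alpha|}$, come from the condition $|\sigma|>1$ (i.e.\ $|\ln x|<|\alpha|$, which yields both endpoints $e^{\pm|\alpha|}$); your sketch offers no argument for this regime, and your guess that the lower endpoint reflects $\ln\ln x$ becoming complex is not the actual mechanism.

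The paper avoids your obstacle entirely by never grouping by degree. It re-indexes \eqref{eq:Comtet} as a double series with entries $c_{m,l}=\frac{(-1)^l}{m!}\StirCycle{l+m}{l+1}\sigma^l\tau^m$, applies the ratio test to each column (fixed $l$, sum over $m$, limiting ratio $|\tau|$) and each row (fixed $m$, sum over $l$, limiting ratio $|\sigma|$) using standard asymptotics of the Stirling cycle numbers, and then invokes the Limaye--Zeltser theorem on double series \cite{LimayeZeltser}, under which divergence of a single row or column forces divergence of the double series. Because rows and columns are tested through absolute ratios, no control of sign cancellation is ever needed; the union of $\{|\tau|>1\}$ and $\{|\sigma|>1\}$, rewritten in terms of $x$ via \eqref{eq:st}, is precisely \eqref{eq:thdiv} with the stated $b=W(1/|\alpha|)$ for $|\alpha|<1/e$ and $b=1$ otherwise. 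If you wish to keep your blocked formulation, you would have to prove that the degree-$n$ blocks fail to tend to zero throughout \eqref{eq:thdiv}, which is a strictly harder task than the theorem you are asked to prove.
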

\medskip

\begin{proof} Changing indices for summing the expansion \eqref{eq:Comtet} can be written through a double series \cite{CRpaper}
\begin{equation}\label{doublesum}
u=\sum_{m=1}^\infty \sum_{l=0}^\infty c_{m,l},
\end{equation}
where
$$
c_{m,l}=c_{m,l}(\sigma,\tau)=\frac{(-1)^l}{m!}
\StirCycle{l+m}{l+1}
\sigma ^l \tau^{m}
$$
For the column-series $\sum_m c_{m,l}$ the ratio test gives
$$
\lim\limits_{m \to\infty}\left| \frac{c_{m+1,l}}{c_{m,l}} \right|
=\left| \tau \right| \lim\limits_{m \to\infty} \frac
{ \displaystyle\StirCycle{l+m+1}{l+1} }
{ \displaystyle (m+1)\StirCycle{l+m}{l+1} }
= \left| \tau \right|
$$
as according to \cite{Abramowitz}
$$
\lim\limits_{p \to\infty}
\frac
{ \displaystyle\StirCycle{p+1}{l+1} }
{ \displaystyle p \StirCycle{p}{l+1} }
= 1 \qquad \mbox{for fixed } l
$$
in our notations.
\\For the row-series $\sum_l c_{m,l}$ we have
$$
\lim\limits_{l \to\infty}\left| \frac{c_{m,l+1}}{c_{m,l}} \right|
=\left| \sigma \right| \lim\limits_{l \to\infty}\frac
{ \displaystyle\StirCycle{l+m+1}{l+2} }
{ \displaystyle\StirCycle{l+m}{l+1} }
= \left| \sigma \right|
$$
because by \cite{Abramowitz}
$$
\lim\limits_{l \to\infty} \frac
{ \displaystyle\StirCycle{l+m}{l+1} }
{ (l+1)^{2m-2} }
= \frac{1}{2^{m-1}(m-1)!} \qquad \mbox{for fixed } m.
$$
According to \cite[Theorem 2.7]{LimayeZeltser} the series \eqref{doublesum} (and therefore \eqref{eq:Comtet}) is divergent when $\left| \tau \right|>1$ or $\left| \sigma \right|>1$. Expressing these inequalities in terms of $x$ by (\ref{eq:st}) and uniting the obtained sets we come to the stated inequality \eqref{eq:thdiv}.
\end{proof}

By Theorem \ref{Th:DivComtet} for $\alpha=1$ the series \eqref{eq:Comtet} is divergent at least for $e^{-1}<x<e$, which is consistent with Corollary \ref{cor:x>e}.

For comparison, the curves described by equations \eqref{eq:ConvComtetReal} and \eqref{eq:thdiv} are shown in Figure~\ref{Fig:ComtetReal} by solid and dash-dotted lines respectively, together with the bound given in~\cite{CRpaper}.
\begin{figure}[ht]
  \noindent\centering
  {
  \includegraphics[width=80mm]{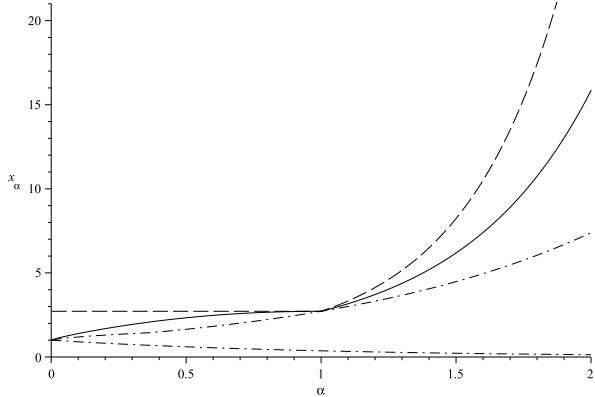}
  }
  \caption{Boundary of domain of convergence of series \eqref{eq:Comtet} given by \eqref{eq:ConvComtetReal}
           is depicted by solid line; dashed and dash-dotted lines are described by equations given in reference~\cite{CRpaper}
           and \eqref{eq:thdiv} respectively.}
  \label{Fig:ComtetReal}
\end{figure}

\subsection{Convergence in complex case} \label{sec:ComtetComplex}
In this subsection we consider equations \eqref{eq:opening}, \eqref{eq:st} only for $\alpha=1$ (under the same relation \eqref{eq:basic}) and derive convergence conditions for the series \eqref{eq:Comtet} in the complex case using the results obtained in Section \ref{sec:ComtetReal} in the real case and providing a continuous continuation of the latter. We set
\begin{equation} \label{st:complex}
\sigma=1/{\ln z}, \tau={\ln\ln z}/{\ln z} \ ,
\end{equation}
where $z=x+iy$ is a complex variable and $\ln z$ denotes the principal branch of the natural logarithm. Then the right-hand side of the series \eqref{eq:Comtet} represents a function of the complex variable $z$ and the following theorem holds.

\begin{Th} \label{Th:ConvComtetComplex}
The domain of convergence of the series \eqref{eq:Comtet} in the complex $z$-plane is \mbox{defined by}
\begin{equation} \label{eq:ConvComtetComplex}
\Re W_m\left(-\frac{\ln z}{e}\right)>-1,
\end{equation}
where the branch $W_m$ is chosen as follows
$$
m=
\begin{cases}
-1, & -\pi < \arg z \leq 0 \\
1, & 0 < \arg z \leq \pi
\end{cases}
$$
\end{Th}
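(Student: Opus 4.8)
The plan is to reduce the bivariate series \eqref{eq:Comtet} to the single-variable power series already studied in the proof of Theorem~\ref{Th:ConvComtetSigmaTau}. Put $\lambda=\tau/\sigma$; with $\alpha=1$ and the substitution \eqref{st:complex} this means $\lambda=\ln\ln z$ and $\sigma=1/\ln z$. Regrouping the right-hand side of \eqref{eq:Comtet} by powers of $\sigma$ turns it into the series $u_\lambda(\sigma)=\sum_m c_m(\lambda)\sigma^m$ of \eqref{series:sigma}, now with a complex value of the parameter $\lambda$. Everything in the proof of Theorem~\ref{Th:ConvComtetSigmaTau} from \eqref{eq:G(u)} through \eqref{def2:sigma} goes through for complex $\lambda$ (using the holomorphic implicit function theorem): $u_\lambda$ is analytic away from the critical points $\sigma_m(\lambda)=-\exp\{1-\lambda+W_m(-e^{\lambda-1})\}$, and the radius of convergence of $u_\lambda$ in $\sigma$ is the distance from the origin to the nearest critical point that bounds the sheet on which $u_\lambda(0)=0$ lies.

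The next step is to rewrite the convergence inequality. Since $\sigma=1/\ln z$ we have $\ln|\sigma|=-\ln|\ln z|=-\Re\ln\ln z=-\Re\lambda$, whereas $\ln|\sigma_m(\lambda)|=1-\Re\lambda+\Re W_m(-e^{\lambda-1})$, and $-e^{\lambda-1}=-\ln z/e$; hence $|\sigma|<|\sigma_m(\lambda)|$ is equivalent, after cancelling $-\Re\lambda$, to $\Re W_m(-\ln z/e)>-1$, which is the inequality \eqref{eq:ConvComtetComplex}. It remains to pin down which branch $m$ supplies the nearest admissible critical point, and I would do this by continuous continuation from the positive real axis. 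For real $z>1$ the admissible branches are $m\in\{-1,0\}$, exactly as in the proof of Theorem~\ref{Th:ConvComtetSigmaTau}, and the binding one is $m=-1$ (Corollary~\ref{cor:x>e}); the restriction on $m$ comes from the requirement $\Im u\in(-\pi,\pi)$ for the principal branch (recall $u=W_0(z)-\ln z+\ln\ln z$), which through \eqref{eq:ulambdasigma} constrains $\Im(1/\sigma_m)=\Im W_m(-\ln z/e)$. Tracking $\lambda=\ln\ln z$ as $z$ leaves the positive real axis, this constraint keeps the admissible pair at $\{-1,0\}$ for $\arg z\le0$ but moves it to $\{0,1\}$ for $\arg z>0$; within each pair the critical point of smaller modulus — equivalently, of smaller $\Re W_m(-\ln z/e)$ — is the one with $m=-1$, resp.\ $m=1$, which is what \eqref{eq:ConvComtetComplex} asserts. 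The jump of the label across $\arg z=0$ is harmless: by the reflection identity $\overline{W_k(\bar\zeta)}=W_{-k}(\zeta)$ one has $\Re W_{-1}(x+i0^{+})=\Re W_1(x+i0^{-})$ for real $x<0$, so the inequality \eqref{eq:ConvComtetComplex} is itself continuous across the positive real axis (and, similarly, across the cut $\arg z=\pi$ of $\ln z$).

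The main obstacle is precisely this branch-cut bookkeeping in the last step: one must check that, as $z$ sweeps a whole half-plane and $\lambda$ runs over the corresponding image, no other critical point $\sigma_{m}(\lambda)$ ever overtakes the claimed one, so that the correct index is $-1$ on all of $(-\pi,0]$ and $1$ on all of $(0,\pi]$ with no intermediate switching, and one must treat the boundary case $\Re W_m(-\ln z/e)=-1$ (where the nearest critical point lies on the circle $|\sigma|=|\sigma_m|$ and the series diverges) as well as the behavior on $\arg z=\pi$. Concretely this amounts to describing the level curve $\Re W_m(-\ln z/e)=-1$ and the sign of $\Re W_m(-\ln z/e)+1$ along rays and circles of the $z$-plane, using the known geometry of the ranges of $W_{-1}$, $W_0$, $W_1$ from~\cite{Corless1996Lam}. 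Everything else — the reduction to one variable and the algebraic simplification of the radius-of-convergence inequality — is routine.
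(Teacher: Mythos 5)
Your setup is sound and matches the paper's strategy: reduce \eqref{eq:Comtet} to the one-variable series \eqref{series:sigma} with complex parameter $\lambda=\ln\ln z$, note that the critical points are $\sigma_m=-\exp\{1-\lambda+W_m(-e^{\lambda-1})\}$ with $-e^{\lambda-1}=-\ln z/e$, and observe that $|\sigma|<|\sigma_m|$ collapses to $\Re W_m(-\ln z/e)>-1$ after cancelling $\Re\lambda$. But the theorem's entire content is the branch selection ($m=-1$ on $-\pi<\arg z\le 0$, $m=1$ on $0<\arg z\le\pi$), and at exactly that point your argument is an assertion, not a proof: you state that the admissible set stays $\{-1,0\}$ in the closed lower half-plane, jumps to $\{0,1\}$ in the upper half-plane, and that within each pair the non-principal branch is binding, and you then concede that checking that ``no other critical point ever overtakes the claimed one'' is the main obstacle. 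That concession is the gap. The admissibility constraint is more delicate than in the real case: from \eqref{eq:ulambdasigma} one gets $\Im W_m(-e^{\lambda-1})\in(\Im\lambda-\pi,\Im\lambda+\pi)$, a window that shifts with $\arg\ln z$ and can overlap the ranges of three branches, so the claimed pairing and the absence of intermediate switching over a whole half-plane require an actual argument; likewise the comparison $\Re W_{-1}\le\Re W_0$ is cited in the literature only for real arguments, so ``smaller modulus $\Leftrightarrow$ the branch named in \eqref{eq:ConvComtetComplex}'' also needs justification off the axis.

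For comparison, the paper closes this step differently: it works only in $D=\{-\pi<\arg z\le 0\}$, posits that the boundary there is a single continuous curve $\Re W_q(-\ln z/e)=-1$, and identifies $q$ by writing $W_q(-\ln z/e)=-1+i\eta$ and separating real and imaginary parts to get the parametric boundary equations \eqref{eq:argZ}--\eqref{eq:lnZ}. Matching the limit $\arg z\to 0^-$ with the real-axis result \eqref{eq:Remark2.2} forces the solution branch $\eta\in(-\pi,0]$ and hence $q=-1$, and \eqref{eq:argZ} shows such an $\eta$ exists precisely when $z\in D$, so one branch index really does govern all of $D$. The upper half-plane is then obtained for free from conjugate symmetry: the series has real coefficients, so its convergence domain is symmetric under $z\mapsto\bar z$, and $W_k(w)=\overline{W_{-k}(\bar w)}$ converts the condition $\Re W_{-1}>-1$ at $\bar z$ into $\Re W_1(-\ln z/e)>-1$ at $z$ --- which is considerably lighter than redoing the admissibility analysis in the upper half-plane, where you only invoked the reflection identity to check continuity across the positive real axis. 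To complete your proof you would either have to carry out the level-curve/range-geometry verification you deferred, or replace it by an argument of this boundary-parametrization-plus-symmetry type.
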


\begin{proof}
Repeating the proof of Theorem \ref{Th:ConvComtetSigmaTau} under assumption $\lambda\in\C$
we come to an equation which is different from \eqref{ineq:trans} only by that $m\in\Z$.
Then substituting \eqref{st:complex} in there we obtain (cf. \eqref{eq:Remark2.2})
\begin{equation} \label{ineq:Z}
\min_{m\in\Z}\Re W_m\left(-\frac{\ln z}{e}\right)>-1 \ .
\end{equation}
Now we cut the complex $z$-plane along the negative real axis and set $\arg z\in(-\pi,\pi]$.
We consider inequality \eqref{ineq:Z} in domain $D=\left\{z\in\C|\,-\pi<\arg z\leq0\right\}$
and assume that there exists some value $m=q$ such that
the domain of convergence in $D$ is defined by equation
\begin{equation} \label{eq:D}
\Re W_q\left(-\ln z/e\right)>-1
\end{equation}
and its continuous boundary $\mathscr{L}$ is given by
\begin{equation} \label{eq:Z}
\Re W_q\left(-\ln z/e\right)=-1 \ .
\end{equation}
The domain of convergence found in real case is defined in a similar way.
Specifically, in domain $\left\{z\in\R|\,z>0\right\}$ we have
$q=-1$ and the boundary $z=e$ (see Corollary \ref{cor:x>e}).
We require that in the limiting case $\arg z\rightarrow0-$
equation \eqref{eq:D} become equation \eqref{eq:Remark2.2} and
show that there is an unique value $q=-1$ satisfying this requirement.
(If there were several such values of $q$, it would mean that
the boundary $\mathscr{L}$ is composed of several pieces of different curves,
and to identify them one should reduce domain $D$, i.e. consider its subdomains.)
Separating the real and imaginary parts of $W_q\left(-\ln z/e\right)=-1+i\eta$ in the usual way, we find
\begin{subequations}
\begin{equation} \label{eq:argZ}
\sin\eta-\eta\cos\eta=\arg z
\end{equation}
\begin{equation} \label{eq:lnZ}
\cos\eta+\eta\sin\eta=\ln\left|z\right|
\end{equation}
\end{subequations}
These equations describe a set of the boundary points which can be found in the following way.
Given a value for $\arg z$ one can find $\eta$ from \eqref{eq:argZ}
which being substituted in \eqref{eq:lnZ} yields the corresponding value of $\ln\left|z\right|$.
However, for fixed $\arg z\in(-\pi,0]$ the equation \eqref{eq:argZ} has
an infinite number of solutions.
We select a solution to provide a continuous transition to the real case when $\arg z\rightarrow0-$
and when the boundary of the domain of convergence is defined by $\Re W_{-1}(-\ln z/e)=-1$ (cf. \eqref{eq:Remark2.2}).
An elementary analysis of the equation \eqref{eq:argZ} shows that to meet these requirements
one needs to choose a solution  of this equation from the interval $\eta\in(-\pi,0]$
and set $q=-1$ in \eqref{eq:Z}.
Since by \eqref{eq:argZ} such solution exists if and only if $z\in D$, the above assumption is approved
and the domain of convergence in $D$ is described by \eqref{eq:D} with $q=-1$, i.e.
\begin{equation}
\Re W_{-1}(-\ln z/e)>-1 \ . \notag
\end{equation}
Due to the near conjugate symmetry property of $W$ function \cite{Corless1996Lam}, i.e. $W_k(z)=\overline{W_{-k}(\bar{z})}$
when $z$ is not on the branch cut, we obtain the convergence condition
$\Re W_1(-\ln z/e)>-1$ in the domain $\left\{z\in\C|\,0<\arg z\leq\pi\right\}$.
Thus the theorem is completely proved.
\end{proof}

\begin{remark}\label{Remark4.1}
The 'branch splitting' in the proved formula \eqref{eq:ConvComtetComplex} is due to
the branch choices for the Lambert $W$ function and similar to the effect that occurs in the series
for $W$ about the branch point \cite[Sec.\,3]{Corless1997seqseries}.
\end{remark}

\begin{remark}\label{Remark4.2}
The inequality opposite to \eqref{eq:ConvComtetComplex} defines the domain
where the series \eqref{eq:Comtet} is divergent. This domain is finite (it encloses the origin $z=0$)
and contains a subdomain defined by inequality $\left|\sigma\right|>1$.
Therefore, unlike the real case (see Corollary \ref{cor:x>e}) in the complex case
the condition $\left|\sigma\right|<1$ is only necessary but not sufficient
for convergence of the series \eqref{eq:Comtet}.
\end{remark}

\section{Series \eqref{eq:improvedStir}}
\subsection{Convergence in real case} \label{RealSigma}
We regard the expansion \eqref{eq:improvedStir} as a power series around $\tau = 0$ where variable $\sigma$ plays a role of a parameter.

\begin{Th} \label{Th:ConvReal}
For $\alpha > 0$ and $\sigma > 0$, the radius of convergence of the series \eqref{eq:improvedStir} is exactly
\begin{equation} \label{ThImprovedReal1}
\tau_\ast(\sigma)=\left|1+\sigma-\sigma\ln \sigma\pm i\pi\sigma\right|.
\end{equation}
which is equiavalent to the condition of convergence of the series \eqref{eq:improvedStir} as
\begin{equation} \label{ThImprovedReal2}
\left|\sigma(\ln\alpha-\ln\sigma)\right|<\sqrt{(1+\sigma-\sigma\ln\sigma)^2 + \pi^2\sigma^2}.
\end{equation}
\end{Th}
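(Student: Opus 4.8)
The plan is to mirror the argument used for the first series (Theorems~\ref{Th:ConvComtetSigmaTau} and \ref{Th:ConvComtetReal}), but now treating \eqref{eq:improvedStir} as a power series in $\tau$ with $\sigma$ as a parameter, and working with the fundamental relation \eqref{eq:basic} rewritten in terms of $\zeta=1/(1+\sigma)$. First I would recast \eqref{eq:basic} as $H_\sigma(\tau,u)=1-e^{-u}+\sigma u-\tau=0$, which for fixed real $\sigma>0$ defines, by the implicit function theorem, an analytic function $u(\tau)=\sum_m c_m(\sigma)\tau^m$ with $u(0)=0$, valid so long as $\partial H_\sigma/\partial u=e^{-u}+\sigma\neq 0$. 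The radius of convergence in the complex $\tau$-plane is then the distance from the origin to the nearest singular point, and (as in the earlier proof) the relevant singularities are the critical points where $e^{-u}+\sigma=0$, i.e.\ $e^{-u}=-\sigma$, so $u=-\ln\sigma+ i\pi(2k+1)$ for $k\in\Z$.

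Next I would solve for the corresponding values of $\tau$ at these critical points. From $1-e^{-u}+\sigma u-\tau=0$ and $e^{-u}=-\sigma$ we get $\tau=1+\sigma+\sigma u=1+\sigma+\sigma(-\ln\sigma+i\pi(2k+1))=1+\sigma-\sigma\ln\sigma+i\pi(2k+1)\sigma$. The closest of these to the origin is obtained with $2k+1=\pm1$, giving $\tau=1+\sigma-\sigma\ln\sigma\pm i\pi\sigma$, and hence the radius of convergence is $\tau_\ast(\sigma)=\left|1+\sigma-\sigma\ln\sigma\pm i\pi\sigma\right|=\sqrt{(1+\sigma-\sigma\ln\sigma)^2+\pi^2\sigma^2}$, which is \eqref{ThImprovedReal1}. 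One subtlety to check here is the branch restriction: as in the first proof, the relation forces $\Im u$ to be controlled (the principal-branch construction keeps us on the sheet with $\Im u\in(-\pi,\pi)$ near the origin, and the first singularity encountered as $|\tau|$ grows is exactly the one with $2k+1=\pm1$), so the two values $\pm i\pi\sigma$ are indeed the dominant ones; I would spell this out carefully since it is the one place the argument could be questioned.

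Finally, to obtain \eqref{ThImprovedReal2} I would simply substitute the definitions \eqref{eq:st}, namely $\sigma=\alpha/\ln x$ and $\tau=\alpha\ln\ln x/\ln x$, noting that $\tau/\sigma=\ln\ln x$ and hence $\sigma\ln(\ln x)=\tau$; combined with $\ln x=\alpha/\sigma$ one gets $\tau=\sigma\ln(\alpha/\sigma)=\sigma(\ln\alpha-\ln\sigma)$, so the convergence condition $|\tau|<\tau_\ast(\sigma)$ becomes exactly $\left|\sigma(\ln\alpha-\ln\sigma)\right|<\sqrt{(1+\sigma-\sigma\ln\sigma)^2+\pi^2\sigma^2}$. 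The absolute value on the left is needed because $\ln\ln x$ can be negative (for $1<x<e$). I expect the main obstacle to be the justification that no singularity on an intermediate sheet of $u$ intrudes before the ones identified above — i.e.\ rigorously pinning down which critical point is dominant — rather than any of the algebra, which is routine once the critical-point equations are in hand.
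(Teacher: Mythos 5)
Your outline coincides with the paper's own proof of Theorem~\ref{Th:ConvReal}: the same implicit-function set-up $F_\sigma(\tau,u)=1-e^{-u}+\sigma u-\tau=0$ with $u_\sigma(0)=0$, the same critical-point equations $e^{-u_\ast}=-\sigma$ and $\tau_\ast^{(k)}=1+\sigma-\sigma\ln\sigma+i\pi\sigma(2k-1)$ (your index is shifted, harmlessly), the same selection of the two equidistant nearest values, and the same substitution $\tau=\sigma(\ln\alpha-\ln\sigma)$ to pass from \eqref{ThImprovedReal1} to \eqref{ThImprovedReal2}. All of that algebra is correct.

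The genuine gap is the step you twice postpone (``I would spell this out carefully'', ``the main obstacle \dots rather than any of the algebra''): you never prove that the two nearest critical values are actually singular points of the branch $u_\sigma(\tau)$ fixed by $u_\sigma(0)=0$. Vanishing of $\partial F_\sigma/\partial u$ only flags a candidate; a priori the branch through the origin could lie over $\tau_\ast^{(0)}$, $\tau_\ast^{(1)}$ at regular values of $u$ and continue analytically past them, in which case the radius of convergence would be strictly larger and the claim that \eqref{ThImprovedReal1} is \emph{exactly} the radius would be lost. This is where the paper spends most of its proof: it observes that the coefficients $c_m(\sigma)$ are real, so the nearest singularities must occur as a conjugate pair, and then applies the Weierstrass preparation theorem at $S\bigl(\tau_\ast^{(0)},u_\ast^{(0)}\bigr)$, where $\partial F_\sigma(S)/\partial u=0$, $\partial^2 F_\sigma(S)/\partial u^2=-e^{-u_\ast^{(0)}}=\sigma\neq0$ and $\partial F_\sigma/\partial\tau\equiv-1$, to conclude that locally $\tau_\ast^{(0)}-\tau\sim\sigma\bigl(u-u_\ast^{(0)}\bigr)^2/2$, i.e.\ a square-root branch point; it then checks which sign of the square root satisfies $-\pi<\Im u<0$ and matches the principal branch of $W$, confirming that it is precisely the branch generated by the series that terminates there (the resulting local forms \eqref{behsing} are reused afterwards for the asymptotics of the coefficients). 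Note also that your stated worry --- that a singularity on an ``intermediate sheet'' might intrude closer to the origin --- is not where the difficulty lies: all the $\tau_\ast^{(k)}$ share the real part $1+\sigma-\sigma\ln\sigma$ and differ only by $i\pi\sigma(2k-1)$, so the $k=0,1$ pair is trivially closest among the candidates; what needs an argument is that these candidates are singularities of your series at all, and that is the Weierstrass-preparation step (or, equivalently, identifying them with the branch point $W=-1$, cf.\ Remark~\ref{Remark5.1}) that your proposal leaves unexecuted.
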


\begin{proof}
We rewrite the fundamental relation (\ref{eq:basic}) in the form of equation
\begin{equation} \label{ThImprovedReal3}
F_\sigma(\tau,u)=0,
\end{equation}
where
\begin{equation}\label{ThImprovedReal4}
F_\sigma(\tau,u) = 1 - e^{-u} + \sigma u - \tau \ ,
\end{equation}
and analyse this equation similarly to that in the proof of Theorem \ref{Th:ConvComtetSigmaTau}.
By Implicit Function Theorem \cite{Markushevich} the equation \eqref{ThImprovedReal3} determines
a function $u_\sigma(\tau)=\sum_{m}c_m(\sigma)\tau^m$ with initial condition
$u_\sigma(0)=0$ in a domain where $\partial F_\sigma(\tau,u)/\partial u = e^{-u} + \sigma \neq 0$.
The initial condition is justified by $\partial F_\sigma(0,0)/\partial u = 1+\sigma \neq0$.
Since the critical points are defined by the same equation as in Theorem \ref{Th:ConvComtetSigmaTau},
they are given by \eqref{u:crit}
and the corresponding values of $\tau$ are
\begin{equation}\label{ThImprovedReal5}
 \tau_\ast^{(k)}=1 - e^{-u_\ast^{(k)}} + \sigma u_\ast^{(k)} = 1 + \sigma -\sigma \ln \sigma + i\pi\sigma(2k-1), k\in\Z  
\end{equation}

The radius of convergence is equal to the distance from the origin in the complex $\tau$-plane to the closest singular point
\cite[Theorem 4.3.2]{Antimirov et. al}. Among the critical points \eqref{ThImprovedReal5}
there are two the nearest to the origin equidistant points which correspond to $k=0$ and $k=1$:
\begin{subequations} \label{singtau}
\begin{equation}\label{singtau0}
 \tau_\ast^{(0)} = 1 + \sigma -\sigma \ln \sigma - i\pi\sigma,
\end{equation}
\begin{equation}\label{singtau1}
 \tau_\ast^{(1)} = 1 + \sigma -\sigma \ln \sigma + i\pi\sigma.
\end{equation}
\end{subequations}
The corresponding values of $u_\ast^{(k)}$ are
\begin{subequations} \label{singu}
\begin{equation}\label{singu0}
 u_\ast^{(0)} = -\ln \sigma - i\pi,
\end{equation}
\begin{equation}\label{singu1}
 u_\ast^{(1)} = -\ln \sigma + i\pi.
\end{equation}
\end{subequations}

Since the expansion coefficients of the series \eqref{eq:improvedStir} are real, the closest singularities can appear as a conjugate pair only \cite{Hunter&Guerrieri}. Based on the Weierstrass's preparation theorem \cite{{Markushevich}, {Adachi}} we will show that the points \eqref{singtau}  are singular, each corresponding to a square-root branch point of function $u=u_\sigma(\tau)$ in the complex $\tau$-plane. We will also find a behavior of function $u=u_\sigma(\tau)$ near the points \eqref{singtau} used then for a study of an asymptotic behaviour of the expansion coefficients of the series \eqref{eq:improvedStir}.

Let us consider, for example, point $\tau=\tau_\ast^{(0)}$. Expanding the left-hand side of equation \eqref{ThImprovedReal3} into a Taylor series near the point $S\left( \tau_\ast^{(0)},u_\ast^{(0)} \right)$ we obtain
$$
F_\sigma(S)+\frac{\partial F_\sigma(S)}{\partial \tau} \left( \tau-\tau_\ast^{(0)} \right) + \frac{\partial F_\sigma(S)}{\partial u} \left( u-u_\ast^{(0)} \right) + \frac{\partial^2 F_\sigma(S)}{\partial {\tau}^2}\frac{ \left( \tau-\tau_\ast^{(0)} \right) ^2 }{2}
$$
$$
 + \frac{\partial^2 F_\sigma(S)}{\partial \tau \partial u} \left( \tau-\tau_\ast^{(0)} \right) \left( u-u_\ast^{(0)} \right) + \frac{\partial^2 F_\sigma(S)}{\partial u^2}\frac{ \left( u-u_\ast^{(0)} \right) ^2 }{2} + \dots = 0,
$$

where dots denote the skipped terms of the higher order.
Since

$$
F_\sigma(S)=0, \frac{\partial F_\sigma(S)}{\partial u}=0, \frac{\partial^2 F_\sigma(S)}{\partial u^2}=-\exp\left(-u_\ast^{(0)}\right), \mbox{ and   } \frac{\partial F_\sigma}{\partial \tau}\equiv-1
$$\\
the last equation becomes
$$
-\left( \tau-\tau_\ast^{(0)} \right)-\exp\left(-u_\ast^{(0)}\right)\frac{ \left( u-u_\ast^{(0)} \right) ^2 }{2} + \dots = 0.
$$
\\
Thus, in accordance with the Weierstrass's preparation theorem \cite[p.111]{Markushevich}, equation \eqref{ThImprovedReal3} is locally equivalent to the equation
$$
\left( \tau_\ast^{(0)}-\tau \right) \sim \exp\left(-u_\ast^{(0)}\right)\frac{ \left( u-u_\ast^{(0)} \right) ^2 }{2}.
$$
It follows that at $\tau=\tau_\ast^{(0)}$ function $u=u_\sigma(\tau)$ has a singularity corresponding to a square-root branch point as near this point
$$
 u \sim u_\ast^{(0)} \pm \exp\left(\frac{u_\ast^{(0)}}{2}\right) \sqrt{2 \left( \tau_\ast^{(0)}-\tau \right)}
$$
or substituting \eqref{singu0}
\begin{equation}\label{u+-}
u \sim -\ln\sigma-i\pi\pm i\sqrt{\frac{2\tau_\ast^{(0)}}{\sigma}} \left( 1-\frac{\tau}{\tau_\ast^{(0)}} \right)^\frac{1}{2}.
\end{equation}
It is not difficult to show that if we consider the values of the function \eqref{u+-}
in the interior of the circle of radius \eqref{ThImprovedReal1} remaining in the vicinity
of $\tau=\tau_\ast^{(0)}$ then the function \eqref{u+-} taken with the plus sign only
satisfies the condition $-\pi<\Im u<0$,
which corresponds to $\Im u_\ast^{(0)}=-\pi<0$ at point $\tau=\tau_\ast^{(0)}$ itself by \eqref{singu0}.
Moreover, since in the mentioned vicinity $-\pi<\Im\tau/\sigma<0$,
we have $-\pi<\Im W<\pi$,
which corresponds to the principal branch of $W$ function \cite{Corless1996Lam}.
Thus we come to conclusion that the function $u=u_\sigma(\tau)$
behaves near the singularity \eqref{singtau0} like
\begin{subequations}\label{behsing}
\begin{equation}
u \sim -\ln\sigma-i\pi + i\sqrt{\frac{2\tau_\ast^{(0)}}{\sigma}} {\left(1-\frac{\tau}{\tau_\ast^{(0)}}\right)}^\frac{1}{2} \mbox{ as }    \tau\rightarrow\tau_\ast^{(0)}.
\end{equation}
One can show in a similar way that near the singularity \eqref{singtau1} the function $u=u_\sigma(\tau)$ behaves like
\begin{equation}
u \sim -\ln\sigma+i\pi - i\sqrt{\frac{2\tau_\ast^{(1)}}{\sigma}} {\left(1-\frac{\tau}{\tau_\ast^{(1)}}\right)}^\frac{1}{2} \mbox{ as }    \tau\rightarrow\tau_\ast^{(1)}.
\end{equation}
\end{subequations}
Thus the points \eqref{singtau} are singular and we immediately obtain expression \eqref{ThImprovedReal1}; the inequality \eqref{ThImprovedReal2} follows from \eqref{ThImprovedReal1} as $\tau=-\sigma(\ln\sigma-\ln\alpha)$ due to (\ref{eq:st}).
The theorem is completely proved.
\end{proof}

\begin{remark} \label{Remark5.1}
From the values \eqref{singtau} and \eqref{singu}, we find $W(x)=-1$ for both $k=0$ and $k=1$. Although it is well-known that this value of the Lambert $W$ function corresponds to its branch point and asymptotics \eqref{behsing} can be obtained immediately from the results in \cite{{Corless1996Lam},{Corless1997seqseries}}, we derived these asymptotic formulae to demonstrate a method based on the Weierstrass's preparation theorem.
\end{remark}

The inequality \eqref{ThImprovedReal2} can be written in the form $\left|\ln\alpha-\ln\sigma\right|<g(\sigma)$, where
\begin{equation}\label{eq:g(sigma)}
g(\sigma)=\sqrt{\pi^2+(1+1/\sigma-\ln\sigma)^2} \ ,
\end{equation}
and solved with respect to $\alpha$. Then the following theorem follows.

\begin{Th}\label{th:AlphaDomains}
Let $\sigma_c=1.059945...$ is the unique root of equation $g(\sigma)\sqrt{(1+1/\sigma)^2-1}/(1+1/\sigma)=\pi$
and $\alpha_c=\sigma_c\exp(g(\sigma_c))=41.349171...$, where function $g(\sigma)$ is defined by \eqref{eq:g(sigma)}.
Then the domain of convergence of series \eqref{eq:improvedStir} depending on $\alpha$ is defined as follows.
\begin{enumerate}
\renewcommand{\labelenumi}{ \upshape{(\roman{enumi})} }
\item For $0<\alpha<e$, the series \eqref{eq:improvedStir} is convergent when $0<\sigma<\sigma_\alpha$,
where $\sigma_\alpha$ is the only root of the equation
\begin{equation} \label{eq:SigmaAlpha}
 \ln\sigma - g(\sigma) = \ln\alpha,
\end{equation}
which is equivalent to
\begin{equation} \label{eq:alpha}
x>x_\alpha=e^{\alpha/\sigma_\alpha} \ .
\end{equation}
The series is divergent when $\sigma>\sigma_\alpha$ or $1<x<x_\alpha$.
\item For $e<\alpha<\alpha_c$, the series \eqref{eq:improvedStir} is convergent for any $\sigma>0$ or for any $x>1$.
\item For $\alpha>\alpha_c$, the series \eqref{eq:improvedStir} is convergent when $\sigma<\mu_\alpha$ or $\sigma>\nu_\alpha$,
where $\mu_\alpha$ and $\nu_\alpha$ are roots of equation $\left|\ln\sigma-\ln\alpha\right|=g(\sigma)$,
which is equivalent to $x>e^{\alpha/\mu_\alpha}$ or $x<e^{\alpha/\nu_\alpha}$. The series is divergent
when $\mu_\alpha<\sigma<\nu_\alpha$ or $e^{\alpha/\nu_\alpha}<x<e^{\alpha/\mu_\alpha}$.
\end{enumerate}
\end{Th}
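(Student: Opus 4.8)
\emph{Proof strategy.} The plan is to recast the criterion as a statement about two one-variable profile functions and then to read off the three regimes. By Theorem~\ref{Th:ConvReal} --- together with the reformulation of \eqref{ThImprovedReal2} as $|\ln\alpha-\ln\sigma|<g(\sigma)$ noted just above --- for $\alpha>0$ and $\sigma>0$ the series \eqref{eq:improvedStir} converges exactly when
\[
h_-(\sigma)<\ln\alpha<h_+(\sigma),\qquad h_\pm(\sigma):=\ln\sigma\pm g(\sigma),
\]
with $g$ as in \eqref{eq:g(sigma)}, and diverges whenever $|\ln\alpha-\ln\sigma|>g(\sigma)$ (the boundary case $|\ln\alpha-\ln\sigma|=g(\sigma)$, where $\tau$ sits on the circle of convergence, being excluded by the strict inequalities in the statement). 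So it suffices to describe $R_\alpha=\{\sigma>0:h_-(\sigma)<\ln\alpha<h_+(\sigma)\}$ and then to transcribe it in $x$ via $\sigma=\alpha/\ln x$ from \eqref{eq:st}, using $\sigma<c\Leftrightarrow x>e^{\alpha/c}$ and $\sigma>c\Leftrightarrow 1<x<e^{\alpha/c}$ (here $x>1$, so $\sigma>0$).

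Put $A(\sigma)=1+1/\sigma-\ln\sigma$, so that $g=\sqrt{\pi^2+A^2}$ is smooth, $A'(\sigma)=-(1+\sigma)/\sigma^2<0$, and $A$ is strictly decreasing with a unique zero $\sigma_0$; moreover $\sigma_0>e$ because $A(e)=1/e>0$. A short computation gives $h_\pm'(\sigma)=1/\sigma\mp A(1+\sigma)/(\sigma^2 g)$. For $h_-$: if $A\ge0$ then $h_-'>0$ at once; if $A<0$ (that is, $\sigma>\sigma_0$), then $h_-'>0$ is equivalent to $A^2((1+1/\sigma)^2-1)<\pi^2$, which I will obtain from the crude bound $|A|\sqrt{(1+1/\sigma)^2-1}=|A|\sqrt{2\sigma+1}/\sigma<\ln\sigma\cdot\sqrt{3/\sigma}\le 2\sqrt3/e<\pi$ (valid for $\sigma>\sigma_0>1$, using $2\sigma+1\le 3\sigma$, $|A|<\ln\sigma$, and $\max_{\sigma>0}\ln\sigma/\sqrt\sigma=2/e$). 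Hence $h_-$ is strictly increasing; since also $h_-(\sigma)\le 2\ln\sigma-1-1/\sigma\to-\infty$ as $\sigma\to0^+$, while $g-|A|=\pi^2/(g+|A|)\to 0$ yields $h_-(\sigma)=1+1/\sigma-(g-|A|)\to1$ as $\sigma\to\infty$, the function $h_-$ is a strictly increasing bijection of $(0,\infty)$ onto $(-\infty,1)$. Consequently: for $0<\alpha<e$ there is a unique $\sigma_\alpha$ with $h_-(\sigma_\alpha)=\ln\alpha$ --- precisely equation \eqref{eq:SigmaAlpha} --- and $\{\sigma:h_-(\sigma)<\ln\alpha\}=(0,\sigma_\alpha)$; while for $\alpha\ge e$ one has $h_-(\sigma)<\ln\alpha$ for all $\sigma>0$.

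For $h_+$, set $\phi(\sigma)=A(\sigma)^2((1+1/\sigma)^2-1)$; then $h_+'(\sigma)>0\Leftrightarrow A\le0$ or $\phi(\sigma)<\pi^2$, and $h_+'(\sigma)<0\Leftrightarrow A>0$ and $\phi(\sigma)>\pi^2$. On $(0,\sigma_0)$ the function $\phi$ is the product of two positive, strictly decreasing functions, hence strictly decreasing from $+\infty$ (at $0^+$) to $0$ (at $\sigma_0^-$), so $\phi=\pi^2$ at exactly one point; and the elementary identity $g^2((1+1/\sigma)^2-1)=\pi^2(1+1/\sigma)^2\Leftrightarrow\phi(\sigma)=\pi^2$ identifies that point with the root $\sigma_c$ of the equation in the statement, which is therefore unique overall (since $\phi<\pi^2$ on $[\sigma_0,\infty)$ by the bound above). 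Hence $h_+$ strictly decreases on $(0,\sigma_c]$ and strictly increases on $[\sigma_c,\infty)$, with global minimum $h_+(\sigma_c)=\ln\sigma_c+g(\sigma_c)=\ln\alpha_c$ and $h_+\to+\infty$ at both ends; moreover $h_+(\sigma)>1$ for every $\sigma>0$ (on $(0,\sigma_0]$: $h_+\ge\ln\sigma+A=1+1/\sigma>1$; on $(\sigma_0,\infty)$: $h_+\ge 2\ln\sigma-1-1/\sigma>\ln\sigma>\ln\sigma_0>1$), so $\alpha_c>e$. It follows that $\{\sigma:h_+(\sigma)>\ln\alpha\}$ equals $(0,\infty)$ when $\alpha<\alpha_c$, and equals $(0,\mu_\alpha)\cup(\nu_\alpha,\infty)$ when $\alpha>\alpha_c$, where $\mu_\alpha<\sigma_c<\nu_\alpha$ are the two solutions of $h_+(\sigma)=\ln\alpha$; since $g>0$ forces $\sigma<\alpha$ there, these are precisely the two roots of $|\ln\sigma-\ln\alpha|=g(\sigma)$ --- the branch $\ln\sigma-\ln\alpha=g(\sigma)$ contributing nothing, as $h_-<1<\ln\alpha_c<\ln\alpha$.

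Intersecting the two descriptions gives $R_\alpha=(0,\sigma_\alpha)$ for $0<\alpha<e$; $R_\alpha=(0,\infty)$ for $e\le\alpha<\alpha_c$ (in particular for $e<\alpha<\alpha_c$); and $R_\alpha=(0,\mu_\alpha)\cup(\nu_\alpha,\infty)$ for $\alpha>\alpha_c$. On the complementary $\sigma$-sets --- $\sigma>\sigma_\alpha$ in case (i), $\mu_\alpha<\sigma<\nu_\alpha$ in case (iii) --- one has $|\ln\alpha-\ln\sigma|>g(\sigma)$, i.e. $|\tau|>\tau_\ast(\sigma)$, so the series diverges by Theorem~\ref{Th:ConvReal}. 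Passing through $\sigma=\alpha/\ln x$ converts $\sigma<\sigma_\alpha$ into $x>e^{\alpha/\sigma_\alpha}$ (equation \eqref{eq:alpha}) and the divergence set into $1<x<x_\alpha$, and likewise produces the $x$-forms in (ii) and (iii); the endpoints $\alpha=e$ and $\alpha=\alpha_c$, not covered by the statement, are genuine boundary cases ($\alpha=e$ behaves as in (ii); at $\alpha=\alpha_c$ one has $|\tau|=\tau_\ast(\sigma)$ exactly at $\sigma=\sigma_c$). The only genuinely delicate step --- and the one I expect to cost the most --- is proving the monotonicity of $h_-$ and the unimodality of $h_+$, i.e. pinning down the sign of $h_\pm'$ by comparing $A^2((1+1/\sigma)^2-1)$ with $\pi^2$; once the estimate $2\sqrt3/e<\pi$ and the monotonicity of $A$ and of $(1+1/\sigma)^2-1$ are in hand, everything else is bookkeeping.
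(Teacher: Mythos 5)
Your proposal is correct and takes essentially the same route as the paper: it reduces the criterion of Theorem~\ref{Th:ConvReal} to $|\ln\alpha-\ln\sigma|<g(\sigma)$ and then analyzes the profile functions $\ln\sigma\mp g(\sigma)$ in $\sigma$, which is precisely the paper's (much terser) argument, given there in detail only for part (i). You additionally supply the details the paper omits --- the proof that $\ln\sigma-g(\sigma)$ is strictly increasing onto $(-\infty,1)$, the unimodality of $\ln\sigma+g(\sigma)$, and the identification of $\sigma_c$ via $A^2\bigl((1+1/\sigma)^2-1\bigr)=\pi^2$ --- so the argument is complete.
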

\begin{proof}
We give details only for the first part (i).
In a particular case $\alpha<\sigma$, equation \eqref{ThImprovedReal1}
can be written as \eqref{eq:SigmaAlpha} with $g(\sigma)$ defined by \eqref{eq:g(sigma)}.
The left-hand side of equation \eqref{eq:SigmaAlpha}, being a monotone increasing function for positive $\sigma$,
goes to $-\infty$ and 1 when $\sigma$ tends to 0 and $\infty$ respectively.
Therefore, for $0<\alpha<e$ the equation has the unique solution.
Applying Theorem~\ref{Th:ConvReal} the theorem follows.
\end{proof}

\begin{corollary} \label{cor:5.1}
The series \eqref{eq:improvedStir} for $W$ function is convergent for $0<\sigma<\sigma_1$,
where $\sigma_1 = 224.790951...$ is the only root of the equation
\begin{equation}\label{eq:realsigma}
\left|-\sigma\ln \sigma\right|=\sqrt{(1+\sigma-\sigma\ln \sigma)^2 + \pi^2\sigma^2},
\end{equation}
which is equivalent to
\begin{equation}\label{eq:x}
x>x_1=e^{1/\sigma_1}=1.004458... \ .
\end{equation}
The series \eqref{eq:improvedStir} is divergent for $\sigma>\sigma_1$ or for $1<x<x_1$.
\end{corollary}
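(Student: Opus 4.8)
The plan is to derive this as the special case $\alpha=1$ of Theorem~\ref{th:AlphaDomains}(i), which applies since $1<e$, and then to reconcile the two forms of the boundary equation and evaluate the constants. First I would record, from \eqref{eq:st}, that $\sigma=\alpha/\ln x$ and $\tau=\alpha\,\ln\ln x/\ln x$ give $\ln x=\alpha/\sigma$, $\ln\ln x=\ln\alpha-\ln\sigma$, hence $\tau=\sigma(\ln\alpha-\ln\sigma)$, which for $\alpha=1$ becomes $\tau=-\sigma\ln\sigma$. Substituting $\alpha=1$ into the convergence criterion $|\tau|<\tau_\ast(\sigma)$ of Theorem~\ref{Th:ConvReal}, with $\tau_\ast(\sigma)$ as in \eqref{ThImprovedReal1}, turns it into the strict-inequality version of \eqref{eq:realsigma}, so the series converges for exactly those $\sigma>0$ satisfying $|-\sigma\ln\sigma|<\sqrt{(1+\sigma-\sigma\ln\sigma)^2+\pi^2\sigma^2}$ and diverges for the reversed inequality.

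Next I would check that the boundary equation \eqref{eq:realsigma} has a unique positive root. Dividing by $\sigma>0$ rewrites it as $|\ln\sigma|=g(\sigma)$ with $g$ as in \eqref{eq:g(sigma)}. Since $g(\sigma)\ge\pi>0$, and since for $0<\sigma\le1$ one has $g(\sigma)\ge 1+1/\sigma-\ln\sigma>-\ln\sigma=|\ln\sigma|$ while for $\sigma>1$ one has $|\ln\sigma|=\ln\sigma$, the branch $\ln\sigma=-g(\sigma)$ contributes no solution and $|\ln\sigma|=g(\sigma)$ is equivalent to $\ln\sigma=g(\sigma)$, i.e.\ to \eqref{eq:SigmaAlpha} with $\alpha=1$. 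By the monotonicity established in the proof of Theorem~\ref{th:AlphaDomains}, $\ln\sigma-g(\sigma)$ increases from $-\infty$ to $1$ on $(0,\infty)$, so it vanishes at a unique $\sigma_1$; thus the series converges for $0<\sigma<\sigma_1$ and diverges for $\sigma>\sigma_1$.

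Finally I would pass to the variable $x$. With $\alpha=1$, $\sigma=1/\ln x$, so $\sigma<\sigma_1$ is equivalent to $\ln x>1/\sigma_1$, i.e.\ $x>e^{1/\sigma_1}=x_1$; this also gives $x_1>1$, consistent with the standing assumption $x>1$ of Section~\ref{RealSigma}, and $\sigma>\sigma_1$ corresponds to $1<x<x_1$. The stated numerical values $\sigma_1=224.790951\ldots$ and $x_1=1.004458\ldots$ follow by solving \eqref{eq:realsigma} numerically and substituting.

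The argument is essentially routine given the earlier theorems; the only step that needs a moment's attention is confirming that the absolute value on the left of \eqref{eq:realsigma} does not produce a second, spurious root, which is precisely the estimate $g(\sigma)>-\ln\sigma$ for $0<\sigma\le1$ noted above, after which uniqueness is inherited directly from Theorem~\ref{th:AlphaDomains}.
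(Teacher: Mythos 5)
Your proposal is correct and follows exactly the paper's route: the paper's own proof is the one-line observation that the corollary follows from Theorem~\ref{th:AlphaDomains}(i) with $\alpha=1$, and you simply fill in the routine details (specializing \eqref{ThImprovedReal2}, discarding the spurious branch $\ln\sigma=-g(\sigma)$ of \eqref{eq:realsigma}, and translating $\sigma<\sigma_1$ into $x>e^{1/\sigma_1}$), all of which check out.
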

\begin{proof}
Follows from Theorem \ref{th:AlphaDomains}(i) for $\alpha=1$.
\end{proof}

\begin{remark} \label{rem:x>1.0044}
In terms of the variable $x$ the series \eqref{eq:improvedStir} for $W$ function is convergent
for $x>x_1>1$ rather than for $x>1$ though $x_1$ is very close to unit.
\end{remark}

\begin{remark} \label{rem:x=x1}
Elementary analysis of equation \eqref{eq:SigmaAlpha} with substituting $\sigma=\alpha/\ln x(\alpha)$ therein shows
that $x(\alpha)>1$ for $0<\alpha<e$ and has one maximum $e^{e^{-\pi}}=1.044161...$ at point $\alpha=e^{-\pi}/W(1/e)=0.155186...$.
This means the dependence $x(\alpha)$ to be a very weak and
$x_\alpha=x(\alpha)$ can be evaluated with a good precision (with the relative error less than 5\,$\%$) by a simple approximate equality $x_\alpha\approx x_1$ ($0<\alpha<e$) which becomes accurate when $\alpha=1$.
\end{remark}

\begin{remark} \label{Remark5.3}
The solution $\sigma=\sigma_1$ of the equation \eqref{eq:realsigma} is much more than unit and can be found approximately with a good precision. Specifically, taking square of the both sides of \eqref{eq:realsigma} and leaving the main terms we obtain $\sigma^2-2\sigma^2\ln\sigma + \pi^2\sigma^2 - 2\sigma\ln\sigma \approx 0$. Searching for a solution of the approximate equation in the form $\sigma=\exp(\frac{1+\pi^2}{2})(1+\delta)$, where the exponential factor is an exact solution of the approximate equation with neglected last term and a correction term $\delta$ is to be determined, we obtain an approximate value in deficit $\sigma_1\approx\exp(\frac{1+\pi^2}{2})-\frac{1+\pi^2}{2}=223.8126969...$.
Taking into consideration of the terms of higher powers in $\delta$ in a similar way, one can obtain a more accurate value.
\end{remark}

\begin{remark} \label{Remark5.2}
For fixed $\sigma>0$ and $\alpha=1$, the singular points \eqref{singtau} of function $u_\sigma(\tau)$ correspond to
those of the Wright function $\omega=\omega(s)$,
which are $s_\ast=\xi \pm i\pi$, where $\xi\leq-1$ (see subsection \ref{Wrightfunction}).
Indeed, we have $\tau_\ast=1-\sigma\ln\sigma-\sigma s_\ast$,
i.e. $\tau_\ast=1-\sigma\ln\sigma-\xi\sigma \mp i\pi\sigma$.
Since $\Re\tau_\ast$ has the minimum at $\xi=-1$, the closest singular points are defined by equations, which are exactly \eqref{singtau}.
\end{remark}

\begin{remark} \label{Remark5.4}
The convergence condition \eqref{ThImprovedReal2} has a clear geometrical interpretation in $(\sigma,\tau)$-plane. For example, for $\alpha=1$ one can show that in accordance with the inequality \eqref{ThImprovedReal2}, when $\sigma<\sigma_1$ the curve $L$ described by $\tau=-\sigma\ln\sigma$ is located inside the region $S$ bounded by curves $\tau=\pm\sqrt{(1+\sigma-\sigma\ln \sigma)^2 + \pi^2\sigma^2}$ , which expresses the condition of convergence of the series \eqref{eq:improvedStir}. However, at point $\sigma=\sigma_1$ the curve $L$ leaves the region $S$ through the lower boundary curve that can be described for large $\sigma$ by the asymptotic expression
\begin{equation}
\tau(\sigma)=-\sqrt{(1+\sigma-\sigma\ln \sigma)^2 + \pi^2\sigma^2}=-\sigma\ln \sigma + \sigma - \frac{1+\pi^2}{2} \frac{\sigma}{\ln \sigma} + 1 + O\left(\frac{1}{\ln  \sigma}\right). \notag
\end{equation}
It follows that afterwards the curve $L$ remains below the lower boundary of $S$, which corresponds to the divergence of the series \eqref{eq:improvedStir} for $\sigma>\sigma_1$.
\end{remark}

Now we consider case $\sigma<0$, which should be done carefully as by Implicit Function Theorem it should be $\partial F_\sigma(0,0)/\partial u \neq 0$ due to the initial condition $u_\sigma(0)=0$ and therefore the value $\sigma=-1$ should be excluded. It follows from \eqref{ThImprovedReal5} that when $\sigma<0$ and $\sigma \neq -1$, i.e. $\sigma=\left|\sigma\right| e^{i\pi}$ and $\left|\sigma\right| \neq 1$ there is only one the nearest to the origin singularity given by \eqref{singtau1}
\begin{equation} \label{case:sigma<0}
\tau_\ast^{(1)}=1+\sigma-\sigma\ln \left| \sigma \right|
\end{equation}
that lies on the positive real axis.
Correspondingly the radius of convergence instead of \eqref{ThImprovedReal1} is
the modulus of the right-hand side of \eqref{case:sigma<0}.

Finally, when $\sigma=0$ the series following from \eqref{eq:basic}
\begin{equation}\label{case:sigma=0}
u=-\ln(1-\tau)=\sum_{m=1}^\infty \frac{\tau^m}{m}
\end{equation}
is convergent for $\left|\tau\right|<1$.

\textit{Note}. When $\sigma=-1$, $\tau_\ast^{(1)}=0$ by \eqref{case:sigma<0}, i.e. the series diverges everywhere.
We also note that in all cases considered above the condition of convergence of the series \eqref{eq:improvedStir}
is described in an uniform manner, particularly, the radius of convergence is given by \eqref{ThImprovedReal1}.

\subsection{ Comparison with series \eqref{eq:Comtet} }
Let us compare the domain of convergence for the series \eqref{eq:Comtet} and \eqref{eq:improvedStir}. Both can be represented in the form
\begin{equation}\label{powerstau}
u=\sum_{m=1}^\infty c_m(\sigma)\tau^m
\end{equation}
(see \eqref{doublesum} for the series \eqref{eq:Comtet}). However, by Corollary \ref{cor:5.1} and Corollary \ref{cor:x>e} the series \eqref{eq:improvedStir} has a much wider domain of convergence than the series \eqref{eq:Comtet} (not only in the real case but also in the complex case, see Figure \ref{Fig:complex} below). To undestand this phenomenon we note that
both series \eqref{eq:Comtet} and \eqref{eq:improvedStir} are defined for $x>1$, therefore the closer the boundary of a domain of convergence to 1, the wider domain of convergence.
However, the expansion coefficients $c_m(\sigma)$ in the series \eqref{eq:Comtet} are given by power series near $\sigma=0$ whereas in the series \eqref{eq:improvedStir}, the expansion coefficients are defined through function $\zeta=\zeta(\sigma)$, i.e. $c_m(\sigma)=c_m(\zeta(\sigma))$, where $\zeta(\sigma)=1/(1+\sigma)$, and are given by power series near $\zeta=0$.
Since $\sigma=1/\ln x$ becomes larger as $x$ is approaching 1, the series \eqref{eq:improvedStir}  has a wider domain of convergence. This corresponds to the fact that the function $\zeta=\zeta(\sigma)$ maps the interior of the unit circle $\left|\sigma\right|=1$ into an unbounded domain which is the right half-plane $\R\zeta>1/2$. We also note that the series \eqref{eq:Comtet} and \eqref{eq:improvedStir} have common values in the domain where they are both convergent, therefore the series \eqref{eq:improvedStir} is an analytic continuation of the series \eqref{eq:Comtet}.

In terms of variable $\zeta$ the series \eqref{eq:improvedStir} becomes \cite{CRpaper}
\begin{equation} \label{dzetaStir2}
u=\sum_{m=1}^\infty \frac{\tau^m}{m!} \sum_{p=0}^{m-1}
\StirSubSet{p+m-1}{p}(-1)^{p+m-1}\zeta^{p+m}
\end{equation}
and can be regarded as a result of applying the Euler's transformation for improvement of convergence of series \cite{Hardy}. Indeed, the standard Euler's transformation associated with changing variable to extend a domain of convergence of the series \eqref{eq:Comtet} is $\rho=\sigma/(1+\sigma)$ \cite{Morse}. Since in terms of a new varibale the fundamental relation (\ref{eq:basic}) is written as
$$
1-\rho=\frac{u}{e^{-u}+u+\tau-1},
$$
it would be natural to introduce variable $\zeta=1-\rho=1/(1+\sigma)$ rather than $\rho$. The series \eqref{eq:improvedStir},\eqref{dzetaStir2} were first found in \cite{CRpaper}.

One can also show that a representation of $W$ function through the function $u_\tau(\sigma)=\sum_{m=1}^\infty c_m(\tau)\sigma^m$, where $\tau$ plays a role of parameter, can not extend the domain of convergence established for series \eqref{eq:improvedStir}. Indeed, in this case equation \eqref{ThImprovedReal3} changes to \mbox{$F_\tau(\sigma,u)=0$} where $F_\tau(\sigma,u)$ is still defined by the right-hand side of \eqref{ThImprovedReal4} but with initial condition $u_\tau(0)=-\ln(1-\tau)$. By the Implicit Function Theorem it should be \mbox{$\partial F_\tau(0,-\ln(1-\tau))/\partial u$} $\neq 0$, which gives $\tau\neq 1$, i.e. $\left|\tau\right|<1$, and substituting $\tau=-\sigma\ln \sigma$ yields $0<\sigma<1/\omega_0$ as a necessary condition for convergence (cf. $0<\sigma<\sigma_1$ in Corollary \ref{cor:5.1}).

Thus among the series with the considered  structures the series \eqref{eq:improvedStir} has as wide as possible domain of convergence.

\subsection{Asymptotics of expansion coefficients} \label{AsymptCoeff}
Once the behavior of function $u=u_\sigma(\tau)$ near the nearest to the origin singularities has been established one can find an asymptotic formula for the expansion coefficients of the series \eqref{eq:improvedStir} using the Darboux's theorem about expansions at algebraic singularities \cite{Comtet,Bender}. The similar approach, based on the Weierstrass's preparation theorem and the Darboux's theorem, was applied to asymptotic enumeration of trees in \cite{SavickyWoods}.

According to the Darboux's theorem and found estimates \eqref{behsing} for $\sigma>0$ the expansion coefficients in the series \eqref{powerstau}
have an asymptotic formula for large $m$ as
$$
c_m(\sigma)=\frac{1}{m}
\left(
i\sqrt{\frac{2\tau_\ast^{(0)}}{\sigma}}\frac{1}{\Gamma\left(-\frac{1}{2}\right)\left( \tau_\ast^{(0)} \right)^m m^\frac{1}{2}} - i\sqrt{\frac{2\tau_\ast^{(1)}}{\sigma}}\frac{1}{\Gamma\left(-\frac{1}{2}\right)\left(\tau_\ast^{(1)} \right)^m m^\frac{1}{2}}
\right)+o\left(\frac{1}{m^\frac{3}{2}\left|\tau_\ast^{(1)}\right|^m}\right)
$$
or
\begin{equation}\label{asympt1}
c_m(\sigma)\sim \frac{i}{\sqrt{2\pi\sigma}m^\frac{3}{2}}
\left(
\frac{1}{\left( \tau_\ast^{(1)} \right)^{m-\frac{1}{2}}}-\frac{1}{\left( \tau_\ast^{(0)} \right)^{m-\frac{1}{2}}}
\right),
\end{equation}
as $\Gamma\left( -\frac{1}{2} \right) = -2\sqrt{\pi}$. Setting $\tau_\ast^{(1)}=\left|\tau_\ast^{(1)} \right|e^{i\theta_1}$ we find
\begin{equation}\label{asympt2}
c_m(\sigma)\sim \sqrt{\frac{2}{\pi\sigma}} \frac{\sin\left(m-\frac{1}{2}\right)\theta_1}
{\tau_\ast^{m-\frac12} m^\frac{3}{2}}, \hspace{5mm} \mbox{ as } \hspace{2mm} m\rightarrow\infty
\end{equation}
where $\tau_\ast=\tau_\ast(\sigma)$ is defined by \eqref{ThImprovedReal1} and $\theta_1=\arg(1 + \sigma -\sigma \ln \sigma + i\pi\sigma)$.\\
Specifically, for $\sigma>0$
$$
\theta_1 =
\left\{
\begin{array}{cc}
 \arctan\displaystyle\frac{\pi}{1-\ln\sigma+1/\sigma},& \hspace{3mm} \mbox{if\hspace{2mm}$0<\sigma< \displaystyle \frac{1}{W\left(1/e\right)}$}, \\[10pt]
 \pi+\arctan\displaystyle\frac{\pi}{1-\ln\sigma+1/\sigma},& \hspace{-5mm} \mbox{if\hspace{2mm}$\sigma> \displaystyle \frac{1}{W\left(1/e\right)}$}.
\end{array}
\right.
$$

It follows from \eqref{asympt2} that for large $m$ the expansion coefficients in the series \eqref{eq:improvedStir} disclose their oscillatory behavior  due to {\sl sin} function though the amplitude decays as $\tau_\ast(\sigma)>1$ for any $\sigma>0$. Since the series \eqref{eq:improvedStir} can be interpreted as a result of applying the Euler's transformation to the series \eqref{eq:Comtet} (cf. \eqref{dzetaStir2}), we note that some cases of oscillatory coefficients resulting from the Euler's transformation are studied in \cite{Hunter}.

In order to find an asymptotic formula in case when $\sigma<0$, suffice it to take in \eqref{asympt1} only the first term with \eqref{case:sigma<0}
\begin{equation}\label{asympt3}
c_m(\sigma) \sim -\frac{1}{\sqrt{2\pi\left|\sigma\right|}m^{\frac{3}{2}} \left( 1-\left|\sigma\right| + \left|\sigma\right| \ln \left|\sigma\right|       \right)^{m-\frac{1}{2}}}  \hspace{5mm} \mbox{ as } \hspace{2mm} m\rightarrow\infty
\end{equation}

Finally, for case $\sigma=0$, it follows from \eqref{case:sigma=0} that for any $m\in\N$
\begin{equation}\label{asympt4}
c_m(0)=\frac{1}{m}.
\end{equation}

\subsection{Convergence in complex case}
Theorem \ref{Th:ConvReal} is extended to the complex case.
\begin{Th} \label{Th:ConvComplex}
For complex $\sigma$, the radius of convergence of the series \eqref{eq:improvedStir} for $\alpha=1$ is
\begin{subequations} \label{eqs:ComplexConv}
\begin{equation}
\tau_\ast(\sigma)=\left|1+\sigma-\sigma\ln \sigma - i\pi\sigma\right| \mbox{ when } \Im\sigma<0 \ ,
\end{equation}
\begin{equation}
\tau_\ast(\sigma)=\left|1+\sigma-\sigma\ln \sigma + i\pi\sigma\right| \mbox{ when } \Im\sigma>0 \ .
\end{equation}
\end{subequations}
In the complex $z$-plane this is equiavalent to
the series \eqref{eq:improvedStir} for $W$ function is convergent everywhere in the exterior of the boundary line defined by equation
\begin{equation} \label{complexz}
\left|-\sigma\ln\sigma\right|=\left|1+\sigma-\sigma\ln \sigma\pm i\pi\sigma\right| \ ,
\end{equation}
where $\sigma=1/\ln z$ and sign minus or plus is taken respectively in the upper or lower half-plane.
\end{Th}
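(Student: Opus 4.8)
The plan is to mirror the structure used for Theorem~\ref{Th:ConvReal} and Theorem~\ref{Th:ConvComtetComplex}: first re-examine the implicit-function construction of $u_\sigma(\tau)$ when $\sigma$ is allowed to be complex, locate the critical points, pick out the nearest one, and then translate the radius formula into a curve in the $z$-plane via $\sigma = 1/\ln z$. Concretely, I would start from equation \eqref{ThImprovedReal3}--\eqref{ThImprovedReal4} with $\sigma\in\C$. The computation of critical points is \emph{identical} to before since $\partial F_\sigma/\partial u = e^{-u}+\sigma$ does not care whether $\sigma$ is real: the critical points still satisfy \eqref{u:crit}, and the associated $\tau$-values are still
\begin{equation}
\tau_\ast^{(k)} = 1 + \sigma - \sigma\ln\sigma + i\pi\sigma(2k-1), \qquad k\in\Z, \notag
\end{equation}
where now $\ln\sigma$ means the principal logarithm. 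The only thing that changes is \emph{which} of these infinitely many points is closest to the origin.

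The first real step is therefore to determine, as a function of $\arg\sigma$, the minimizer of $|\tau_\ast^{(k)}|$. Writing $\tau_\ast^{(k)} = (1+\sigma-\sigma\ln\sigma) + i\pi\sigma(2k-1)$, the points $\{\tau_\ast^{(k)}\}_{k\in\Z}$ lie on an arithmetic progression in the complex plane with common difference $2\pi i\sigma$; geometrically they are equally spaced along the line through $1+\sigma-\sigma\ln\sigma$ in direction $i\sigma$. The closest point to the origin among such a lattice-on-a-line is the one minimizing $|a + (2k-1)d|$ with $a = 1+\sigma-\sigma\ln\sigma$ and $d = i\pi\sigma$; this is governed by the sign of $\Re(\bar d\, a)$. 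I expect that for $\Im\sigma<0$ the real-case reasoning of Remark~\ref{Remark5.2} carries over — the minimum of $\Re\tau_\ast$ along the relevant family is attained near $\xi=-1$, i.e. at $k=0$ — giving $\tau_\ast = |1+\sigma-\sigma\ln\sigma - i\pi\sigma|$, and symmetrically $k=1$ for $\Im\sigma>0$. One must also re-verify, exactly as in the $\Im u\in(-\pi,\pi)$ / principal-branch argument following \eqref{u+-}, that the branch point selected is genuinely on the sheet carrying the principal $W$, so that it is a true singularity of the function being expanded and not an artifact of another branch; this is where the Weierstrass-preparation local analysis of Theorem~\ref{Th:ConvReal} should be invoked again, noting $\Im(\tau/\sigma)$ has the right sign in the relevant half-plane.

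Once the radius formula \eqref{eqs:ComplexConv} is in hand, the second step is the $z$-plane reformulation. Setting $\alpha=1$ we have $\tau = -\sigma\ln\sigma$ (from $\tau = \sigma(\ln\alpha - \ln\sigma)$, using \eqref{eq:st} with $\sigma=1/\ln z$), so convergence holds iff $|\tau| < \tau_\ast(\sigma)$, i.e.
\begin{equation}
|-\sigma\ln\sigma| < |1+\sigma-\sigma\ln\sigma \pm i\pi\sigma|, \notag
\end{equation}
with the sign dictated by $\operatorname{sgn}\Im\sigma$; equality gives the boundary curve \eqref{complexz}. Since $\sigma = 1/\ln z$ is a conformal bijection from $\C\setminus(-\infty,0]$ onto a slit region and maps $z\to\infty$ to $\sigma\to 0$, where the inequality is comfortably satisfied (the right side $\to 1$, the left side $\to 0$), the convergence region is the component of the complement of \eqref{complexz} containing $z=\infty$ — i.e. the "exterior" as claimed. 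I would also remark that $\operatorname{sgn}\Im\sigma = -\operatorname{sgn}\Im\ln z = -\operatorname{sgn}\arg z$, which is why the sign convention in \eqref{complexz} flips between upper and lower half-planes, paralleling the branch-splitting phenomenon of Remark~\ref{Remark4.1}.

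The main obstacle is the minimization over $k$ together with the branch-consistency check: away from the real axis one must be careful that $\ln\sigma$ is the principal branch and that the $k$ selected by the "closest point" calculation is the \emph{same} $k$ that corresponds to the principal branch of $W$ in the local expansion \eqref{u+-}. A priori these could disagree in parts of the $\sigma$-plane, which would break the clean formula; showing they agree throughout each half-plane (equivalently, that the line of critical $\tau$-values never rotates enough to let a $k\neq 0,1$ point become nearest while $z$ ranges over the convergence region) is the crux. I expect this to follow from a monotonicity argument on $\Re\tau_\ast$ as a function of $\Im u = \pi(2k-1) + (\text{small})$, analogous to the $\xi\le -1$ bound used in the real case, but it requires genuine checking rather than a one-line appeal to Theorem~\ref{Th:ConvReal}.
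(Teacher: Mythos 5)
There is a genuine gap, and it sits exactly where you yourself flag ``the crux.'' Your selection principle --- take the critical value $\tau_\ast^{(k)}$ nearest to the origin among the whole arithmetic progression $k\in\Z$, then hope this coincides with the principal-branch point --- is not the right criterion and is in any case left unproven. The radius of convergence is the distance to the nearest \emph{singularity of the particular branch} $u_\sigma(\tau)$ with $u_\sigma(0)=0$; a critical point of the implicit equation at which the continued value of $u_\sigma$ does not approach $u_\ast^{(k)}$ is not a singularity of that branch at all, so minimizing $\left|\tau_\ast^{(k)}\right|$ over all $k$ is neither necessary nor sufficient (if some $k\notin\{0,1\}$ happened to be nearer, your formula would undershoot the true radius, while the correct answer would be unchanged). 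What is actually needed is to determine which critical values are attainable by the principal-branch function, and this is precisely the step the paper carries out and you defer: writing $u=W(e^s)-1/\sigma-\ln\sigma$ with $\tau=-\sigma\ln\sigma$, comparing with \eqref{u:crit} in an $\epsilon$-neighbourhood of $u_\ast^{(k)}$, and separating imaginary parts gives $\pi(2k-1)+\epsilon\sin\varphi=\Im W-\theta$ with $z=|z|e^{i\theta}$; since $-\pi<\Im W<\pi$ this forces $k\in\{0,1\}$, and at $\epsilon=0$ the case $k=0$ requires $0<\theta<\pi$ while $k=1$ requires $-\pi<\theta<0$. Hence for $\Im\sigma\neq0$ there is exactly \emph{one} admissible singularity (not a choice between a geometric minimizer and a branch-consistent point), namely $\tau_\ast^{(0)}$ for $\Im\sigma<0$ and $\tau_\ast^{(1)}$ for $\Im\sigma>0$, and no minimization over $k$ is needed. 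Your proposed ``monotonicity argument on $\Re\tau_\ast$'' is never supplied, so the central selection step of the theorem is missing from your write-up.

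The remainder of your argument is sound and matches the paper: both admissible points are square-root branch points by the same Weierstrass-preparation analysis as in Theorem~\ref{Th:ConvReal}, and the passage to the $z$-plane via $\tau=-\sigma\ln\sigma$, the boundary curve \eqref{complexz}, and the sign flip coming from $\operatorname{sgn}\Im\sigma=-\operatorname{sgn}\arg z$ are all handled correctly. To repair the proof, replace the distance-minimization step by the imaginary-part computation above (or an equivalent argument showing which critical values the principal-branch continuation can actually reach).
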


\begin{proof}
Repeating the proof of Theorem~\ref{Th:ConvReal} under assumption $\sigma\in\C$
we obtain the same equations \eqref{u:crit} and \eqref{ThImprovedReal5}
for singular points $u_\ast^{(k)}$ and $\tau_\ast^{(k)}$ respectively, where $k\in\Z$.
However, many of the singular points do not correspond to the principal branch of $W$ function
and relate to the other branches. We are going to find acceptable values of $k$
for which singular points relate to the principal branch of $W$. 

To find acceptable values of $k$,
we utilize a relation
$\tau=-\sigma\ln\sigma$ following from \eqref{eq:st} to obtain
\begin{equation} \label{u:sigma}
u=W(e^s)-1/\sigma-\ln\sigma \ ,
\end{equation}
where $s=\frac{1-\tau}{\sigma}-\ln \sigma$.
Let us consider values of $u$ in the $\epsilon$-vicinity of the point $u_\ast^{(k)}$.
Comparing between \eqref{u:sigma} and \eqref{u:crit} gives
\begin{equation}
i\pi(2k-1)+\epsilon e^{i\varphi}=W(e^s)-1/\sigma \ ,     \notag
\end{equation}
where $-\pi<\varphi\leq\pi$. Setting $z=\left|z\right|e^{i\theta}$ ($-\pi<\theta<\pi$) in
$\sigma=1/\ln z$ and separating the imaginary part in the last equation we obtain
\begin{equation} \label{eq:k}
\pi(2k-1)+\epsilon\sin\varphi=\Im W -\theta \ .
\end{equation}
Since for the principal branch $-\pi<\Im W <\pi$, we find $-1/2-\epsilon\sin\varphi/(2\pi)<k<3/2-\epsilon\sin\varphi/(2\pi)$,
i.e. acceptable values are $k=0$ and $k=1$.

Now we note that both points $\tau_\ast^{(0)}$ and $\tau_\ast^{(1)}$
are singular, particularly, they correspond to a square-root branch point
of function $u=u_\sigma(\tau)$ for the same reason as in the real case (see proof of Theorem \ref{Th:ConvReal}).
Taking into account this result
we consider equation \eqref{eq:k} for $\epsilon=0$ in two cases $k=0$ and $k=1$.
When $k=0$, we have $\Im W=\theta-\pi$. Since $-\pi<\Im W <\pi$,
only positive $\theta$ satisfy this equation, i.e. $0<\theta<\pi$.
Similarly when $k=1$ we have $\Im W=\theta+\pi$ which holds for $-\pi<\theta<0$.
Thus we conclude that the curve $\left|-\sigma\ln\sigma\right|=\left|\tau_\ast^{(0)}(\sigma)\right|$ is located in the upper $z$-half-plane
and the curve $\left|-\sigma\ln\sigma\right|=\left|\tau_\ast^{(1)}(\sigma)\right|$ is located in the lower $z$-half-plane,
these curves being symmetric with respect to the real axis.
Hence, the equation \eqref{complexz} describes the boundary of domain of convergence
of the series \eqref{eq:improvedStir} in the complex case.
In addition, since $\sigma=(\ln\left|z\right|-i\theta)/\left|\ln z\right|^2$,
$\theta$ and $\Im\sigma$ are of opposite signs and the equations \eqref{eqs:ComplexConv} follow.
The theorem is completely proved.
\end{proof}

The curve defined by equation \eqref{complexz} in the complex $z$-plane is depicted in Figure \ref{Fig:complex} by solid line in the upper half-plane only (corresponding to the negative sign) as it is symmetric with respect to the real axis. The exterior of this boundary line  can be regarded as the domain of analytic continuation of the series \eqref{eq:improvedStir} from the part of the real axis $x>x_1$ (see Corollary \ref{cor:5.1}) to the complex $z$-plane. For comparison in the same figure it is shown (by dashed line) the boundary line of the domain of convergence of the series \eqref{eq:Comtet} defined by equation \eqref{eq:ConvComtetComplex}.

\begin{figure}[htb]
  \noindent\centering
  {
  \includegraphics[width=80mm]{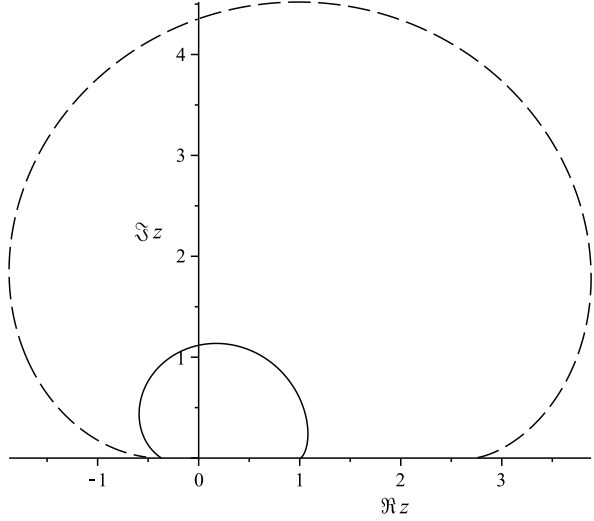}
  }
  \caption{The domains of convergence of series \eqref{eq:improvedStir} and \eqref{eq:Comtet} in the complex $z$-plane
           are located in the exterior of the curves depicted by solid and dashed lines respectively.}
  \label{Fig:complex}
\end{figure}

\begin{remark}\label{Remark6.1}
We note that the case $\left|\sigma\right|<1$ reveals a connection between
the series \eqref{eq:improvedStir} and \eqref{eq:Comtet}. In particular, the case permits
to expand $1/(1+\sigma)$ in powers of $\sigma$ in the former that after some rearrangments
can be reduced to the latter \cite{CRpaper}.
In accordance with Theorem~\ref{Th:ConvComtetComplex} the series \eqref{eq:Comtet} is convergent
in domain $V$ in the complex $\sigma$-plane defined by \eqref{eq:ConvComtetComplex} (written in terms of $\sigma=1/\ln z$).
One can show that the domain $V$ is contained in the unit disc $U=\left\{\sigma\in\C \: | \left|\sigma\right|<1\right\}$ (cf. Remark \ref{Remark4.2}) with the boundaries of $V$ and $U$ having one common point $\sigma=1$ (where both series are convergent).
The series \eqref{eq:improvedStir} is also convergent in $V$ but has a wider domain of convergence
being convergent (for $\left|\sigma\right|<1$) in $U\cap H$ where a domain $H$ bounded by curve \eqref{complexz}.
\end{remark}

In the end of this subsection we give asymptotics for the expansion coefficients of the series \eqref{eq:improvedStir} as $m\rightarrow\infty$ when $\Im\sigma\neq0$.  It follows from the proof of Theorem \ref{Th:ConvComplex} that in this case
there is only one singularity $\tau=\tau_\ast^{(0)}$ when $\Im\sigma<0$
and $\tau=\tau_\ast^{(1)}$  when $\Im\sigma>0$.
Therefore, one can use formula \eqref{asympt1} keeping only one corresponding term (unlike case of real $\sigma$ when there occur two singularities and both terms constitute the asymptotic formula \eqref{asympt2}). Thus, taking \eqref{singtau} we find
$$
c_m(\sigma) \sim \displaystyle \frac{1}{\sqrt{2\pi\sigma}m^{3/2}} \frac{\pm i}{\left( 1 + \sigma -\sigma \ln \sigma \pm i\pi\sigma \right)^{m-1/2}} \hspace{3mm}\mbox{as}\hspace{3mm}m\rightarrow\infty,
$$
where sign ''$+$'' (''$-$'') is taken in case of positive (negative) $\Im\sigma$.

\subsection{Representation in terms of Eulerian numbers}
The expansion coefficients of the series \eqref{eq:improvedStir} can be expressed in terms of the second-order Eulerian numbers \cite{{Graham et. al},{Corless1997seqseries}}. To show that we combine 
\begin{equation} \label{eq:u(sigma,tau)}
u=W(e^s)-\frac{1-\tau}{\sigma} \ ,
\end{equation}
where
\begin{equation} \label{eq:s(sigma,tau)}
s=s(\sigma,\tau)=\frac{1-\tau}{\sigma}-\ln \sigma \ .
\end{equation}
and \eqref{powerstau}, then the coefficients $c_m(\sigma)$ in the right-hand side of \eqref{powerstau}
are
\begin{equation} \label{eq:cmGE2}
c_m(\sigma)=\displaystyle \frac{1}{m!} \left. \displaystyle \frac{d^m}{d\tau^m} W \left( e^s \right)\right|_{\tau=0}=\displaystyle \frac{1}{m!} \left( -\displaystyle\frac{1}{\sigma} \right)^m \left. \displaystyle \frac{d^m}{ds^m} W \left( e^s \right)\right|_{s=-\ln \sigma + 1/ \sigma}
\end{equation}
as $\partial s/\partial\tau = -1/\sigma$ by \eqref{eq:s(sigma,tau)}.\\
Because of \eqref{eq:u(sigma,tau)} the formula \eqref{eq:cmGE2} is valid for $m\geq2$, for $m=1$ we have
\begin{equation} \label{cm1}
c_1(\sigma)=\frac{1}{\sigma}+\displaystyle \left. \displaystyle \frac{d}{d\tau} W \left( e^s \right)\right|_{\tau=0}.
\end{equation}
Since \cite{Corless1996Lam,Corless1997seqseries}
$$
\frac{d^m}{ds^m}W\left(e^s\right)=\frac{q_m\left(W\left(e^s\right)\right)}{\left(1+W\left(e^s\right)\right)^{2m-1}},
$$
where the polynomials $q_n(r)$ can be expressed in terms of the second-order Eulerian numbers \cite{Graham et. al,Corless1997seqseries}
$$
q_m(r)=\sum_{k=0}^{m-1}
\EulerianTwo{m-1}{k}(-1)^k r^{k+1},
$$
and
$$
\left. W \left( e^s \right)\right|_{s=-\ln \sigma + 1/ \sigma}=W\left( \displaystyle\frac{1}{\sigma}e^{1/\sigma} \right)=\displaystyle\frac{1}{\sigma},
$$
we finally obtain
\begin{equation} \label{CoeffImprovedEuler}
c_1(\sigma)=\frac{1}{1+\sigma} \mbox{,  } c_m(\sigma)= \displaystyle\frac{ (-1)^m\sigma^{m-1} } {m!(1+\sigma)^{2m-1} }
\sum_{k=0}^{m-1}\EulerianTwo{m-1}{k}\frac{(-1)^k} { \sigma^{k+1} } \mbox{,  } m\geq2.
\end{equation}
Substituting \eqref{CoeffImprovedEuler} into the right-hand side of \eqref{powerstau} results in a desireable formula

\begin{equation}\label{eq:improvedEuler}
 u = \frac{\tau}{1+\sigma} + \sum_{m=2}^\infty \frac{\tau^m}{m!(1+\sigma)^{2m-1}} \sum_{k=0}^{m-1}
\EulerianTwo{m-1}{k} (-1)^{m-k}\sigma^{m-k-2}.
\end{equation}
By introducing the variable $\zeta=1/(1+\sigma)$ the series \eqref{eq:improvedEuler} can also be written as

\begin{equation} \label{dzetaEuler2}
 u = \tau\zeta + \sum_{m=2}^\infty \frac{\tau^m}{m!}\sum_{k=0}^{m-1}
\EulerianTwo{m-1}{k}(-1)^{m+k}\zeta^{m+k+1} (1-\zeta)^{m-k-2}.         
\end{equation}
We note that the expansion \eqref{dzetaEuler2} does not contain terms of the second order in $\zeta$.

The series \eqref{eq:improvedStir},\eqref{dzetaStir2},\eqref{eq:improvedEuler}, and \eqref{dzetaEuler2} have the same properties including the domain of convergence and the asymtotic estimates for the expansion coefficients studied in Section \ref{AsymptCoeff}. This fact leads to some combinatorial consequences considered in Section \ref{C-R}.

\section{Transformed series}
\subsection{An invariant transformation} \label{subsec:InvTransform}
The above studied expansions \eqref{eq:Comtet} and \eqref{eq:improvedStir} are limited in
their domain of applicability by the fact that $\sigma$ and $\tau$ are each singular at $z=1$, restricting their utility
to $z>1$. In addition to the domain of validity of the variables, there is the question of the domain of convergence
of the series ascertained in theorems \ref{Th:ConvComtetReal}, \ref{Th:ConvComtetComplex}, \ref{Th:ConvReal} and \ref{Th:ConvComplex}.

In this section we consider transformations of the series \eqref{eq:Comtet} and \eqref{eq:improvedStir}
referred as to transformed series. Our aims are to improve the convergence properties with respect to domain of convergence
and rate of convergence. Some results are obtained in \cite{CASC2010} employing experimental approach;
we supplement them with a theoretical study of convergence of the transformed series.

We reconsider the derivation of \eqref{eq:Comtet},
trying the \emph{ansatz}
\begin{equation}\label{eq:W(z,p)}
W = \ln z - \ln(p+\ln z)+u\ .
\end{equation}
Substituting into the defining equation $W e^W=z$, we obtain
\[  \bigg(\ln z -\ln(p+\ln z) +u\bigg)\frac{ze^u}{p+\ln z} = z
\]
From this, it is clear that if we define
\begin{equation} \label{def:sigma,tau(z,p)}
\sigma = \frac1{p+\ln z} \mbox{  and  } \tau = \frac{p+\ln(p+\ln z)}{p+\ln z}\ ,
\end{equation}
then we recover the equation \eqref{eq:basic} originally given by de Bruijn for $u$
and leading to the series \eqref{eq:Comtet}. Thus the fundamental relation (\ref{eq:basic}) is invariant with respect to $p$, with only the definitions of $\sigma$ and $\tau$ being changed. This remarkable property is due to a similarity property of the solution \eqref{eq:opening} of the original transcendental equation $y^\alpha e^y=x$ and explained by the following theorem.
\begin{Th}\label{th:similarity}
An untransformed series solution of equation $y^\alpha e^y=x$ and
the corresponding transformed series for the Lambert $W$ function, defined by \eqref{def:sigma,tau(z,p)},
are connected by relations
\begin{equation} \label{eq:x=(alpha*z)^alpha}
x=(\alpha z)^\alpha, \quad \alpha=e^p \ .
\end{equation}
\end{Th}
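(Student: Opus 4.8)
The plan is to verify directly that the substitution defining the transformed series, when compared against the de Bruijn--Comtet solution of $y^\alpha e^y = x$, forces the stated dictionary $x=(\alpha z)^\alpha$, $\alpha = e^p$. The starting observation is that the original asymptotic solution \eqref{eq:opening}--\eqref{eq:st} for $y^\alpha e^y = x$ can be written as $y = \alpha\left(\frac{1-\tau}{\sigma}+u\right)$ with $\sigma = \alpha/\ln x$ and $\tau = \alpha(\ln\ln x)/\ln x$, where $u$ satisfies \eqref{eq:basic}. On the other hand, since $y^\alpha e^y = x$ is solved by $y = \alpha W(x^{1/\alpha}/\alpha)$, setting $z = x^{1/\alpha}/\alpha$ gives $W(z) = y/\alpha = \frac{1-\tau}{\sigma}+u$. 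So first I would record this identity $W(z) = (1-\tau)/\sigma + u$ coming from the untransformed theory.

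Next I would compute $(1-\tau)/\sigma$ in terms of $z$ using the definitions \eqref{def:sigma,tau(z,p)} of the transformed variables and compare. From \eqref{def:sigma,tau(z,p)}, $1/\sigma = p + \ln z$ and $\tau/\sigma = p + \ln(p+\ln z)$, hence $(1-\tau)/\sigma = 1/\sigma - \tau/\sigma = (p+\ln z) - (p + \ln(p+\ln z)) = \ln z - \ln(p+\ln z)$, which is exactly the non-$u$ part of the ansatz \eqref{eq:W(z,p)}; this is just the internal consistency already displayed before the theorem, and it shows the transformed construction genuinely reproduces \eqref{eq:basic}. The substantive step is to match this against the untransformed formula: with $x = (\alpha z)^\alpha$ we get $\ln x = \alpha(\ln\alpha + \ln z) = \alpha(p + \ln z)$ under $\alpha = e^p$, so $\sigma_{\text{orig}} = \alpha/\ln x = 1/(p+\ln z) = \sigma_{\text{transf}}$. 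Similarly $\ln\ln x = \ln\alpha + \ln(p+\ln z) = p + \ln(p+\ln z)$, so $\tau_{\text{orig}} = \alpha(\ln\ln x)/\ln x = (p+\ln(p+\ln z))/(p+\ln z) = \tau_{\text{transf}}$. Thus the two sets of variables coincide, and since $u$ is determined by \eqref{eq:basic} from $\sigma,\tau$ alone, the series \eqref{eq:Comtet} for the transformed problem is term-for-term the untransformed series with these matched arguments; the prefactor checks out because $x^{1/\alpha}/\alpha = (\alpha z)/\alpha = z$, so $\alpha W(x^{1/\alpha}/\alpha) = \alpha W(z)$, and the transformed expansion \eqref{eq:W(z,p)} is precisely $W(z) = \frac1\alpha \cdot \alpha\big((1-\tau)/\sigma + u\big)$.

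There is no serious obstacle here — the proof is essentially a change-of-variables bookkeeping argument — but the one point requiring a little care is the direction of the logic: one should present it as establishing that the relations \eqref{eq:x=(alpha*z)^alpha} are the \emph{unique} correspondence making the transformed $\sigma,\tau$ agree with the untransformed $\sigma,\tau$. Concretely, I would argue that demanding $\sigma_{\text{orig}} = \sigma_{\text{transf}}$ for all $z$ gives $\ln x = \alpha(p+\ln z)$, which (differentiating in $z$, or just reading off) forces $\ln x = \alpha\ln z + c$ with $c = \alpha p$, i.e. $x = e^{c} z^\alpha = (e^p z)^\alpha$; then the further demand that $\tau_{\text{orig}} = \tau_{\text{transf}}$ pins down $e^p = \alpha$, i.e. $c = \alpha\ln\alpha$ and $x = (\alpha z)^\alpha$. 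I would also remark that one must restrict to the branch region where $\ln z$ and $\ln(p+\ln z)$ denote principal values (mirroring the hypotheses already in force for \eqref{eq:Comtet}), so that $\alpha W(x^{1/\alpha}/\alpha)$ is indeed the principal-branch Lambert $W$ of $z$; with that caveat the identification is exact and the theorem follows.
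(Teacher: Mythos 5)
Your proposal is correct and follows essentially the same route as the paper: both start from the similarity identity $W\bigl(x^{1/\alpha}/\alpha\bigr)=(1-\tau)/\sigma+u$ with $\sigma,\tau$ as in \eqref{eq:st}, and then verify that the substitution $x=(\alpha z)^\alpha$, $p=\ln\alpha$ turns \eqref{eq:st} into exactly \eqref{def:sigma,tau(z,p)}, so the fundamental relation \eqref{eq:basic} and hence the series are preserved. Your added remark that matching $\sigma$ and $\tau$ forces this correspondence uniquely is a harmless refinement of the paper's observation that $\alpha$ enters only through $z=x^{1/\alpha}/\alpha$.
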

\begin{proof}
The solution \eqref{eq:opening} of equation $y^\alpha e^y=x$ possesses a similarity property
with respect to parameter $\alpha>0$ in the sense \cite{CRpaper}
\begin{equation} \label{eq:similarity}
\Phi_\alpha(x)= \alpha\Phi_1 \left( \frac{x^{1/\alpha}}{\alpha} \right)
= \alpha W \left( \frac{x^{1/\alpha}}{\alpha} \right)\ .
\end{equation}
It follows from \eqref{eq:opening} and \eqref{eq:similarity} that
\begin{equation} \label{eq:structure}
W \left( \frac{x^{1/\alpha}}{\alpha} \right) = \frac{1-\tau}{\sigma} + u \ ,
\end{equation}
where $\sigma$ and $\tau$ are defined by \eqref{eq:st}.
The right-hand side of \eqref{eq:structure} does not include $\alpha$ explicitly.
On the other hand, $\alpha$ is included in the left-hand side through a combination $z = x^{1/\alpha}/ \alpha$.
Therefore, the fundamental relation \eqref{eq:basic} will retain if we change variable $x=(\alpha z)^\alpha$.
Substituting this formula into \eqref{eq:st} and introducing parameter $p=\ln \alpha$
we obtain exactly equations \eqref{def:sigma,tau(z,p)} and the theorem follows.
\end{proof}
Thus introducing the invariant parameter $p$ generates an infinite one-parameter family of series formed by replacement of variables $\tau$ and $\sigma$ in the original series by expressions \eqref{def:sigma,tau(z,p)}.
Similar series for $W$ are associated with an invariance observed in \cite{CRpaper}
and studied in \cite{Corless1997seqseries}.

We now consider the properties of the transformations \eqref{def:sigma,tau(z,p)} for $z\in\R$. We shall start with $p\in \R$ and
later consider briefly one complex value of $p$.
Both $\sigma$ and $\tau$ are singular at $z_s=e^{-p}$, with the special case $p=0$
recovering the previous observations regarding the singularities at $z=1$.
We note $\sigma$ is monotonically decreasing
on $z>z_s$.
For $\tau$, we have $\tau(z_0)=0$ at $z_0=\exp(z_s-p)$, with $\tau$ positive for larger $z$ and negative for smaller.
Also we note that $\tau$ has a maximum at $z=\exp(ez_s-p)$.
In Figure \ref{Sigma}, we plot $\sigma$ and $\tau$, defined by (\ref{def:sigma,tau(z,p)}), for different values of $p$.
We see that for all $z>z_s$,
$\sigma$ decreases with increasing $p$, but $\tau$ increases.
In view of the form of the double sums above it is not obvious
whether convergence is increased or decreased as a result of these opposed changes.
This is what we wish to investigate here.

\begin{figure}[ht]
 \begin{center}
  \scalebox{0.33}{\includegraphics{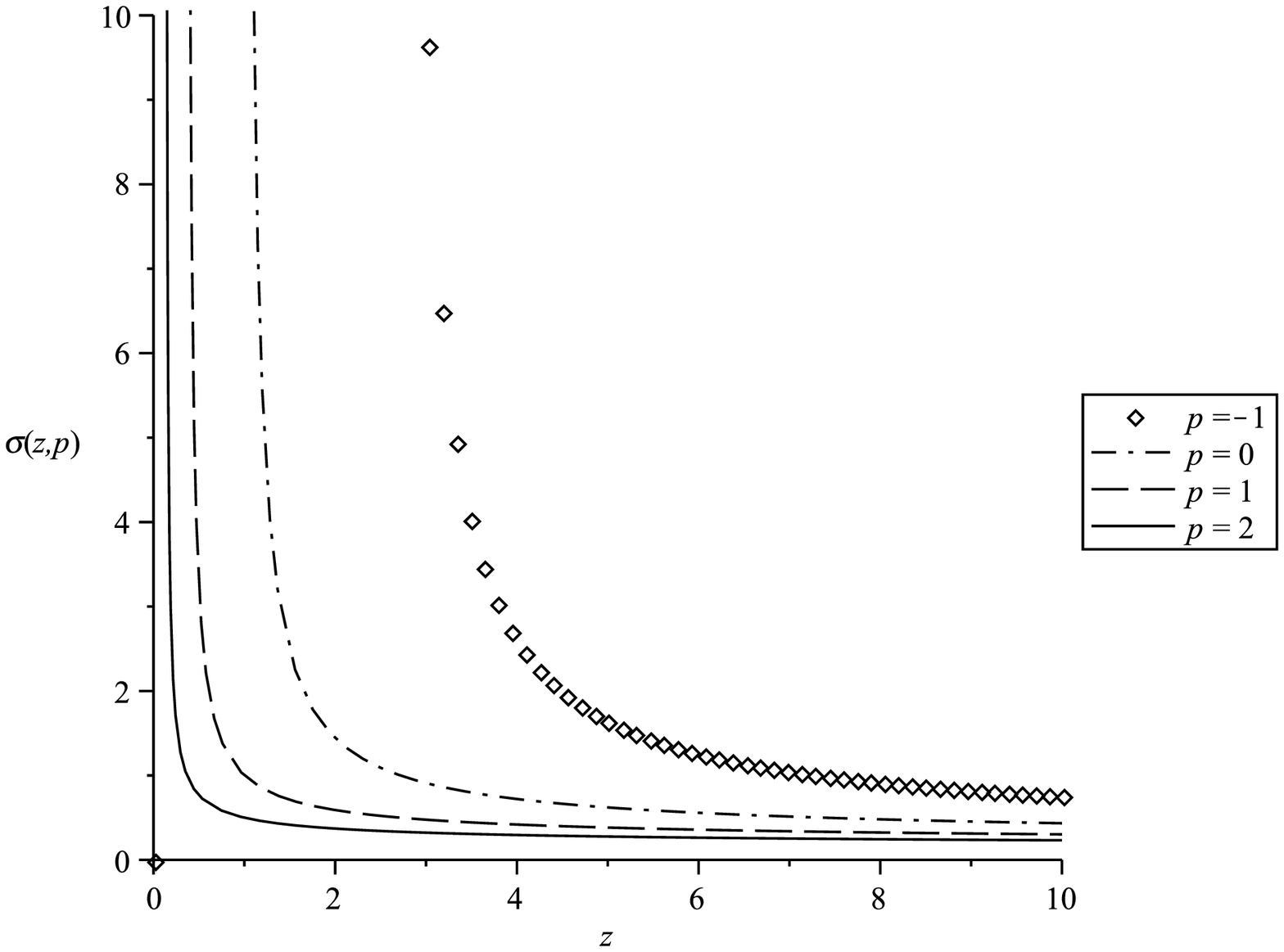}}
  \scalebox{0.3}{\includegraphics{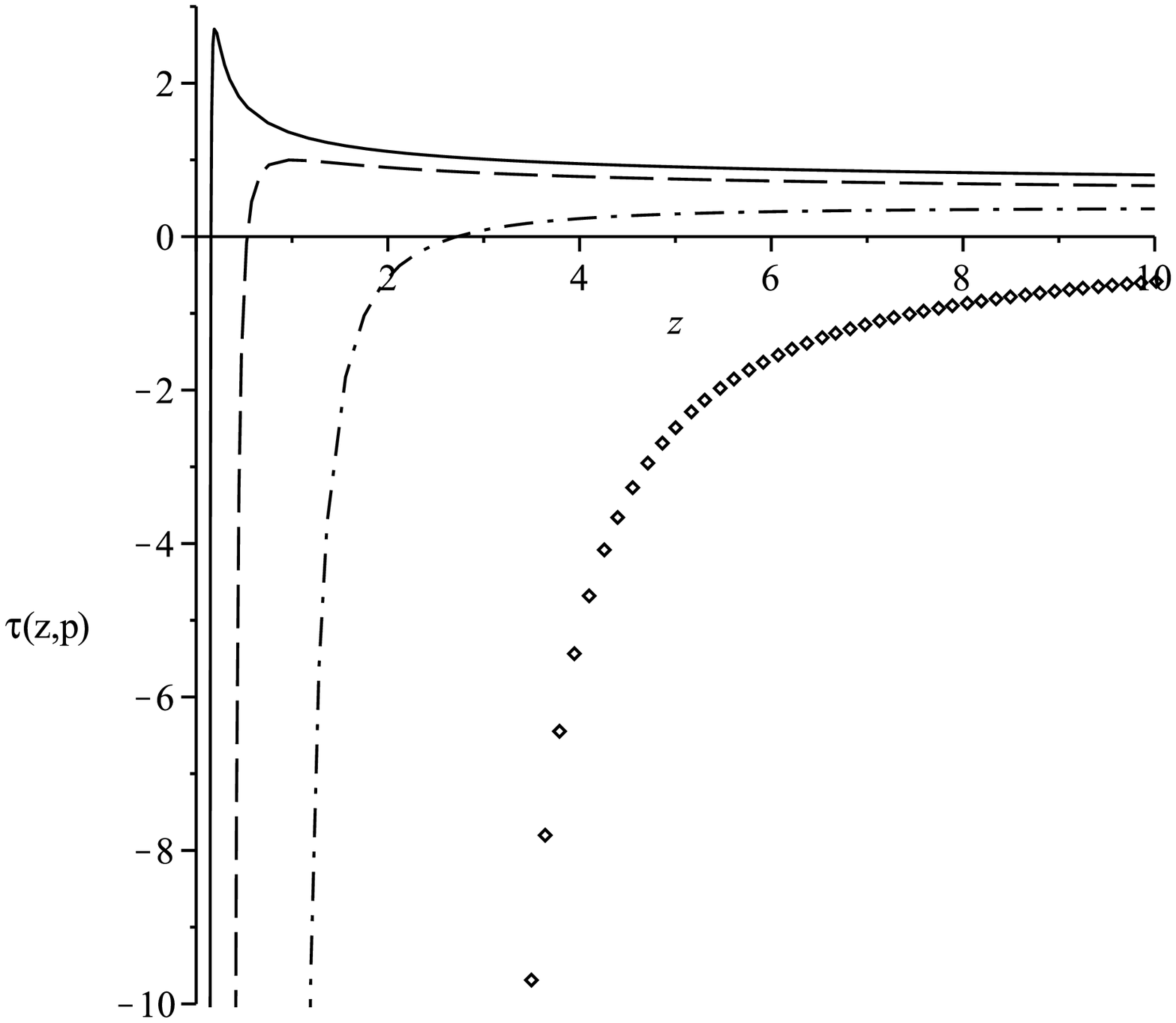}}
 \end{center}
 \caption{Dependence $\sigma$ and $\tau$ on $z$ for different values of parameter $p$.}
 \label{Sigma}
\end{figure}

\subsection{Domain of convergence}
We wish to investigate first the domains of $z\in\R$ for which the series \eqref{eq:Comtet} and \eqref{eq:improvedStir} converge,
and how the domains vary with $p$. We begin with theoretical results.

For $p=0$ the domains of convergence are known from theorems \ref{Th:ConvComtetReal} and \ref{Th:ConvReal}. Specifically, the series \eqref{eq:Comtet} converges for $z>e$ and the series \eqref{eq:improvedStir} converges for $z>z_0=1.004458...$ (see Corollary \ref{cor:5.1}).
For arbitrary real $p$ the following statement can be proved for the series \eqref{eq:Comtet}.

\begin{Th} \label{Th:ComtetParam}
The domain of convergence of the transformed series \eqref{eq:Comtet} is defined by equations
\begin{equation} \label{ineq:param}
\Re W_{-1}\left(-e^{p-1}(p+\ln z)\right)>p-1 \mbox{ and } z>e^{-p} \ ,
\end{equation}
which is equivalent to
\begin{equation} \label{eq:ComtetParam}
z > z_p =
\begin{cases}
e^{1-2p}, & p \leq 0 \\
e^{-p+\eta_0\csc\eta_0}, & p>0
\end{cases}
\end{equation}
where $\eta_0\in(0,\pi)$ is the root of equation $\eta_0\cot\eta_0=1-p$.
\end{Th}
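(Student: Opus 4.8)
The plan is to reuse Theorem~\ref{Th:ConvComtetSigmaTau}, whose conclusion \eqref{eq:ConvComtetSigmaTau} is stated purely in terms of $\sigma$ and $\tau$ and which holds regardless of how these variables are parametrised, since Theorem~\ref{th:similarity} shows that the transformed series satisfies the same fundamental relation \eqref{eq:basic}. The first step is therefore to substitute the transformed definitions \eqref{def:sigma,tau(z,p)} into \eqref{eq:ConvComtetSigmaTau}. Writing $L = p + \ln z$, we have $\sigma = 1/L$ and $\tau/\sigma = p + \ln L$, so $\ln\sigma = -\ln L$ and $1 - \tau/\sigma = 1 - p - \ln L$. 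The condition \eqref{eq:ConvComtetSigmaTau} becomes $-\ln L < 1 - p - \ln L + \Re W_{-1}\!\left(-e^{p+\ln L - 1}\right)$, i.e. $\Re W_{-1}\!\left(-e^{p-1}L\right) > p - 1$, which together with the domain restriction $L > 0$ (equivalently $z > e^{-p}$ for $z$ real) gives the first displayed condition \eqref{ineq:param}. I should pause to check that when $p = 0$ this reduces to \eqref{eq:Remark2.2}, as a sanity check.

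The second step is to convert \eqref{ineq:param} into the explicit bound \eqref{eq:ComtetParam} by applying Lemma~\ref{lemma}. Here the inequality is $\Re W_{-1}(x) > a$ with $x = -e^{p-1}L < 0$ and $a = p - 1$, so the lemma applies with the case split at $a = -1$, i.e. at $p = 0$. For $p \leq 0$ (so $a \leq -1$) the lemma gives $x < a e^a$, that is $-e^{p-1}L < (p-1)e^{p-1}$, which after dividing by $-e^{p-1}$ (flipping the inequality) reads $L > 1 - p$, i.e. $p + \ln z > 1 - p$, i.e. $z > e^{1 - 2p}$; one checks this automatically satisfies $z > e^{-p}$ since $1 - 2p \geq -p$ for $p \leq 0$. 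For $p > 0$ (so $a > -1$) the lemma gives $x < -e^a \eta_0 \csc\eta_0$ with $\eta_0 \in (0,\pi)$ the root of $\eta_0\cot\eta_0 = -a = 1 - p$; substituting $x = -e^{p-1}L$ and $a = p-1$ yields $-e^{p-1}L < -e^{p-1}\eta_0\csc\eta_0$, hence $L > \eta_0\csc\eta_0$, i.e. $z > e^{-p + \eta_0\csc\eta_0}$. One then verifies $z_p = e^{-p+\eta_0\csc\eta_0} > e^{-p}$ since $\eta_0\csc\eta_0 > 0$ on $(0,\pi)$, so again the extra constraint $z > e^{-p}$ is subsumed, and that for $p > 0$ indeed $z_p > e^{-p}$ makes the transformed variables well-defined on the whole convergence region.

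The main obstacle I anticipate is not any single computation but rather the bookkeeping needed to confirm that the two pieces of \eqref{eq:ComtetParam} match up continuously at $p = 0$ (both give $z_0 = e$, consistent with Corollary~\ref{cor:x>e}) and that the domain restriction $z > e^{-p}$ coming from the singularity of $\sigma$ and $\tau$ never becomes the binding constraint — equivalently, that $z_p \geq e^{-p}$ for all real $p$, with equality only in degenerate limits. A secondary subtlety is the transition in Lemma~\ref{lemma} exactly at $a = -1$: one should note that at $p = 0$ the root equation $\eta_0\cot\eta_0 = 1$ has its solution $\eta_0 \to 0^+$ with $\eta_0\csc\eta_0 \to 1$, recovering $z_0 = e^{1} = e$ from the $p > 0$ branch as well, so the formula is genuinely continuous. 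With these checks in place the theorem follows directly from Theorem~\ref{Th:ConvComtetSigmaTau} and Lemma~\ref{lemma}.
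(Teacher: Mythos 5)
Your proposal is correct and follows essentially the same route as the paper: substitute the transformed variables \eqref{def:sigma,tau(z,p)} into the criterion \eqref{eq:ConvComtetSigmaTau} (valid because the fundamental relation \eqref{eq:basic} is unchanged), obtain \eqref{ineq:param} under the assumption $p+\ln z>0$, and then apply Lemma~\ref{lemma} with $a=p-1$, checking $z_p>e^{-p}$ to justify that assumption. Your extra verifications (continuity of the two branches at $p=0$ and consistency with Corollary~\ref{cor:x>e}) are sound but not needed beyond what the paper's argument already contains.
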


\begin{proof}
The proof of the theorem is similar to that of Theorem \ref{Th:ConvComtetReal} and based on an application
of Theorem~\ref{Th:ConvComtetSigmaTau} to the transformed series \eqref{eq:Comtet}.
In particular, substituting the expressions \eqref{def:sigma,tau(z,p)}
in \eqref{eq:ConvComtetSigmaTau} we obtain in the real case,
i.e. under assumption $p+\ln z>0$, the inequality \eqref{ineq:param}.
Applying Lemma \ref{lemma} to the latter we get \eqref{eq:ComtetParam},
where $z_p>e^{-p}$, which justifies the above assumption and the theorem follows.\end{proof}
\begin{remark}
The formula \eqref{eq:ComtetParam} can be obtained by substituting equations \eqref{eq:x=(alpha*z)^alpha} in \eqref{eq:ConvComtetReal}. However, in contrast to $x_\alpha$ defined by \eqref{eq:ConvComtetReal}, $z_p$
is monotone decreasing with $p$. Thus, the larger $p$,
the wider domain of convergence of the transformed series \eqref{eq:Comtet} (cf. Remark \ref{rem:x(alpha)}).
\end{remark}
\begin{remark}
In the formula \eqref{eq:ComtetParam}, when $p>0$ but $p\neq1$,
we can also write $z_p=e^{-p+(1-p)\sec\eta_0}$.
\end{remark}
\begin{remark}\label{Remark 7.2}
The convergence condition \eqref{ineq:param} can be extended to the case of complex $z$
similar to the extension of the condition \eqref{eq:Remark2.2}
for the untransformed series \eqref{eq:Comtet} by Theorem~\ref{Th:ConvComtetComplex}.
\end{remark}
To find out the domain of convergence of the transformed series \eqref{eq:improvedStir} one should substitute \eqref{def:sigma,tau(z,p)} in \eqref{ThImprovedReal1} and solve the obtained equation for $z$ as a function of $p$.
As a result, we obtain formulae to compute $z_p$ similar to those stated in Theorem \ref{th:AlphaDomains}.
We also can find a very good approximation for $z_p$ for $p<1$ using equation $x_\alpha\approx x_1$, stated in Remark \ref{rem:x=x1}, and Theorem \ref{th:similarity}.
Specifically, substituting \eqref{eq:x=(alpha*z)^alpha} in this equation we obtain
\begin{equation} \label{eq:z(p)approx}
z_p \approx e^{-p}\,(z_{\scriptscriptstyle 0})^{e^{-p}}\ .
\end{equation}

The accurate and approximate values of $z_p$ are depicted in Figure \ref{Fig:x(p)}
by solid line and circles respectively together with curve \eqref{eq:ComtetParam} (dashed line).
\begin{figure}
  \begin{center}
  \scalebox{0.45}{ \includegraphics{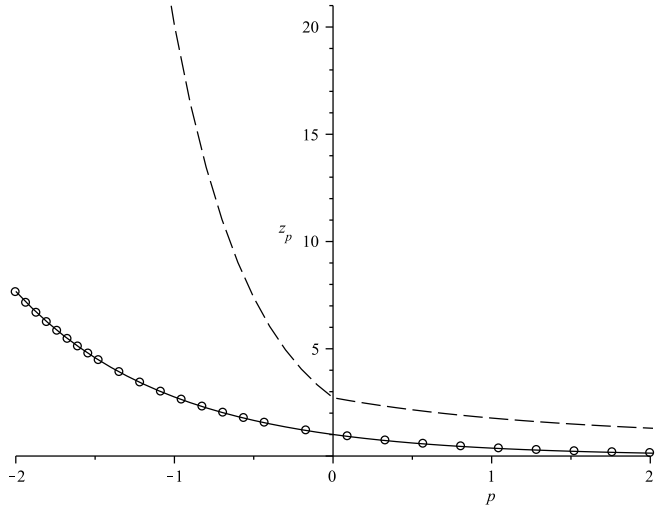} }
  \end{center}
  \caption{Behavior of boundary of convergence domain as a function of $p$ for series \eqref{eq:Comtet} (dashed line) and \eqref{eq:improvedStir} (solid line) in real case.}
  \label{Fig:x(p)}
\end{figure}
It follows from Figure \ref{Fig:x(p)} as well as from \eqref{eq:ComtetParam} and \eqref{eq:z(p)approx} that with increase of parameter $p$ the domain of convergence of the transformed series monotonely extends. To illustrate and qualitatively verify this result we design  an appropriate numerical procedure. The method is simply to compute the partial sum of a series to a high number of terms, using extended floating-point precision as necessary, and
then to plot the ratio of the partial sum to the exact value (the exact value is obtained using a built-in \textsc{Maple} function LambertW(k,x), where a method different from series summation is used).
The edge of the domain of convergence is then signaled by rapid oscillations and
by marked deviations from the desired ratio of $1$. (To make a graph be readable we depict only the relevant part of each curve.)

For the series (\ref{eq:Comtet}) we have plotted in Figure \ref{fig:ComtetPlot}
the partial sum to 40 terms for different values of $p$.
For $p=0$, we see a nice illustration of Theorem \ref{Th:ConvComtetReal}, with the partial sum becoming unstable in the vicinity of
$z=e$. For positive $p$, we see the domain of convergence increased and for negative $p$ it is decreased, in accordance with Theorem \ref{Th:ComtetParam}.
\begin{figure}[hbt]
  \begin{center}
  \scalebox{0.45}{\includegraphics{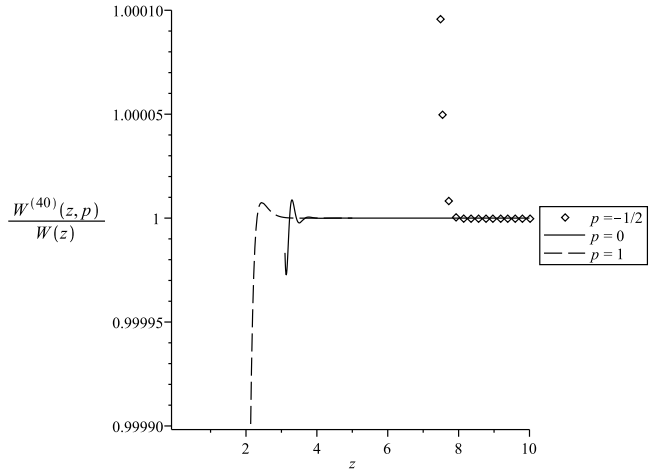}  }
  \end{center}
  \caption{For series (\ref{eq:Comtet}), the ratio $W^{(40)}(z,p)/W(z)$ as functions of $z$ for $p=-1/2,0,1$.}
  \label{fig:ComtetPlot}
\end{figure}
Similar effects can be seen for (\ref{eq:improvedStir}), we plot in Figure \ref{fig:allpimproved}
the partial sums for 40 terms as $p$ varies.
The domain of convergence for each $p$ is clearly seen, and confirms that the point of divergence
moves to larger $z$ for decreasing $p$ and to the left for increasing $p$. For $p=0$ this point is very close to 1, which sharp demonstrates the result in Theorem \ref{Th:ConvReal}.
\begin{figure}[hbt]
  \begin{center}
  \scalebox{0.4}{\includegraphics{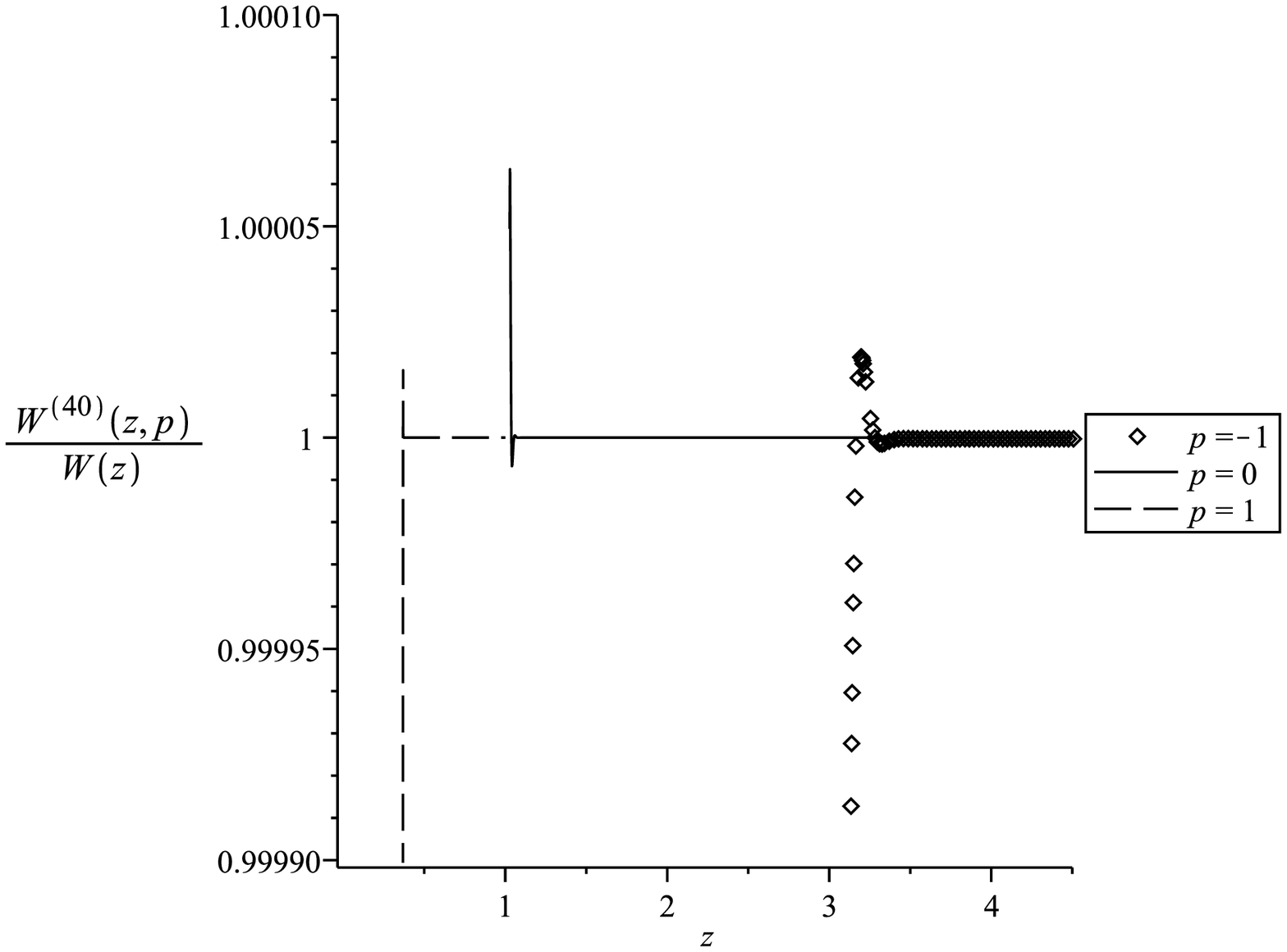}  }
  \end{center}
  \caption{For series (\ref{eq:improvedStir}), the ratio $W^{(40)}(z,p)/W(z)$ as functions of $z$ for $p=-1,0,1$.
  Compared with Figure \ref{fig:ComtetPlot}, this shows convergence down to smaller $z$.}
  \label{fig:allpimproved}
\end{figure}

We can summarize the above findings by noting that series (\ref{eq:improvedStir}) has a wider domain of convergence,
and a better behaviour with $p$, while the domain of convergence for series (\ref{eq:Comtet}) becomes worse in that order.

The fact that the domain of convergence of the transformed series is extending while the parameter $p$ is increasing can also be found in the complex case based on the results of theorems \ref{Th:ConvComplex} and \ref{Th:ComtetParam}. To make certain of this it is sufficient for the series \eqref{eq:improvedStir}, to substitute expressions \eqref{def:sigma,tau(z,p)} (with $z\in\C$) into equation \eqref{ThImprovedReal1} and for the series \eqref{eq:Comtet}, to consult Remark \ref{Remark 7.2}. The results are presented  for $p = -1, -1/2, 0 ,1/2 \mbox{  and  } 1$ in Figure \ref{fig:ComplexDomain2ap} and Figure \ref{fig:ComplexDomain3ap} for the series \eqref{eq:Comtet} and \eqref{eq:improvedStir} respectively where the curves for $p=0$ are the same as in Figure \ref{Fig:complex} and the points of intersection of the curves with the positive real axis correspond to points on the curves
depicted in Figure \ref{Fig:x(p)}.

\begin{figure}[ht]
  \begin{center}
  \scalebox{0.4}{ \includegraphics{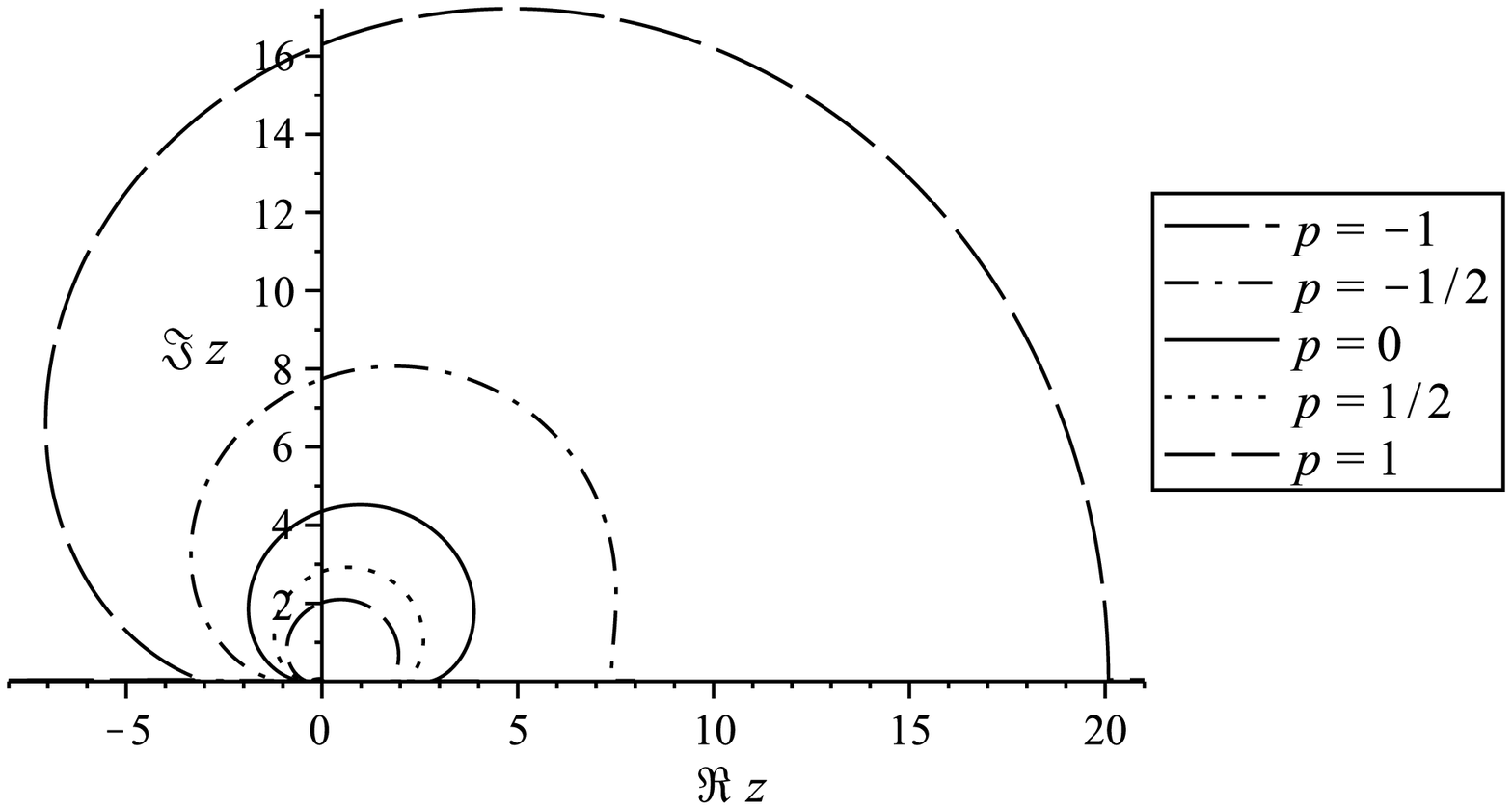} }
  \end{center}
  \caption{Domains of convergence of series (\ref{eq:Comtet}) in complex $z$-plane for $p = -1, -1/2, 0 ,1/2 \mbox{  and  } 1$.}
  \label{fig:ComplexDomain2ap}
\end{figure}

\begin{figure}[ht]
  \begin{center}
  \scalebox{0.4}{ \includegraphics{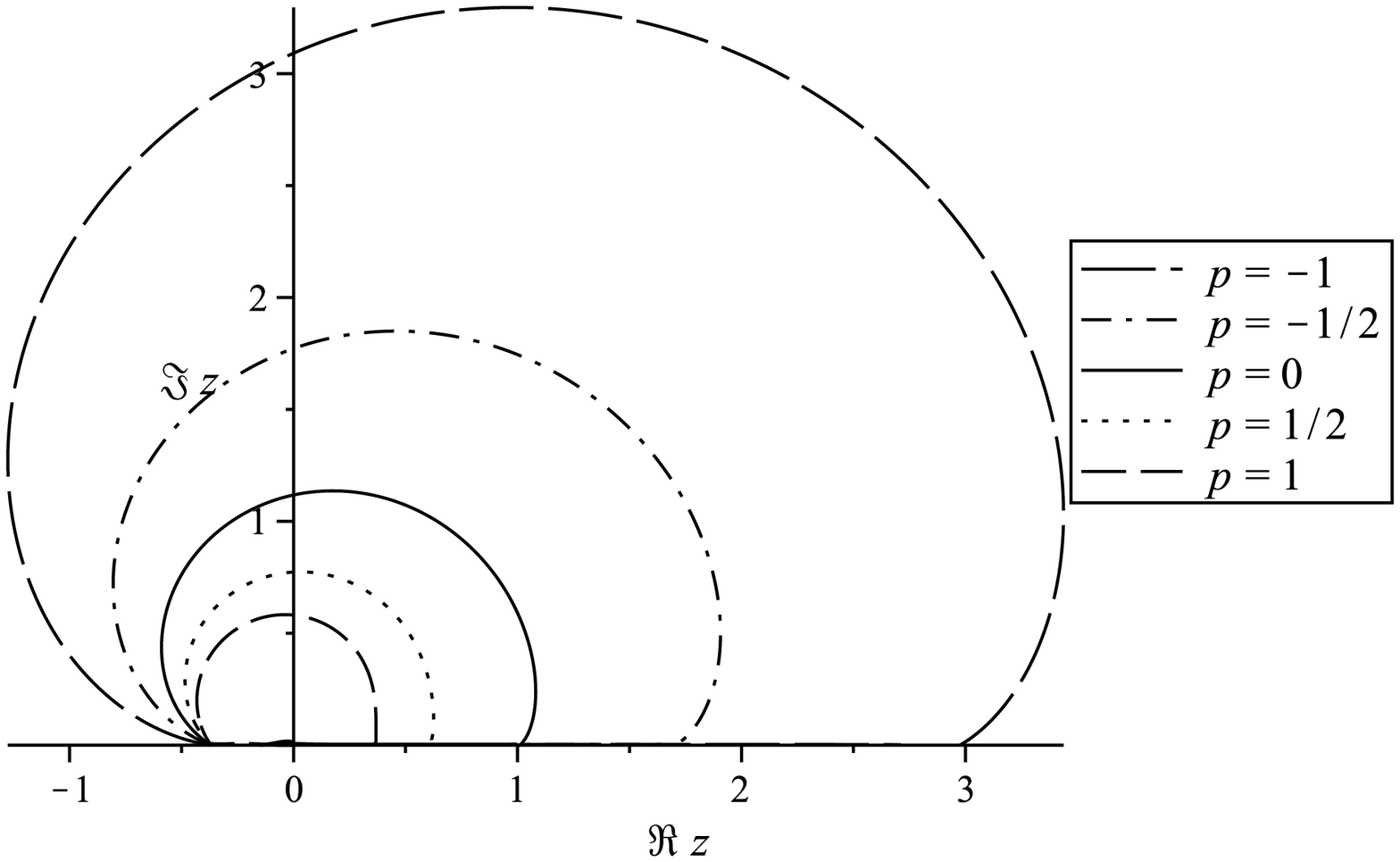} }
  \end{center}
  \caption{Domains of convergence of series (\ref{eq:improvedStir}) in complex $z$-plane for $p = -1, -1/2, 0 ,1/2 \mbox{  and  } 1$.}
  \label{fig:ComplexDomain3ap}
\end{figure}

\subsection{Rate of convergence}
By rate of convergence, we are referring to the accuracy obtained by partial sums of a series.
Given two series, each summed to $N$ terms, the series giving on average a closer approximation to the converged value
is said to converge more quickly.
The qualification `on average' is needed because it will be seen in the plots below that the error
regarded as a function of $z$ can show fine structure which confuses the search for a general trend.
Further, the comparison of rate of convergence between different series can vary with $z$ and $p$.
For some ranges of $z$, one series will be best, while for other ranges of $z$ a different series will be best.
Although one series may converge on a wider domain than another, there is no guarantee that the same series
will converge more quickly on the part of the domain they have in common.
The practical application of these series is to obtain rapid estimates for $W$ using
a small number of terms, and for this the quickest convergence is best, but
this will be dependent on the domain of $z$.

The previous section showed that positive values of the parameter $p$
extend the domain of convergence of the series, but its effect on rate of convergence is different.
Figures \ref{fig:ComtetAccuracy} and \ref{fig:ImproveAccuracy} show the
dependence on $z$ of the accuracy of computations of the series (\ref{eq:Comtet}) and (\ref{eq:improvedStir})
respectively with $N=10$ for $p=-1,-1/2,0$ and $1$.
One can see that the behaviour of the accuracy is non-monotone with respect to both $z$ and $p$ although some
particular conclusions can be made. For example, one can observe that for the series (\ref{eq:Comtet}) at least
for $z<30$ within the common domain of convergence the accuracy for $p=-1/2,0$ and $1$ is higher than for $p=-1$.
For the series (\ref{eq:improvedStir}) an increase of positive values of $p$ reduces a rate of convergence
within the common domain of convergence i.e. for $z>1.5$. However, at the same time for $z>11$ computations with $p=-1$ are more
accurate than those with positive $p$ and for $5<z<18$ the highest accuracy occurs when $p=-1/2$.
\begin{figure}
  \begin{center}
  \scalebox{0.4}{\includegraphics{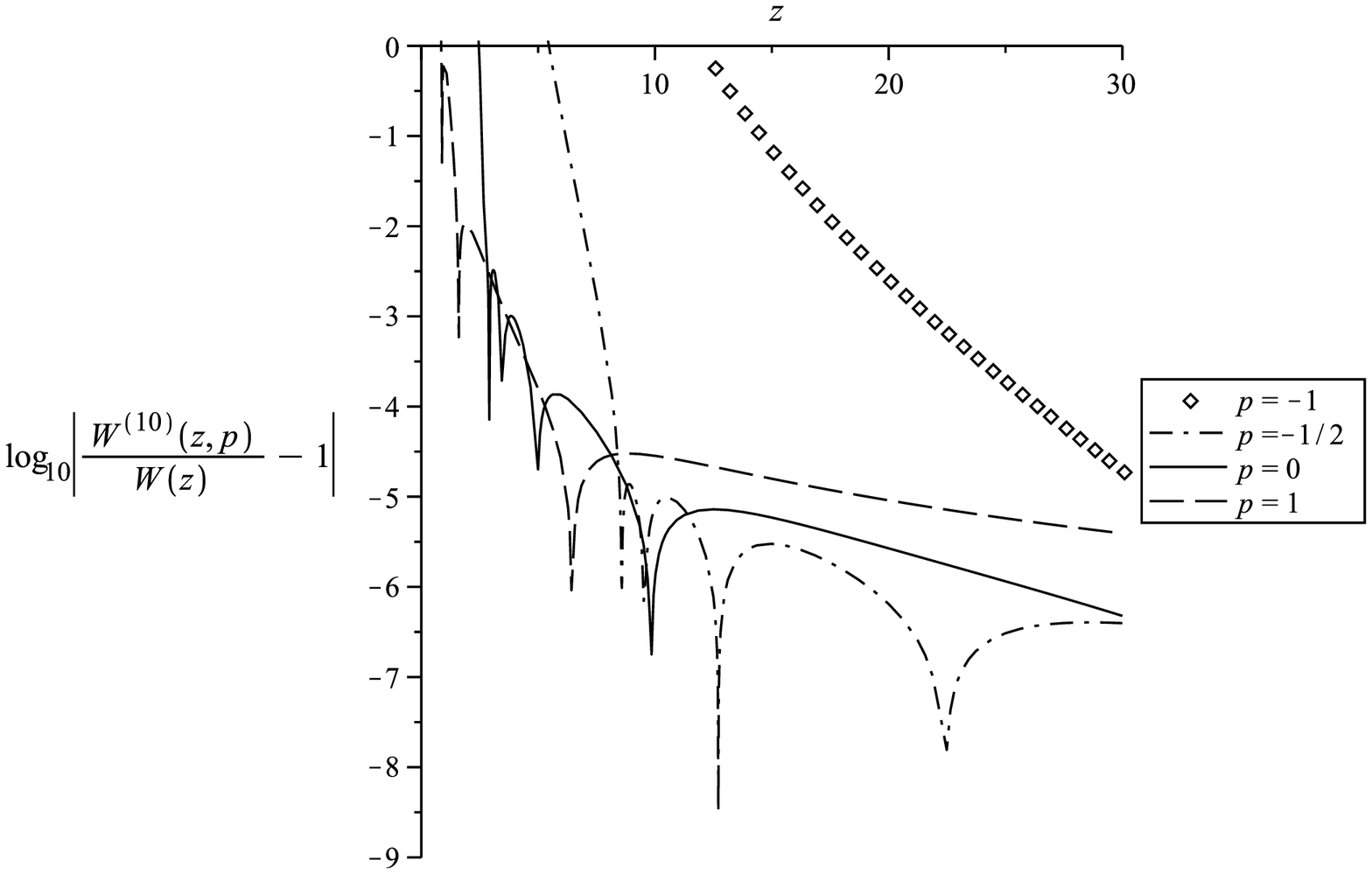} }
  \end{center}
  \caption{For series (\ref{eq:Comtet}) with $N=10$, changes in accuracy in $z$ for $p=-1,-1/2,0$ and $1$.}
  \label{fig:ComtetAccuracy}
 \end{figure}

 \begin{figure}
  \begin{center}
  \scalebox{0.4}{  \includegraphics{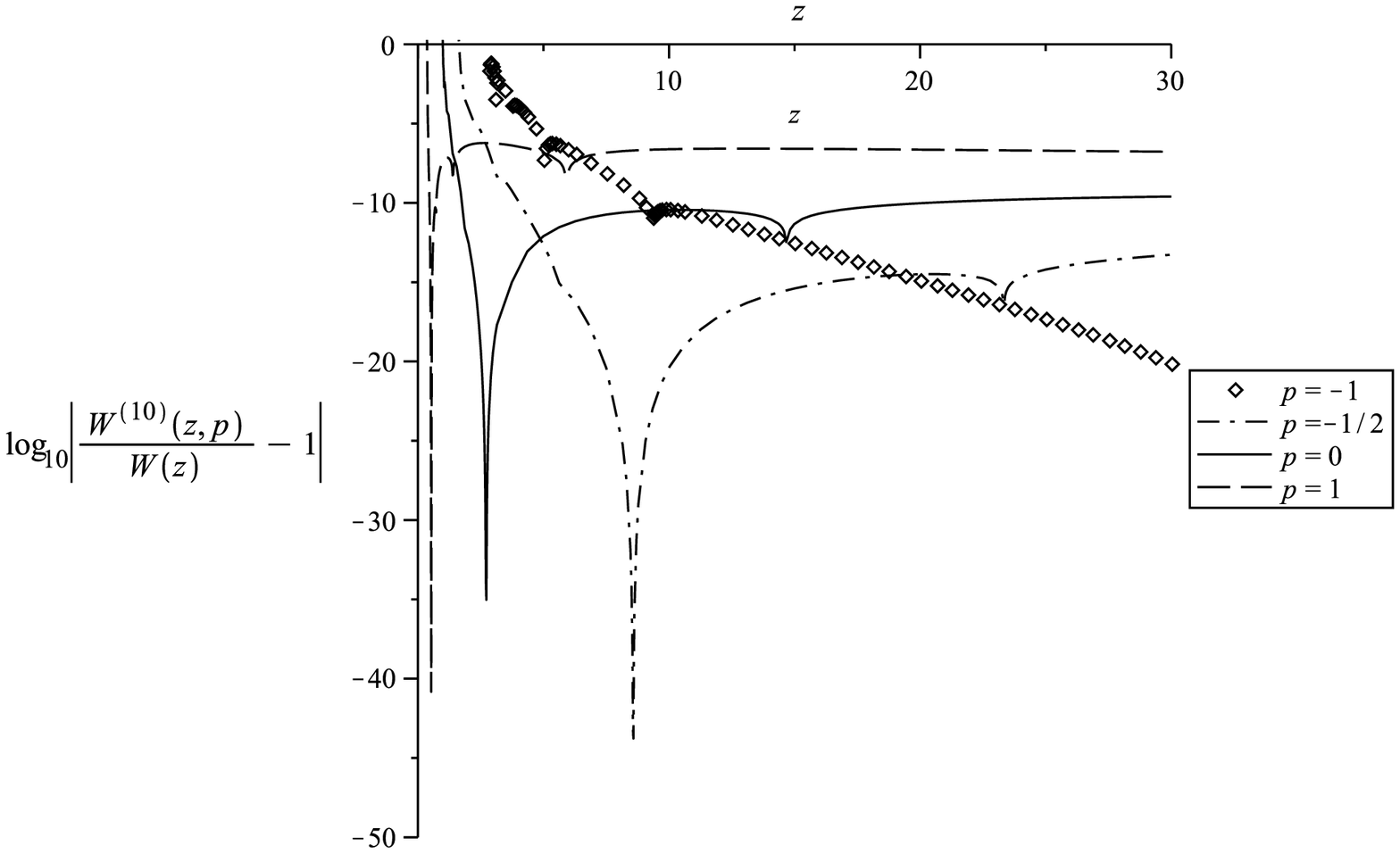}   }
  \end{center}
  \caption{For series (\ref{eq:improvedStir}) with $N=10$, changes in accuracy in $z$ for $p=-1,-1/2,0$ and $1$.}
  \label{fig:ImproveAccuracy}
 \end{figure}

The next two figures \ref{fig:Comtet Fixed z} and \ref{fig:Improved Fixed z}
display the dependence of convergence properties of the series (\ref{eq:Comtet}) and (\ref{eq:improvedStir}) respectively
on parameter $p$ for different numbers of terms $N=10,20$ and $40$.
Again, the curves in these figures confirm that the accuracy strongly depends on parameter $p$ and is non-monotone and
show that on the whole an increase of the number of terms improves the accuracy.
It is also interesting that there exists a value of $p$ for which the accuracy at the given point is maximum;
this value depends very slightly on $N$ and approximately is $p\approx-0.75$ in Figure \ref{fig:Comtet Fixed z}
and $p\approx-0.5$ in Figure \ref{fig:Improved Fixed z}.

 \begin{figure}
  \begin{center}
  \scalebox{0.4}{ \includegraphics{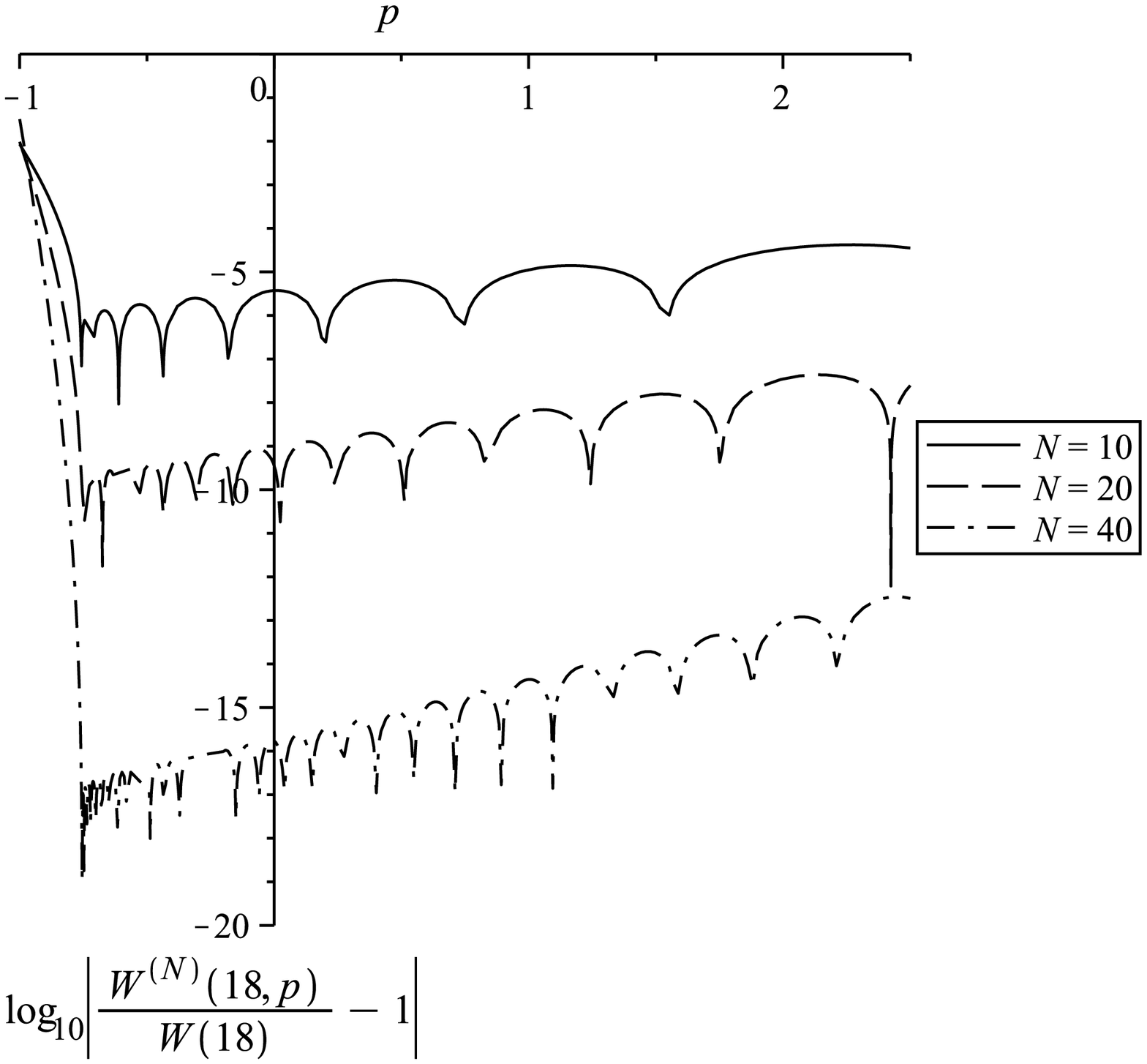}  }
  \end{center}
  \caption{For series (\ref{eq:Comtet}), the accuracy as a function of $p$ at fixed point $z=18$ for $N=10,20$ and $40$.}
  \label{fig:Comtet Fixed z}
 \end{figure}

 \begin{figure}
  \begin{center}
  \scalebox{0.4}{ \includegraphics{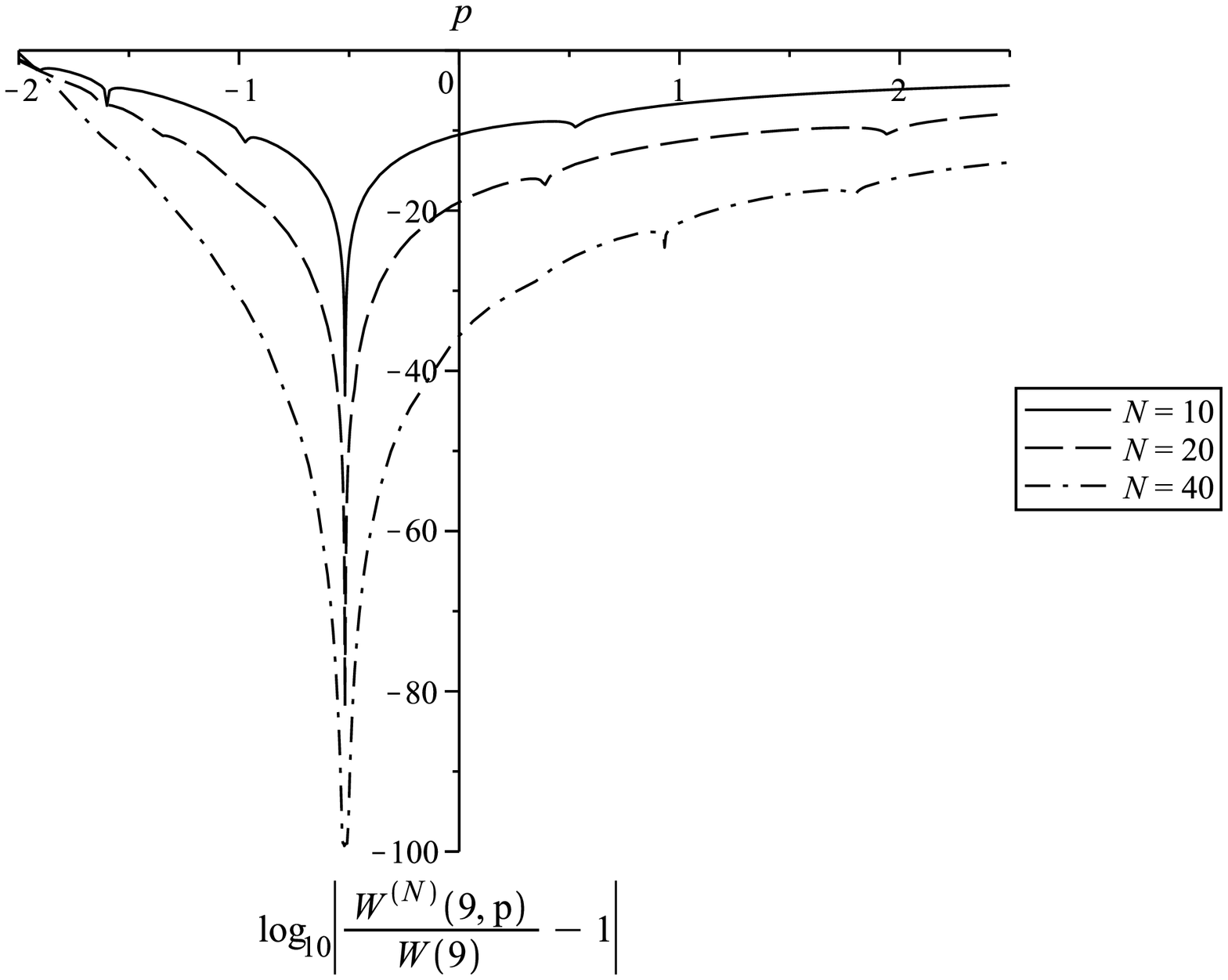}   }
  \end{center}
  \caption{For series (\ref{eq:improvedStir}), the accuracy as a function of $p$ at fixed point $z=9$ for $N=10,20$ and $40$.}
  \label{fig:Improved Fixed z}
 \end{figure}

The explained behaviour of the accuracy depending on parameter $p$ shows that introducing parameter $p$ in the series can
result in significant changes in accuracy.
The pointed out non-monotone effects of parameter $p$ on a rate of convergence can be due to the aforementioned
non-monotone behaviour of $\tau$.

\section{Branch $-1$ and complex $p$} \label{sec:minus1}
The above discussion has considered only real values for the parameter $p$.
We briefly shift our consideration to complex $p$ and to branch $-1$.
For $z$ in the domain $-1/e < z <0$ function $W_{-1}(z)$ takes real values
in the range $[-1,-\infty)$. The general asymptotic expansion takes the form \cite{Corless1996Lam}
\begin{equation}\label{eq:genminus1}
 W_{-1}(z) = \ln(z) - 2\pi i - \ln( \ln(z) - 2\pi i) + u \ ,
\end{equation}
where $u$ denotes the remaining terms similar to \eqref{eq:opening}.
This will clearly be very inefficient for $z\in [-1/e,0)$
because each term in the series will be complex, and yet the series must sum to a real number.
If, however, we utilize the parameter $p$, we can improve convergence enormously.

We again adopt the \emph{ansatz} used above to write
\begin{equation} \label{p:branch-1}
 W_{-1}(z) = [\ln_{-1} z+p] -[p+\ln (p+\ln_{-1} z)] +\frac{p+\ln(p+\ln_{-1} z)}{p+\ln_{-1} z}+v \ ,
\end{equation}
where $v$ stands for the remaining series which will not be pursued here.
By setting $p=i\pi$, we can rewrite $[\ln_{-1} z +i\pi]$ as $\ln(-z)$.
When $p=0$, \eqref{p:branch-1} is equivalent to \eqref{eq:genminus1}.
A numerical comparison of partial sums can be used to show the improvement.
Specifically, we compare the following first terms in \eqref{eq:genminus1} and
\eqref {p:branch-1}
\begin{eqnarray}
  W_{-1}^{(1)} &=& \ln(z) - 2\pi i - \ln( \ln(z) - 2\pi i) +\frac{\ln( \ln(z) - 2\pi i)}{\ln(z) - 2\pi i}
  \label{eq:oldm1} \ ,\\
  \hat W_{-1} &=& \ln(-z) - \ln(-\ln(-z)) + \frac{\ln(-\ln(-z))}{\ln(-z)}\ .
  \label{eq:betterm1}
\end{eqnarray}
The results are shown in Table \ref{tab:one} and graphically in Figure \ref{fig:Branchm1}.
One can see that both transformed ($p=i\pi$) and untransformed ($p=0$) series have an error that increases as $z\to -1/e$.
Although the maximum error occurs at $z=-1/e+0$, the transformed series is exactly correct at $z=-1/e$.
This is due to a difference in the local behaviour of $W_{-1}$ and approximation $\hat W_{-1}$ when $z\to -1/e +0$,
particularly, the former has a square-root singularity, while $\hat W_{-1}$ is regular there.
We also note that the transformed series is asymptotically correct as $z\to 0$.

\begin{table}
\begin{center}
\begin{tabular}{c|c|c|c}
$z$ & $W_{-1}(z)$ & $\hat W_{-1}(z)$ & $W_{-1}^{(1)}(z)$ \\   \hline
$-0.01$ &  $-6.4728$ & $-6.4640$ & $-6.3210-0.04815i$ \\
$-0.1$  & $-3.5772$ & $-3.4988$ & $-3.4124-0.3223i$ \\
$-0.2$ & $ -2.5426$ & $ -2.3810 $&$ -2.5182-0.5153i $ \\
$-0.3$ & $-1.7813$ & $ -1.5438 $&$ -2.0087-0.6621i $ \\
$-1/e$ & $-1$ &  $-1$& $ -1.7597-0.7450i$ \\
\end{tabular}
\end{center}
\caption{Numerical comparison of series transformation with $p=i\pi$. \label{tab:one}}
\end{table}
 \begin{figure}[ht]
  \begin{center}
  \scalebox{0.6}{ \includegraphics{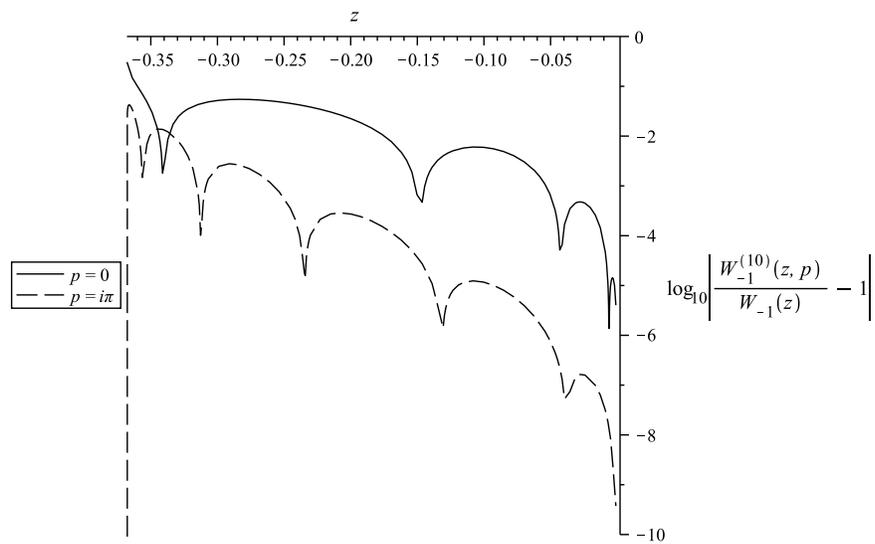}   }
  \end{center}
  \caption{Errors in approximations (\ref{eq:oldm1}) and (\ref{eq:betterm1}) for $W_{-1}$.}
  \label{fig:Branchm1}
 \end{figure}

\clearpage

\section{Series \eqref{eq:wEuler}}
\subsection{Different representations}
The series development \eqref{eq:wEuler} was obtained in \cite{Corless1997seqseries, Wright function}
and represents an expansion of $W(x)$ in powers of $\displaystyle\sigma^{-1}=\ln x$
\begin{equation} \label{Sum:lnx}
W(x)=\omega_0+\sum_{n=1}^\infty a_n(\ln x)^n
\end{equation}
or
\begin{equation} \label{Sum:t}
W(e^t)=\omega_0+\sum_{n=1}^\infty a_n t^n,
\end{equation}
where $t=\ln x$ and \cite{Wright function}
\begin{equation} \label{Coeff:Euler}
a_n=\displaystyle\frac{1}{n!(1+\omega_0)^{2n-1}}
\sum_{k=0}^{n-1}
\EulerianTwo{n-1}{k}(-1)^k \omega_0^{k+1}.
\end{equation}

The formula \eqref{Coeff:Euler} expresses the expansion coefficients $a_n$ in terms of the second-order Eulerian numbers \cite{Graham et. al,Corless1997seqseries}. We now show that these coefficients can also be represented through the unsigned associated Stirling numbers of the first kind $d(m,k)$ given by \cite{Comtet}
\begin{equation} \label{Def:d}
[\ln(1+v)-v]^k = k!\sum_{m=2k}^\infty (-1)^{m+k}\thinspace d(m,k)\frac{v^m}{m!}
\end{equation}
and the 2-associated Stirling numbers of the second kind used in the series \eqref{eq:improvedStir}.

Both representations can be obtained on the basis of a relation \cite{Unwinding}
\begin{equation}\label{OmegaRelation}
W(e^t)+\ln W(e^t) = t
\end{equation}
and the Lagrange Inversion Theorem \cite{Lagrange Th}.
To apply this theorem it is convenient to introduce a function that is zero at $t=0$. We consider function
\begin{equation}\label{Def:v}
v=v(t)=W(e^t)/\omega_0-1
\end{equation}
and write \eqref{OmegaRelation} as
$$
t=\omega_0\thinspace v+\ln (1+v).
$$
Then by the Lagrange Inversion Theorem we obtain
\begin{equation}\label{Sum:v}
v=\sum_{n=1}^\infty \frac{t^n}{n}[v^{n-1}]\left( \omega_0 + \frac{\ln (1+v)}{v} \right)^{-n}
\end{equation}
where the operator $[v^p]$ represents the coefficient of $v^p$ in a series expansion in $v$.
Comparing \eqref{Def:v},\eqref{Sum:t} and \eqref{Sum:v} leads to a formula for the coefficients $a_n$, which after applying the binomial theorem becomes
$$
a_n=\frac{\omega_0}{n(1+\omega_0)^n}[v^{n-1}] \sum_{k=0}^\infty (-1)^k
\dbinom {n-1+k}{n-1}
\frac{[\ln(1+v)-v]^k}{v^k(1+\omega_0)^k}
$$
or by \eqref{Def:d}
\begin{equation}\label{Coeff:d}
a_n=\frac{\omega_0}{n!} \sum_{k=0}^{n-1} \frac{ (-1)^{n+k-1}\thinspace d(n+k-1,k) }{ (1+\omega_0)^{n+k} }.
\end{equation}
If instead of function \eqref{Def:v} to take
\begin{equation}\label{Def:h}
h=h(t)=W(e^t)-\omega_0-t
\end{equation}
and apply the Lagrange Inversion Theorem to invert a relation
$$
t=\omega_0(e^{-h}-1)-h
$$
coming from \eqref{OmegaRelation}, then we find in a similar way
\begin{equation} \label{Coeff:2assoc}
a_n=\frac{1}{n!} \sum_{k=0}^{n-1}
\StirSubSet{n+k-1}{k}
\frac{ (-1)^{k+1}\omega_0^k } { (1+\omega_0)^{n+k} }.
\end{equation}

Finally, one more representation for the coefficients $a_n$ can be found in the following way. Let us consider a function
\begin{equation} \label{Def:psi}
\psi=\psi(t)=W(e^t)-t
\end{equation}
which is a simplified version of functions \eqref{Def:v} and \eqref{Def:h}: now one does not need to provide the zero function value at $t=0$ and here $\psi(0)=\omega_0$. Then it follows from \eqref{OmegaRelation} that
\begin{equation} \label{eq:tpsi}
t=e^{-\psi}-\psi.
\end{equation}
This equation can also be obtained from the fundamental relation (\ref{eq:basic}) by transformation $u=\psi+\ln t, \sigma=1/t, \tau=\ln t/t$ which follow from \eqref{eq:st}.

Differentiating \eqref{eq:tpsi} in $t$ and excluding the term $e^{-\psi}$ from the result again using \eqref{eq:tpsi} result in an initial value problem for ordinary differential equation
$$
\frac{d\psi}{dt}=-\frac{1}{1+t+\psi}.
$$
Searching a solution in the form of series
\begin{equation} \label{Sum:psi}
\psi(t)=\omega_0+\sum_{n=1}^\infty c_n t^n
\end{equation}
by substituting it into the differential equation and equating coefficients of the same power in $t$ one can finally find

\begin{subequations}\label{2eqs:ac}
\begin{equation} \label{Coeff:c}
c_1=-\frac{1}{1+\omega_0}, \hspace{3mm} c_n=-\frac{1}{n(1+\omega_0)}\left( (n-1)c_{n-1}+\sum_{k=1}^{n-1} k c_k c_{n-k} \right), n=2,3,4,\dots
\end{equation}
At length combining \eqref{Sum:psi},\eqref{Def:psi} and \eqref{Sum:t} gives
\begin{equation} \label{Coeff:ac}
a_1 = 1 + c_1 , \hspace{3mm}  a_n = c_n \mbox{ for } n \geq 2.
\end{equation}
\end{subequations}
In practice computing the expansion coefficients in \eqref{Sum:lnx} using recurrence \eqref{Coeff:c} is faster and takes less digits to obtain a required level of accuracy than using either of \eqref{Coeff:Euler}, \eqref{Coeff:d} or \eqref{Coeff:2assoc} which, however, being different representations of the same expansion coefficients, lead to some combinatorial relations considered in Section \ref{C-R}.

\subsection{Convergence properties} \label{Wrightfunction}
The expansion \eqref{eq:wEuler} in fact represents a series of the Wright $\omega$ function \cite{{Corless1997seqseries},{Wright function}} $\omega(z)=W_{\mathcal{K}(z)}(e^z)$, where $\mathcal{K}(z)=\left\lceil (\Im z-\pi)/(2\pi)\right\rceil$ is the unwinding number of $z$. The Wright $\omega$ function was introduced by Corless and Jeffrey \cite{Wright function} and studied in \cite{Wright,Wright function}. When $z \neq \xi \pm i\pi$ for $\xi\leq-1$, $\omega=\omega(z)$  satisfies equation $f(z,\omega)=0$ where $f(z,\omega)=\omega+\ln \omega-z$ (cf. \eqref{OmegaRelation}). Applying the same approach as in Section \ref{RealSigma} to this equation one can obtain the same results as in \cite{{Wright},{Corless1997seqseries},{Wright function}}. Specifically, the nearest to the origin singularities are \cite{Corless1997seqseries}
\begin{equation} \label{eqs:z1z2}
z_1=-1-i\pi \mbox{  and  } z_2=-1+i\pi.
\end{equation}
Note that they are connected with the singularities \eqref{singtau} of function $u=u_\sigma(\tau)$, defined by \eqref{eq:basic} or \eqref{eq:u(sigma,tau)}, through function \eqref{eq:s(sigma,tau)} (cf. Remark \ref{Remark5.2})
$$
z_1=s(\sigma,\tau_\ast^{(1)}) \mbox{  and  } z_2=s(\sigma,\tau_\ast^{(0)}).
$$
Thus the radius of convergence is $\sqrt{1+\pi^2}$ \cite{Corless1997seqseries} and the domain of convergence is defined by
\begin{equation} \label{ineq:sigma}
\left|\sigma\right|>\frac{1}{\sqrt{1+\pi^2}}.
\end{equation}
The estimation of $\omega$ in the vicinity of the singularities \eqref{eqs:z1z2} is \cite{{Wright},{Wright function}}
$$
\omega \sim -1 - \mbox{sgn}(\Im z_k)\sqrt{2z_k}\left(1-\frac{z}{z_k}\right)^\frac{1}{2} \mbox{ as } z\rightarrow z_{k}, \hspace{5mm} (k=1,2)
$$
As in Section \ref{AsymptCoeff}, using the Darboux's theorem one can find the asymptotic expression for the expansion coefficients in \eqref{Sum:lnx}
\begin{equation} \label{eq:asympt}
a_n \sim \sqrt{\frac{2}{\pi}} \frac{(-1)^n\sin \left(\displaystyle\frac{2n-1}{2}\arctan\pi\right)} {n^{\frac{3}{2}}(1+\pi^2)^{\frac{2n-1}{4}} }  \mbox{ as } n\rightarrow\infty .
\end{equation}
Thus, as in case of the series \eqref{eq:improvedStir} for positive $\sigma$ (see \eqref{asympt2}), the expansion coefficients in the series \eqref{eq:wEuler} disclose decaying oscillations in their behavior for large $n$.

In real case inequality \eqref{ineq:sigma} read as $\exp(-\sqrt{1+\pi^2})<x<\exp(\sqrt{1+\pi^2})$. Thus from the point of view of the domain of convergence the series \eqref{eq:wEuler} takes an intermediate place between the expansion of $W(x)$ at the origin \cite{Corless1996Lam} $W(x)=\sum_{n=1}^\infty (-n)^{n-1}x^n/n!$, which is valid for $-e^{-1}<x<e^{-1}$, and the series \eqref{eq:improvedStir} which is valid for $x>x_1=1.004458...$ (see Corollary \ref{cor:5.1}). These three expansions put together cover the entire region of definition of $W(x)$.

\section{Combinatorial consequences} \label{C-R}
In this section we collect some combinatorial consequences resulting from the above obtained expressions for the expansion coefficients.

Equating the right-hand sides of \eqref{dzetaStir2} and \eqref{dzetaEuler2} one can find

\begin{subequations} \label{CRident}
\begin{equation} \label{Euler2assoc:lambda}
\sum_{k=0}^{n}
\EulerianTwo{n}{k}(1+\lambda)^{n-k-1}\lambda^k
=
\sum_{k=1}^n
\StirSubSet{n+k}{k} \lambda^{k-1},
\end{equation}
where summation in the right-hand side starts with one as $\genfrac{\{}{\}}{0pt}{}{n}{0}_{\negthickspace\scriptscriptstyle\geq2}=0$ \cite{Graham et. al}.
Setting $\mu=\lambda/(1-\lambda)$ in \eqref{Euler2assoc:lambda} we also find
\begin{equation} \label{Euler2assoc:mu}
\sum_{k=0}^{n}
\EulerianTwo{n}{k} \mu^k
=
\sum_{k=1}^n
\StirSubSet{n+k}{k} \mu^{k-1} (1-\mu)^{m-k}.
\end{equation}
\end{subequations}

The identities \eqref{CRident} were obtained by L.M. Smiley in a different way in \cite{Smiley 1}, where notation
$\left\{\left\{\right\}\right\}$ was used instead of $\StirSubSet{}{}$, and referred to as the Carlitz-Riordan identities \cite{Smiley 2}.
Applying the binomial theorem to \eqref{CRident} leads to a pair of identities expressing the 2-associated Stirling numbers of the second kind through
the second-order Eulerian numbers and inversely \cite{Smiley 1}
$$
\StirSubSet{n+q}{q} = \sum_{p=0}^n \dbinom{n-p-1}{q-p-1}\EulerianTwo{n}{p}
$$
\vspace{2mm}
$$
\EulerianTwo{n}{q} = \sum_{p=0}^n (-1)^{q-p}\dbinom{n-p-1}{q-p}\StirSubSet{n+p+1}{p+1}
$$

Some estimates can also be obtained by comparing the found asymptotic formulas \eqref{asympt2} and \eqref{asympt3} with the explicit expressions for the expansion coefficients in \eqref{eq:improvedStir}. For example, taking estimate \eqref{asympt3} and the expansion coefficients in \eqref{eq:improvedStir} at $\sigma=-2$ we obtain
\begin{equation} \label{asympt:2assoc}
\sum_{p=1}^{m-1} \StirSubSet{p+m-1}{p} \sim
\frac{(m-1)!}{2\sqrt{\pi m} \left(2\ln2-1 \right)^{m-\frac{1}{2}}}  \hspace{5mm} \mbox{ as } \hspace{2mm} m\rightarrow\infty
\end{equation}
where the term with $p=0$ is skipped (cf. \eqref{Euler2assoc:lambda}. This result is consistent with the formula given in \cite[Ex.10(7), p.\,224]{Comtet}.

Another consequence is obtained by taking the expansion coefficients in \eqref{eq:improvedStir} at $\sigma=0$ together with \eqref{asympt4}
\begin{equation} \label{sum:2assoc}
\sum_{p=0}^{m-1}(-1)^{p+m-1}\StirSubSet{p+m-1}{p} = (m-1)!
\end{equation}

Further, comparing \eqref{Coeff:Euler}, \eqref{Coeff:d} and \eqref{Coeff:2assoc} between one another we come to the following three identities
\begin{subequations} \label{Euler-d-2assoc}
\begin{equation}
\displaystyle\frac{1}{(1+\omega_0)^{n-1} }
\sum_{k=0}^{n-1}\EulerianTwo{n-1}{k}
(-1)^k \omega_0^k = \sum_{k=0}^{n-1} \frac{ (-1)^{n+k-1}\thinspace d(n+k-1,k) }{ (1+\omega_0)^k }
\end{equation}
\par\medskip
\begin{equation}
\displaystyle \frac{1}{ (1+\omega_0)^{n-1} }
\sum_{k=0}^{n-1} \EulerianTwo{n-1}{k}(-1)^{k+1} \omega_0^{k+1}
=
\sum_{k=0}^{n-1}
\StirSubSet{n+k-1}{k}\frac{ (-1)^k \omega_0^k } { (1+\omega_0)^k }
\end{equation}
\par\medskip
\begin{equation}
\omega_0\sum_{k=0}^{n-1} \frac{ (-1)^{n+k}\thinspace d(n+k-1,k) }{ (1+\omega_0)^k }
=
\sum_{k=0}^{n-1}
\StirSubSet{n+k-1}{k}\frac{ (-1)^k \omega_0^{k} } {(1+\omega_0)^k}
\end{equation}
\end{subequations}
\\
Finally, combining either of \eqref{Coeff:Euler}, \eqref{Coeff:d} or \eqref{Coeff:2assoc} with \eqref{eq:asympt} gives an asymptotic expression for the sum involved there.

Thus studying expansion series for $W$ function we, on the way,  derived the Carlitz-Riordan identities \eqref{CRident} as well as found a formula for an alternating sum of 2-associated Stirling numbers of the second kind \eqref{sum:2assoc} and confirmed the asymptotic formula \eqref{asympt:2assoc} for summation of the same numbers without the alternating factor. We also found formulas \eqref{Euler-d-2assoc} where the Omega constant $\omega_0$ plays a role of a `magic' number which connects sums involving the second-order Eulerian numbers, the associated Stirling numbers of the first kind and the 2-associated Stirling numbers of the second kind.

\section{Concluding remarks}
We ascertained the domain of convergence of the series \eqref{eq:Comtet} and \eqref{eq:improvedStir} in real and complex cases and found that the series \eqref{eq:improvedStir} has a much wider domain of convergence than that of the series \eqref{eq:Comtet} in both cases and provided an analysis of this fact in real case. We found asymptotic expressions for the expansion coefficients and obtained a representation of the series \eqref{eq:improvedStir} in terms of the second-order Eulerian numbers.

We considered an invariant transformation defined by the parameter $p$ and applied it to the above series. We studied an effect of parameter $p$ on the convergence properties of the transformed series theoretically and numerically and found that an increase of $p$ results in an extension of the domain of convergence of the series. Thus the series obtained under the transformation with positive values of $p$ have a wider domain of convergence than the original series does. However, at the same time a rate of convergence can be found to be reduced when the parameter $p$
increases. Therefore in such a case within the common domain of convergence of the series with different positive values of $p$ the series with the minimum value of $p$ would be the most effective.

We also considered the well-known expansion of $W(x)$ in powers of $\ln x$  and gave an asymptotic estimate for the expansion coefficients. We found three more forms for a representation of the expansion coefficients of the series in terms of the associated Stirling numbers of the first kind (\ref{Coeff:d}), the 2-associated Stirling subset numbers (\ref{Coeff:2assoc}) and iterative formulas \eqref{2eqs:ac}. Finally we presented some combinatorial consequences, including the Carlitz-Riordan identities, which result from the found different forms of the expansion coefficients of the above series.



\begin{thebibliography}{99}

\bibitem{Abramowitz}
 M. Abramowitz and I.A. Stegan,
 \emph{Handbook of mathematical functions with formulas, graphs amd mathematical tables}, 9th printing, U.S. Department of Commerce, National Bureau of  Standards, Applied Mathematics 55, 1970, 24.1.3 (III), p. 824.

\bibitem{Adachi}
 K. Adachi,
 \emph{Several Complex Variables and Integral Formulas}, World Scientific, Singapore, 2007.

\bibitem{Antimirov et. al}
 M.Ya. Antimirov, A.A. Kolyshkin, and R. Vaillancourt,
 \emph{Complex Variables}, Academic Press, 1998.

\bibitem{Bender}
 E.A. Bender,
 \emph{Asymptotic methods in enumeration}, SIAM Review, 16, No.4, 1974, 485-515.

\bibitem{Bruijn}
 N.G. de Bruijn,
 \emph{Asymptotic Methods in Analysis}, North-Holland, 1961.

\bibitem{Lagrange Th}
 C. Carath\'eodory,
 \emph{Theory of Functions of a Complex Variable}, Chelsea, 1954.

\bibitem{Comtet}
 L. Comtet,
 \emph{}C. R. Acad. Sc., Paris, 270, 1970, 1085-1088.

\bibitem{Corless1996Lam}
 R.M. Corless, G.H. Gonnet, D.E.G. Hare, D.J. Jeffrey, and D.E. Knuth,
 \emph{On the Lambert W Function}, Advances in Computational Mathematics, Vol. 5, 1996, 329-359.

\bibitem{Wright function}
 R.M. Corless and D.J. Jeffrey,
 \emph{The Wright $w$ function}, AISC-Calculemus 2002, Eds: J. Calmet et al., LNAI 2385, Springer-Verlag, 2002, 76-89.

\bibitem{Corless1997seqseries}
 R.M. Corless, D.J. Jeffrey, and D.E. Knuth,
 \emph{A Sequence of Series for The Lambert W Function}, In \emph{Proceedings of the ACM ISSAC, Maui}, 1997, 195-203.

\bibitem{Graham et. al}
 R.L. Graham, D.E. Knuth, and O. Patashnik,
 \emph{Concrete Mathematics}, Addison-Wesley, 1994.

\bibitem{Hardy}
 G.H. Hardy,
 \emph{Divergent series}, Oxford University Press, London, 1949.

\bibitem{Henrici}
 P. Henrici,
 \emph{Applied and Computational Complex Analysis, Vol.1}, 1974, John Wiley \& Sons, New York.

\bibitem{Hunter}
 C. Hunter,
 \emph{Oscillations in the Coefficients of Power Series}, SIAM Journal on Applied Mathematics, Vol.47, No.3, 483-497.

\bibitem{Hunter&Guerrieri}
 C. Hunter and B. Guerrieri,
 \emph{Deducing the Properties of Singularities of Functions From Their Taylor Series Coefficients}, SIAM  J. Appl. Math.,
  Vol. 39, No. 2, 1980, 248-263, URL: http://www.jstor.org/stable/2101048.

\bibitem{JanssenLeeuwaarden}
 A.J.E.M. Janssen and J.S.H. van Leeuwaarden,
 \emph{Some remarks on $\Phi_\alpha(x)$}, Private communication, November, 2007.

\bibitem{CRpaper}
 D.J. Jeffrey, R.M. Corless, D.E.G. Hare, and D.E. Knuth,
 \emph{Sur l'inversion de $y^a e^y$ au moyen des nombres de {S}tirling associes}, C. R. Acad. Sc., Paris, 320, 1995, 1449-1452.

\bibitem{Unwinding}
 D.J. Jeffrey, D.E.G. Hare,  and R.M. Corless,
 \emph{Unwinding the branches of the Lambert W function}, Mathematical Scientist, 21, 1996, 1-7.

\bibitem{CASC2010}
 G.A. Kalugin and D.J. Jeffrey,
 \emph{Series Transformations to Improve and Extend Convergence}, CASC 2010, Eds. V.P.Gerdt et al., LNCS 6244, 2010, 134-147.

\bibitem{LimayeZeltser}
 B.V. Limaye and M. Zeltser,
 \emph{On the Pringsheim convergence of double series}, Proceedings of the Estonian Academy of Sciences,
 \textbf{58}, 2, 2009, 108-121.

\bibitem{Markushevich}
 A.I. Markushevich,
 \emph{Theory of functions of a complex variable}, Vol.II, Prentice-Hall, Inc., Englewood Cliffs, N.J., 1965.

\bibitem{Morse}
 P.M. Morse and H.Feshbach,
 \emph{Methods of theoretical physics}, Part I, McGraw-Hill Book Company, Inc., 1953.

\bibitem{SavickyWoods}
 P. Savick\'y and A.R. Woods,
 \emph{The number of Boolean functions computed by formulas of a given size}, Proceedings of the 8th   Int. Conf. \emph{Random Structures and Algorithms}, Vol. 13, Issue 3-4,1998, 349-382.

\bibitem{Smiley 1}
 L.M. Smiley,
 \emph{Completion of a Rational Function Sequence of Carlitz}, eprint\\
 arXiv:math/0006106, available on web site http://www.math.uaa.alaska.edu/\\
 $\sim$smiley/cscarx.pdf, 2000, 8 pages.

\bibitem{Smiley 2}
 L.M. Smiley,
 \emph{Carlitz-Riordan Identities}, Web site http://www.math.uaa.alaska.edu/\\
 $\sim$smiley/stir2eul2.html,Last modified: Wed Nov 21 03:23:23 AKST 2001.

\bibitem{Titchmarsh}
 E.C. Titchmarsh,
 \emph{Theory of functions}, 2nd ed., Oxford, 1939.

\bibitem{Wright}
 E.M. Wright,
 \emph{Solution of the equation $ze^z$=a}, Bull. Amer. Math. Soc., 65, 1959, 89-93.

\end{thebibliography}
\end{document}